\title{A monodromy criterion for existence of N\'eron models of abelian schemes in characteristic zero}
\titleformat{\subsubsection}[runin]{\bfseries}{\thesubsubsection}{0.5em}{}[.]
\titleformat{\subsection}[runin]{\bfseries}{\thesubsection}{0.5em}{}[.]
\newcommand{\note}[1]{\normalsize{{\color{red}\footnote{{\color{teal}(Giulio) #1}}}{\marginpar[{\color{teal}\hfill\tiny\thefootnote$\rightarrow$}]{{\color{teal}$\leftarrow$\tiny\thefootnote}}}}}
\newcommand{\Z}{\mathbb{Z}}
\newcommand{\R}{\mathbb{R}}
\newcommand{\Q}{\mathbb{Q}}
\newcommand{\C}{\mathbb{C}}
\newcommand{\wt}{\widetilde}
\renewcommand{\O}{\mathcal{O}}
\newcommand{\m}{\mathfrak{m}}
\newcommand{\ord}{\mathrm{ord}}
\newcommand{\bmt}{\begin{pmatrix}}
\newcommand{\emt}{\end{pmatrix}}
\newcommand{\bsm}{\left(\begin{smallmatrix}}
\newcommand{\esm}{\end{smallmatrix}\right)}
\newcommand{\til}{\widetilde}
\DeclareMathOperator{\Aut}{Aut}
\DeclareMathOperator{\End}{End}
\DeclareMathOperator{\Gal}{Gal}
\DeclareMathOperator{\Hom}{Hom}
\DeclareMathOperator{\rk}{rk}
\DeclareMathOperator{\id}{id}
\DeclareMathOperator{\im}{im}
\DeclareMathOperator{\Spec}{Spec}
\DeclareMathOperator{\colim}{colim}
\DeclareMathOperator{\coker}{coker}
\DeclareMathOperator{\Sets}{\mathbf{Sets}}
\DeclareMathOperator{\Sch}{\mathbf{Sch}}
\DeclareMathOperator{\Frac}{Frac}
\DeclareMathOperator{\Pic}{Pic}
\DeclareMathOperator{\divv}{div}
\DeclareMathOperator{\codim}{codim}
\DeclareMathOperator{\into}{\hookrightarrow}
\DeclareMathOperator{\htt}{ht}
\newcommand{\ra}{\rightarrow}
\theoremstyle{definition}
\newtheorem{definition}{Definition}[section]
\newtheorem{remark}[definition]{Remark}
\newtheorem{example}[definition]{Example}
\newtheorem{situation}[definition]{Situation}
\theoremstyle{plain}% default
\newtheorem{proposition}[definition]{Proposition}
\newtheorem{lemma}[definition]{Lemma}
\newtheorem{theorem}[definition]{Theorem}
\newtheorem{corollary}[definition]{Corollary}
\newtheorem{claim}[definition]{Claim}
\theoremstyle{remark}
\renewcommand{\phi}{\varphi}
\author{Giulio Orecchia \footnote{Supported by the Centre Henri Lebesgue, programme ANR-11-LABX-0020-01} \\ email: \href{mailto:giulio.orecchia@univ-rennes1.fr}{giulio.orecchia@univ-rennes1.fr} \\
Universit\'e Rennes 1}
\newcounter{nootje}
\newcommand{\beq}{\begin{equation}}
\newcommand{\eeq}{\end{equation}}
\newcommand{\beqs}{\begin{equation*}}
\newcommand{\eeqs}{\end{equation*}}
\begin{document}
\maketitle

\begin{abstract}
We consider the problem of existence of N\'eron models for a family of abelian varieties over a base of dimension greater than 1. We show that when $S$ is of equicharacteristic zero, the condition of toric additivity introduced in \citep{Orecchia} is sufficient for the existence of a N\'eron model, and also necessary under some extra assumptions. Furthermore, we give an equivalent formulation of toric additivity in terms of monodromy action on the $l$-adic Tate module.
\end{abstract}
\tableofcontents

\cleardoublepage
\pagenumbering{arabic}

\section*{Introduction}\label{section_intro1} 
\subsection*{The problem of existence of N\'eron models}
A well known result in arithmetic geometry states that every abelian variety $A$ defined over the fraction field $K$ of the spectrum $S=\Spec R$ of a Dedekind domain, admits a canonical, smooth, separated model $\mathcal N/S$, with $\mathcal N\times_SK=A$, such that every $K$-point of $A$ extends uniquely to an $S$-valued point of $\mathcal N$. Such a model $\mathcal N/S$ is called a \textit{N\'eron model} for $A$ over $S$. What makes N\'eron models particularly significant is that they inherit a unique structure of $S$-group scheme from $A$; and the fact that they give a meaning to the notion of reduction of a $K$-point of $A$ modulo a maximal ideal of $R$.

One natural question that arises is whether the theory of N\'eron models carries over when one replaces the Dedekind domain $R$ by a higher dimensional base. One first difficulty that appears, is that it is not at all clear whether N\'eron models exist in this more general setting.

The question of existence of N\'eron models has first been raised for jacobians of nodal curves in \citep{holmes}, where the author gives it a negative answer, by providing a necessary and sufficient criterion for the jacobian of a nodal family $\mathcal C/S$ of curves to admit a N\'eron model. The condition of alignment involves a rather restrictive combinatorial condition on the dual graphs of the geometric fibres of $\mathcal C/S$, endowed with a certain labelling of the edges. However, the criterion applies only when the total space $\mathcal C$ is regular.

In \citep{Orecchia}, a new criterion for existence of Neron models of jacobians was introduced, called \textit{toric additivity}. The criterion involves no condition of regularity of the total space; moreover, toric additivity is a property of the zero-component of the Picard scheme, $\Pic^0_{\mathcal C/S}$, therefore it has good base change properties, see \citep[2.3]{Orecchia}.

In fact, the zero-component $\Pic^0_{\mathcal C/S}$ is a semiabelian scheme; it seems natural to ask whether the results of \citep{Orecchia} apply more in general to abelian schemes $A/U$ admitting a semiabelian model $\mathcal A/S$; in this article we give a positive answer to the question under some extra assumptions.

\subsection*{Results}
We work over a regular base $S$, and we assume we are given an abelian scheme $A$ over an open $U\subset S$, such that $S\setminus U$ is a normal crossing divisor; we also assume that $A$ has semiabelian reduction over $S$. We call $\mathcal A/S$ the unique semiabelian scheme extending $A$. 

Over the strict henselization $S^{sh}$ at a geometric point $s$ of $S$, we define a certain \textit{purity map} (\ref{def_purity_map}) between character groups of the maximal tori contained in the fibres of $\mathcal A_{S^{sh}}$. Inspired by \citep[2.6]{Orecchia}, we say that $\mathcal A/S$ is \textit{toric additive} at $s$ if the purity map is an isomorphism.

Here is the main result of this article:
\begin{theorem}[\ref{TA->Neron}]\label{intro:mainthm}
Assume $S$ is a regular, locally noetherian $\mathbb Q$-scheme. Let $\mathcal A/S$ be a semiabelian scheme, restricting to an abelian scheme over the complement $U=S\setminus D$ of a normal crossing divisor. If $\mathcal A/S$ is toric additive at all geometric points of $S$, there exists a N\'eron model $\mathcal N/S$ for $A$ over $S$.
\end{theorem}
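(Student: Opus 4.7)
Since Néron models, when they exist, are unique and descend along étale covers, it suffices to produce one étale-locally at each geometric point $\bar s$ of $S$. I would reduce to the case $S = \Spec \mathcal{O}_{S,\bar s}^{sh}$ strictly henselian, regular, local, with $D$ cut out by $x_1\cdots x_k = 0$ for part of a regular system of parameters at $\bar s$. Toric additivity at $\bar s$, via the purity map of \ref{def_purity_map}, then asserts that the character group of the maximal torus in the fibre $\mathcal A_{\bar s}$ is identified with the direct sum of the character groups of the maximal tori in the fibres of $\mathcal A$ above the generic points of the branches $V(x_i)\subset D$.

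\textbf{Trivialising monodromy and uniformisation.} In characteristic zero, one can pass to a finite étale Galois cover $U'\to U$ which trivialises the part of the $\ell$-adic monodromy responsible for the toric part; this is precisely the point of the monodromy reformulation of toric additivity advertised in the abstract. Over $U'$ the abelian scheme $A_{U'}$ then admits a Raynaud-style presentation $\widetilde{\mathcal A}'\to A_{U'}$ as a quotient by an étale sheaf of lattices $Y$, where $\widetilde{\mathcal A}'$ is an extension of an abelian scheme by a split torus with \emph{constant} character group.

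\textbf{Extending $Y$ and constructing $\mathcal N$.} The decisive use of toric additivity is that the lattice $Y$, a priori defined only on $U'\subset S'$ (with $S'$ the normalisation of $S$ in $U'$), extends uniquely to an étale locally constant sheaf on all of $S'$. Extension across each individual branch of the boundary is automatic, because the branch monodromy was killed by $U'\to U$; extension across deeper strata where branches cross is precisely the compatibility forced by the purity map being an isomorphism. Granted this extension, set $\mathcal N' := \widetilde{\mathcal N}'/Y$, where $\widetilde{\mathcal N}'$ is a smooth model of $\widetilde{\mathcal A}'$ (whose construction is elementary, the toric part being globally split). Smoothness and the Néron mapping property for $\mathcal N'$ then reduce, via freeness of the $Y$-action on the étale site, to the same property for $\widetilde{\mathcal N}'$. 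Finally, descend $\mathcal N'$ along the Galois cover $S'\to S$: the descent datum is supplied by Galois-equivariance of the extended $Y$, which is itself forced by toric additivity of $\mathcal A/S$ (as opposed to merely of $\mathcal A/S'$), and then glue over $\bar s$ to obtain $\mathcal N/S$.

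\textbf{Main obstacle.} The heart of the argument is the middle step: translating a codimension-two local statement (the purity map at $\bar s$) into global extendability of $Y$ across the crossings of $D$. I expect this to require a careful induction on the codimension of the boundary stratum, together with a comparison between the purity map for $\mathcal A/S^{sh}$ and the purity maps for $\mathcal A$ restricted to appropriate sub-strata. This is where both the regularity of $S$ and the $\mathbb Q$-scheme hypothesis play essential roles, the latter through the availability of finite étale covers trivialising unipotent monodromy.
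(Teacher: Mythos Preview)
Your proposal rests on a step that cannot work: you claim that a finite \'etale cover $U'\to U$ can trivialise ``the part of the $\ell$-adic monodromy responsible for the toric part''. But the action of $\pi_1^t(U)$ on $T_\ell A(K^s)$ is \emph{unipotent} of level~$2$ (see \S\ref{subsub_unipotent}): each generator $\sigma_i$ satisfies $(\sigma_i-1)^2=0$, whence $\sigma_i^m = 1 + m(\sigma_i-1)$ is nontrivial for every $m\geq 1$ as soon as $\sigma_i\neq 1$. A nontrivial unipotent automorphism of a free $\mathbb Z_\ell$-module has infinite order, so no finite cover kills it; the toric monodromy is trivial precisely when the toric rank is zero, i.e.\ when $A$ already has good reduction. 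Consequently your Raynaud-style presentation of $A_{U'}$ never materialises --- and in any case such uniformisation is a formal or rigid-analytic construction describing the degeneration at the boundary, not an algebraic structure visible on the abelian locus $U'$. The role you assign to the $\mathbb Q$-hypothesis (``availability of finite \'etale covers trivialising unipotent monodromy'') is therefore exactly backwards.

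The paper proceeds quite differently. It first builds, \'etale-locally, an explicit smooth group scheme $\mathcal N=(\mathcal A\times_S\underline\Psi)/\mathcal H$, where $\Psi=\bigoplus_i\Phi_i$ is the direct sum of the N\'eron component groups at the generic points of the $D_i$; toric additivity is what allows one to realise $\Psi$ inside $X'\otimes\mathbb Q/\mathbb Z$ and to match it with the component group along any transversal trait. This $\mathcal N$ is shown to be a \emph{test-N\'eron model} (\cref{main_thm}), unique up to unique isomorphism (\cref{test-NM_unique}), hence glueable. The hard part is then proving that a test-N\'eron model is an actual N\'eron model (\cref{test=Neron}): in dimension~$2$ one blows up the closed point, and for a trait $Z\to S$ hitting $D$ with multiplicities $(m,n)$ one passes to the cover $x^m=u,\ y^n=v$ to render it transversal. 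The characteristic-zero hypothesis enters precisely here --- not to trivialise monodromy, but to guarantee that this cover is \emph{tame}, so that Edixhoven's identification $\mathcal N=(\pi_*\mathcal M)^G$ (\cref{weil}) applies and lets one pull sections back down.
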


We introduce the notion of \textit{test-N\'eron model}; it is a smooth model $\mathcal M$ of $A$, with a structure of group-space, and with the property:  for every morphism $Z\to S$, with $Z$ the spectrum of a DVR, hitting transversally the boundary divisor $D$, the pullback $\mathcal M_Z/Z$ is a N\'eron model of its generic fibre. This notion allows us to give a partial converse to \cref{intro:mainthm}:
\begin{theorem}[\ref{thm:converse}]\label{intro:converse}
Assume $A/U$ admits a N\'eron model $\mathcal N/S$, such that $\mathcal N/S$ is also a test-N\'eron model for $A$ over $S$. Then $A/U$ is toric additive.
\end{theorem}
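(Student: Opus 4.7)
The plan is to argue by contrapositive: suppose $A/U$ fails to be toric additive at some geometric point $s$ of $S$, and derive a contradiction with the assumption that $\mathcal N/S$ is simultaneously a N\'eron model and a test-N\'eron model for $A$ over $S$. Both properties are \'etale-local on the base, so I first pass to the strict henselization $S^{sh}$ of $S$ at $s$. After this reduction, $S$ is strictly henselian with closed point $s$, the divisor $D$ is a union of smooth branches $D_1,\dots,D_r$ meeting transversally at $s$, and the semiabelian extension $\mathcal A/S$ of $A$ is at hand. Since $\mathcal N/S$ is a N\'eron model, its identity component $\mathcal N^0$ must coincide with $\mathcal A$ by uniqueness of the semiabelian extension, so in particular the toric ranks at the various fibres of $\mathcal N$ are read off from the character groups of the maximal tori of $\mathcal A$.

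Next, I translate the failure of toric additivity into concrete numerical data carried by these character groups. Writing $X_s$ for the character group of the maximal torus in $\mathcal A_s$ and $X_{\eta_i}$ for the analogous groups at the generic points $\eta_i$ of $D_i$, the purity map compares $X_s$ with $\bigoplus_i X_{\eta_i}$; non-isomorphism means either a nontrivial kernel (edge torus directions that disappear at $s$) or a nontrivial cokernel (torus directions at $s$ not visible from any single branch).

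The heart of the argument is to use the test-N\'eron hypothesis by pulling $\mathcal N/S$ back along carefully chosen morphisms $\varphi\colon Z \to S$, with $Z$ the spectrum of a DVR hitting $D$ transversally. By varying $\varphi$, I can arrange that $Z$ meets any prescribed subset of the branches $D_i$ at $s$ with prescribed multiplicities. The true N\'eron model of $A_Z/Z$ is classical, and its identity component and component group are computed from the character groups $X_{\eta_i}$ for those $i$ met by $Z$, together with the monodromy pairing inherited from the corresponding branches. On the other hand, since $\mathcal N/S$ is assumed to be a test-N\'eron model, $\mathcal N_Z/Z$ is exactly this N\'eron model; but $\mathcal N_Z$ is also controlled by the structure at $s$, where the toric part of $\mathcal N^0_s = \mathcal A_s$ has character group $X_s$. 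Matching these two descriptions along all admissible $\varphi$ expresses $X_s$ as $\bigoplus_i X_{\eta_i}$ via the purity map, yielding toric additivity at $s$.

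The step I expect to require the most care, and which I regard as the main obstacle, is refining this matching from a numerical rank equality to an honest isomorphism of character groups. Varying $Z$ through a single branch isolates each summand $X_{\eta_i}$ and establishes injectivity one factor at a time, while varying $Z$ through all branches simultaneously and invoking Grothendieck's formula (\citep{SGA7}) for the component group of the N\'eron model over a DVR in terms of the monodromy pairing pins down surjectivity: a nonzero cokernel or kernel of the purity map would produce a mismatch between the component group of the pullback $\mathcal N_Z$ and that of the genuine N\'eron model of $A_Z$, contradicting the test-N\'eron property. Assembling these compatibilities coherently is where the freedom afforded by the abundance of transversal DVRs through $s$ is decisive.
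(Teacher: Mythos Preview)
Your setup is right, and you have correctly isolated the mechanism: on the strictly henselian base, the test-N\'eron hypothesis forces the component group of $\mathcal N_s$ to agree with the component group of the N\'eron model over every transversal trait $Z$ through $s$, and the latter is computed from the monodromy pairings via Grothendieck's formula. But two points in your plan are off, and the main one is a genuine gap.

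First, a small correction: the purity map $X_s\to\bigoplus_i X_{\eta_i}$ is \emph{always} injective (this is \cref{lemma:purity_injective}), so there is no kernel to worry about; the only issue is surjectivity. Also, ``prescribed multiplicities'' is not available to you: by definition a transversal trait has all multiplicities $0$ or $1$, and by \cref{prop:monodromy_transversal} the monodromy pairing over $Z$ depends only on the set of branches $Z$ meets. So varying $Z$ among transversal traits through all branches gives you \emph{one} piece of information, not a family.

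The real gap is the step you flag yourself. Writing $A$ for the purity matrix and $\Psi$ for the block-diagonal of monodromy pairings, the component-group comparison yields exactly the equation $\coker(A^t\Psi A)=\coker(A^t\Psi)$, equivalently $\im(A^t\Psi A)=\im(A^t\Psi)$. From this one extracts a retraction $\theta$ with $\theta A=\id_X$, but that alone does \emph{not} force $A$ to be an isomorphism: you only know $A$ is split injective, which was already clear. The paper's proof closes this by a nontrivial linear-algebra argument that genuinely uses the \emph{positive definiteness} of the monodromy pairing (\cref{thm:monodromy_pairing}), hence requires the principal-polarization reduction via Zarhin's trick. One shows, over $\mathbb R$, that the two projectors $\chi_1=\theta_1 P$ and $\chi_2=\theta_2 Q$ are symmetric with eigenvalues in $[0,1]$; integrality of the characteristic polynomial then forces all eigenvalues to be $0$ or $1$, so $\chi_1,\chi_2$ are idempotent with $\chi_1\chi_2=0$, giving $X=\ker P\oplus\ker Q$ and hence bijectivity of $A$. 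Your proposal has no analogue of this step, and ``varying $Z$'' cannot substitute for it, since all transversal traits through the full set of branches yield the same equation.
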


It is unknown to the author whether every N\'eron model is automatically a test-N\'eron model. In an forthcoming article, we show that this is true for N\'eron models of jacobians.

\Cref{intro:mainthm,intro:converse} partially generalize the previously known result from \cite{Orecchia}:
\begin{theorem}[\citep{Orecchia}, 4.13]
Let $S$ be a regular, locally noetherian, excellent scheme. Let $\mathcal C/S$ be a nodal curve, smooth over the complement $U=S\setminus D$ of a normal crossing divisor. The jacobian $\Pic^0_{\mathcal C_U/U}$ is toric additive if and only if it admits a N\'eron model over $S$.
\end{theorem}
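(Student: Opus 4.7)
The plan is to combine the semiabelian-scheme techniques of this paper with a dictionary between the purity-map formulation of toric additivity used here and the combinatorial condition on dual graphs employed in \citep{Orecchia}, together with the (forthcoming) fact that N\'eron models of jacobians are test-N\'eron models.

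\textbf{Dictionary step.} For a nodal curve $\mathcal C/S$ smooth over $U$, the identity component $\mathcal A := \Pic^0_{\mathcal C/S}$ of the relative Picard functor is a semiabelian $S$-scheme extending $A := \Pic^0_{\mathcal C_U/U}$, whose toric part at a geometric point $s$ has character group canonically identified with $\H_1(\Gamma_s,\Z)$, where $\Gamma_s$ is the dual graph of the fibre. I would check that under this identification the purity map of Definition \ref{def_purity_map} coincides with the specialization map of graph homologies induced by edge contractions as one passes between geometric points of $S$; consequently its bijectivity is equivalent to the combinatorial toric-additivity condition of \citep{Orecchia}.

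\textbf{Both implications.} For ``toric additive $\Rightarrow$ N\'eron model exists'', apply Theorem \ref{intro:mainthm} to $\mathcal A/S$ in equicharacteristic zero; in arbitrary characteristic one instead uses a Raynaud-style construction of $\mathcal N$ as an \'etale-local quotient of $\Pic^0_{\mathcal C/S}$ by a sheaf of vertical divisors (line bundles supported on components of $\mathcal C$ lying over strata of $D$), where toric additivity is precisely the condition that makes this quotient into a smooth separated group scheme with the N\'eron mapping property. For the converse, apply Theorem \ref{intro:converse} after invoking the forthcoming result that N\'eron models of jacobians are test-N\'eron models, then pull back to a DVR transversal to $D$ through a geometric point of $D$, where toric additivity becomes visible from the classical one-dimensional theory of N\'eron models in terms of the monodromy action on the Tate module.

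\textbf{Main obstacle.} The crux is the test-N\'eron property for jacobians: Theorem \ref{intro:converse} is conditional on it, and its proof appears to require an explicit Picard-theoretic description of $\mathcal N$ that commutes with base change along DVR traces, which is why the author defers it to a separate article. Removing the $\Q$-scheme hypothesis from the forward direction is comparatively mild and is handled by Raynaud's construction specific to jacobians, which works in arbitrary characteristic; the $l$-adic monodromy reformulation announced in the abstract is not needed here, since for nodal curves the character group is already combinatorially described by $\H_1(\Gamma,\Z)$.
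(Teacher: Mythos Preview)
This statement is not proved in the present paper at all: it is quoted from the earlier article \citep{Orecchia} as prior work which the current paper partially generalizes. There is therefore no proof here to compare against, and your proposal should be read as an attempt to re-derive \citep[4.13]{Orecchia} from the results of the present paper.

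That attempt has genuine gaps, which you yourself flag. For the forward implication, Theorem~\ref{intro:mainthm} applies only over a $\mathbb Q$-scheme, whereas the cited theorem holds over any regular excellent base; your fallback ``Raynaud-style construction'' for arbitrary characteristic is not a consequence of anything in this paper --- it \emph{is} the substance of the original proof in \citep{Orecchia}, which works directly with line bundles on nodal curves and the combinatorics of dual graphs rather than through the abstract test-N\'eron machinery. For the converse, Theorem~\ref{intro:converse} is conditional on the N\'eron model being a test-N\'eron model, and you invoke the ``forthcoming result'' that this holds for jacobians; but the present paper explicitly states that this is only announced, not proved. So your argument for the converse is not self-contained and rests on work not available in either paper.

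In short, your proposal is a plausible outline of how the logical dependencies \emph{might} eventually be reversed once the forthcoming paper appears, but as written it is circular (using generalizations of a theorem to prove the theorem) and incomplete (relying on unproved announcements). The actual proof in \citep{Orecchia} is independent of the machinery here: it exploits the explicit description of $\Pic^0_{\mathcal C/S}$ and its torsors in terms of the dual graph, which is why the excellence hypothesis suffices and no characteristic restriction is needed.
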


\subsection*{Toric additivity in terms of monodromy action}
Another result of this article is that when working on a strictly local base $S$, toric additivity can be expressed in terms of the monodromy action on the Tate module of the generic fibre $A_K/K$. For a prime $l$ different from the residue characteristic $p\geq 0$, the Galois action on $T_lA(K^{sep})$ factors via the \'etale fundamental group $\pi_1(U)$, and in fact via its biggest pro-$l$ quotient, which is isomorphic to a product $G=\prod_{i=1}^n I_i$, with each $I_i=\mathbb Z_l(1)$; the indexes $i=1,\ldots,n$ correspond to the components $D_1,\ldots,D_n$ of the divisor $D=S\setminus U$.

We say that $\mathcal A/S$ is $l$-toric additive if the $G$-module $T_lA(K^{sep})/T_lA(K^{sep})^G$ decomposes into a direct sum $\bigoplus_{i=1}^n V_i$ of $G$-invariant subgroups, such that for every $i\neq j$, the subgroup $I_i\subset G$ acts as the identity on $V_j$. Roughly, one could interpret $l$-toric additivity as the fact that the groups $I_i$ act ``independently'' on $T_lA(K^{sep})$.

By itself, $l$-toric additivity is a weaker condition than toric additivity. In characteristic zero, toric additivity is equivalent to $l$-toric additivity being satisfied for all primes $l$. However, we show that in the particular case of jacobians of nodal curves, something stronger holds:
\begin{theorem}[\ref{coro:curves_l}]
Let $\mathcal A=\mathcal J_{\mathcal C/S}$ be the jacobian of a nodal curve $\mathcal C/S$, smooth over $U=S\setminus D$. The following are equivalent:
\begin{itemize}
\item[i)] $\mathcal J_{\mathcal C/S}$ is toric additive;
\item[ii)] for some prime $l\neq p$, $\mathcal J_{\mathcal C_S/S}$ is $l$-toric additive;
\item[iii)] for every prime $l\neq p$, $\mathcal J_{\mathcal C_S/S}$ is $l$-toric additive.
\end{itemize}
\end{theorem}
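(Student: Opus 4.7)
The implications $\mathrm{(i)} \Rightarrow \mathrm{(iii)} \Rightarrow \mathrm{(ii)}$ are formal: the first is an instance of the general result stated earlier in the introduction (toric additivity implies $l$-toric additivity for every $l \ne p$), and the second is tautological. The content of the theorem is the converse $\mathrm{(ii)} \Rightarrow \mathrm{(i)}$, and my plan is to reduce both toric additivity and $l$-toric additivity (for any single prime $l \ne p$) to the same combinatorial condition on the labelled dual graph $\Gamma$ of $\mathcal{C}_{\bar s}$. The edges of $\Gamma$ are partitioned according to which component $D_i$ of $D$ the corresponding node smooths along; write $\Gamma_i \subseteq \Gamma$ for the subgraph consisting of the edges of label $i$.

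For a semistable jacobian, the character group of the toric part of $\mathcal{J}_{\bar s}$ is canonically $H_1(\Gamma, \mathbb{Z})$, and the character groups of the intermediate semiabelian fibres arising in the definition of the purity map are the homologies of the corresponding edge-contractions of $\Gamma$. Unpacking the purity map of \ref{def_purity_map} under these identifications exhibits it as the natural sum map
\[
\Phi_{\mathbb{Z}} \,:\, \bigoplus_{i=1}^n H_1(\Gamma_i, \mathbb{Z}) \longrightarrow H_1(\Gamma, \mathbb{Z}).
\]
Thus $\mathcal{J}_{\mathcal{C}/S}$ is toric additive at $\bar s$ iff $\Phi_{\mathbb{Z}}$ is an isomorphism.

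On the monodromy side, the Picard--Lefschetz formula for semistable curves describes the action of each $\sigma_i \in I_i$ on $T_l J(K^{\sep})$ explicitly in terms of vanishing cycles supported on the edges of $\Gamma_i$. Combined with the identification $T_l J / (T_l J)^G \simeq H_1(\Gamma, \mathbb{Z}_l)$, this lets one read off that the subspace of $H_1(\Gamma, \mathbb{Z}_l)$ killed by $\sigma_j - 1$ for every $j \ne i$ is precisely $H_1(\Gamma_i, \mathbb{Z}_l)$. A direct-summand argument then shows that a decomposition satisfying the hypotheses of $l$-toric additivity exists iff $\Phi_{\mathbb{Z}} \otimes \mathbb{Z}_l$ is an isomorphism.

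The remaining step, and the one I expect to demand the most care, is descent from $\mathbb{Z}_l$ to $\mathbb{Z}$ for $\Phi_{\mathbb{Z}}$. For graphs two combinatorial facts come to the rescue: each $H_1(\Gamma_i, \mathbb{Z})$ is saturated in $H_1(\Gamma, \mathbb{Z})$ (a cycle that becomes supported on label-$i$ edges after multiplication by a nonzero integer was already supported on those edges), and the sum $\sum_i H_1(\Gamma_i, \mathbb{Z})$ is automatically direct because its summands are concentrated on disjoint edge-sets. Together these imply that the cokernel of $\Phi_{\mathbb{Z}}$ is a finitely generated torsion-free, hence free, abelian group; and such a group vanishes iff it vanishes after tensoring with $\mathbb{Z}_l$ for any single $l$. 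Hence $\Phi_{\mathbb{Z}} \otimes \mathbb{Z}_l$ being an isomorphism propagates back to $\Phi_{\mathbb{Z}}$, closing the loop.
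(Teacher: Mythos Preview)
Your overall strategy coincides with the paper's: the corollary is deduced from the single fact that for jacobians the cokernel of the purity map is torsion-free (this is \cite[2.4]{Orecchia}, quoted here as \cref{TA_and_weak_curves}), after which \cref{weak_and_l} and \cref{A=B} finish the job. However, you have misidentified the purity map. By definition (\cref{def_purity_map}) it goes from the character group at the closed point to the direct sum of character groups at the generic points $\zeta_i$; in graph terms this is the inclusion
\[
H_1(\Gamma,\Z)\;\hookrightarrow\;\bigoplus_{i=1}^n H_1\bigl(\Gamma/E_{\neq i},\Z\bigr),
\]
where $\Gamma/E_{\neq i}$ is the \emph{contraction} of all edges of label $\neq i$ (you even say ``edge-contractions'' yourself), not the subgraph $\Gamma_i$. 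Your ``sum map'' $\bigoplus H_1(\Gamma_i)\to H_1(\Gamma)$ is a different map in the opposite direction: the three groups sit as a chain $\bigoplus H_1(\Gamma_i)\subseteq H_1(\Gamma)\subseteq\bigoplus H_1(\Gamma/E_{\neq i})$ inside $C_1(\Gamma)=\bigoplus_i\Z^{E_i}$, and it is the second inclusion that is the purity map.

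Once the correct map is used, your combinatorial idea in fact becomes cleaner: both $H_1(\Gamma)$ and $\bigoplus_i H_1(\Gamma/E_{\neq i})$ are kernels of boundary maps, hence saturated in $C_1(\Gamma)$, so the purity map automatically has torsion-free cokernel; this is exactly the argument of \cite[2.4]{Orecchia}. Your Picard--Lefschetz paragraph is then unnecessary: \cref{A=B}(c) already gives, for \emph{any} semiabelian $\mathcal A$, that $l$-toric additivity is equivalent to the purity map being an isomorphism after $\otimes\,\Z_l$. With these two corrections your argument is complete and matches the paper's route.
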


Somewhat surprisingly, the same result does not hold for abelian schemes. We construct an explicit example (\ref{example:uniformization}) of a semiabelian family $\mathcal A/S$, on a base of characteristic zero, that is $l$-toric additive for all primes except one, and therefore fails to be toric additive.
\subsection*{On the proof of \cref{intro:mainthm} and \cref{intro:converse}}
The proof of \cref{intro:mainthm} is constructive and substantially divided into two parts: first, starting from a toric additive $A/U$, we construct a smooth $S$-group scheme $\mathcal N/S$ extending $A$, having the following property: for all test curves $Z\to S$ hitting transversally the normal crossing divisor $D$, the restriction $\mathcal N_{|Z}$ is a N\'eron model of its generic fibre. We call such a model $\mathcal N$ a \textit{test-N\'eron model} for $A$ over $S$. In the second part of the proof, we proceed to check that a test-N\'eron model of a toric additive abelian scheme is in fact a N\'eron model. We remark that for this last fact, it is crucial that test-N\'eron are defined to be group objects; there are examples of objects that are similar to test-N\'eron models, in that they satisfy a similar property with respect to transversal traits, but fail to be a N\'eron model because they do not admit a group structure: an example is the \textit{balanced Picard stack} $\mathcal P_{d,g}\ra \overline{\mathcal M}_g$ constructed by Caporaso in \citep{caporaso}, or those constructed by Andreatta in \citep{Andreatta2001}

We mention that the characteristic zero assumption of \cref{intro:mainthm} is used only in the second part of the proof, (specificially in \cref{dim2}), and not in the construction of the test-N\'eron model. The proof uses a generalization of the result appearing in \citep{edix} on descent of N\'eron models along tamely ramified covers of discrete valuation rings. The only obstruction to removing the characteristic assumption in \cref{intro:mainthm}, appears to be the lack of results descent of N\'eron models along finite flat wildly ramified covers (at least in the case of abelian varieties with semiabelian reduction).

For what concerns \cref{intro:converse}, the assumptions allow to reduce it to a mere abstract algebra statement. One may hope that in fact every N\'eron model were automatically a test-N\'eron model, and that the extra hypothesis in the statement may be dropped. In a forthcoming article, we show that this is indeed the case for N\'eron models of jacobians of nodal curves. If a similar result were established for abelian schemes with semiabelian reduction, then toric additivity would indeed provide a necessary and sufficient criterion for existence of N\'eron models.

\subsection*{Outline}
In \cref{section1}, we follow very closely Expos\'e IX of \citep{SGA7}, \textit{Mod\`eles de N\'eron et monodromie}, where $1$-dimensional generations of abelian varieties are studied in terms of monodromy action on the Tate module. We devote an extensive section to the recollection of the theory developed in [loc. cit], which generalizes for the greatest part to the case of a base of higher dimension without any difficulty. We also introduce the purity maps (\cref{subs:purity_map}), which are something peculiar to bases of dimension higher than $1$ and are central in the definition of toric additivity.
 
In \cref{section2}, we recall the definition of a N\'eron model (\cref{defn_NM}) and state a number of properties regarding the behaviour of N\'eron models under different sorts of base change. Then we pass to study the group of components of a N\'eron model in terms of the monodromy action, following \citep[IX, 11]{SGA7}.

\Cref{section3,section4} form the heart of the article; in \cref{section3}, we introduce the conditions of toric additivity (\cref{def_TA}), $l$-toric additivity (\cref{def_l_TA}), and weak toric additivity (\cref{def_w_TA}). Then we pass to analyzing the relation between the different notions, in \cref{weak_and_l} and \cref{A=B}. In the latter theorem, we show how to express toric additivity in terms of monodromy action on the $l$-adic Tate module, for primes $l$ invertible on the base. Then, we show that the different notions of toric additivity are equivalent in the case of jacobians of curves (\cref{subs:case_of_curves}), but not in the more general case of abelian schemes (\cref{example:uniformization}).

In \cref{section4}, we work under the assumption that the base $S$ is a $\mathbb Q$-scheme; the section is devoted to proving \cref{TA->Neron}. We  first introduce test-N\'eron models (\cref{def:test_NM}); we prove they are unique (\cref{test-NM_unique}) and then that, under the assumption that $\mathcal A/S$ is toric-additive, they also exist (\cref{main_thm}). After a result on descent of test-N\'eron models (\cref{weil}) along tamely ramified covers, we conclude the section by showing that test-N\'eron models are N\'eron models, again under the assumption of toric-additivity (\cref{test=Neron}).

Finally, in \cref{section5}, we prove theorem \ref{intro:converse}.

\section*{Acknowledgements}
A substantial part of this article is the product of my doctoral thesis. I would like to thank my supervisors David Holmes and Qing Liu for their guidance. A big thank goes to 
Hendrik W. Lenstra for pointing out a major error in an earlier version of the document, and to Bas Edixhoven for many useful discussions.

\section{Generalities on semiabelian models}\label{section1}
\subsection{Normal crossing divisors and transversal traits}
We work over a regular, locally noetherian, base scheme $S$.
\begin{definition} 
Given a regular, noetherian local ring $R$, a \textit{regular system of parameters} is a minimal subset $\{r_1,\ldots,r_d\}\subset R$ of generators for the maximal ideal $\m\subset R$.
\end{definition}
\begin{definition}
A \textit{strict normal crossing divisor} $D$ on $S$ is a closed subscheme $D\subset S$ such that, for every point $s\in S$, the preimage of $D$ in the local ring $\O_{S,s}$ is the zero locus of a product $r_1\cdot\ldots\cdot r_n$, where $\{r_1,\ldots,r_n\}$ is a subset of a regular system of parameters $\{r_1,\ldots,r_d\}$ of $\O_{S,s}$.
\end{definition}

Write $\{D_i\}_{i\in\mathcal I}$ for the set of irreducible components of $D$. Then each $D_i$, seen as a reduced closed subscheme of $S$, is regular and of codimension $1$ in $S$; moreover, for every finite subset $\mathcal J\subset \mathcal I$, the intersection $\bigcap_{j\in\mathcal J}D_j$ is regular, and each of its irreducible components has codimension $|\mathcal J|$.

\begin{definition}
A \textit{normal crossing divisor D} on $S$ is a closed subscheme $D\subset S$ for which there exists an \'etale surjective morphism $S'\ra S$ such that the base change $D\times_SS'$ is a strict normal crossing divisor on $S'$.
\end{definition}

Notice that for every geometric point $s$ of $S$, the preimage of a normal crossing divisor $D$ in the spectrum of the strict henselization $\O^{sh}_{S,s}$ is a strict normal crossing divisor.

\begin{definition}\label{trait_transversal}
A \textit{strictly henselian trait} $Z$ is an affine scheme with $\Gamma(Z,\O_Z)$ a strictly henselian discrete valuation ring. Suppose we are given a morphism $f\colon Z\ra S$ and a normal crossing divisor $D$ on $S$. Let $S^{sh}\to S$ be the strict henselization at the closed point of $Z$, $D'=D\times_SS^{sh}$ and $\tilde f\colon Z\to S^{sh}$ the induced morphism. We say that $f$ is \textit{transversal to $D$} if for every component $D_i$ of $D'$ seen with reduced scheme structure, $D'_i\times_{S^{sh}}Z$ is a reduced point or is empty. 
\end{definition}

\subsection{The tame fundamental group}
Suppose that $S$ is strictly henselian, with fraction field $K$, and write $D$ as the union of its irreducible components, $D=D_1\cup D_2\cup \ldots \cup D_n$. Let $U=S\setminus D$, and for each $i=1,\ldots,n$, $U_i=S\setminus D_i$. It is a consequence of Abhyankar's Lemma (\cite[XIII, 5.2]{SGA1}) that every finite etale morphism $V\ra U$, tamely ramified over $D$ (\citep[XIII, 3.2.c)]{SGA1}), with $V$ connected, is dominated by a finite \'etale $W/U$ given by $$\O(W)=\frac{\O(U)[T_1,\ldots,T_n]}{T_1^{m_1}-r_1,\ldots,T_n^{m_1}-r_n}$$
where the integers $m_1,\ldots,m_n$ are coprime to $p$. Denoting by $\mu_{r,U}$ the group-scheme of $r$-roots of unity, it follows that $\underline{\Aut}_U(W)=\prod_{i=1}^n\mu_{m_i,U}$. Then, the \textit{tame fundamental group} of $U$ is $$\pi_1^t(U)=\prod_{i=1}^n \pi_1^t(U_i)=\prod_{i=1}^n \widehat\Z'(1)$$
Here  $\widehat\Z'(1)=\prod_{l\neq p}\Z_l(1)$ and $\Z_l(1)=\lim \mu_{l^r}(\overline K)$ is non-canonically isomorphic to $\Z_l$, an isomorphism being given by a choice of a compatible system $(z_{l^r})_{r\geq 1}$ of primitive $l^r$-roots of unity in $\overline K$. 

For a prime $l\neq p$, the factor $\prod_{i=1}^n \Z_l(1)$ of $\pi_1^t(U))$ is the biggest pro-$l$ quotient of $\pi_1^t(U)$ and will be denoted by $\pi_1^{t,l}(U)$. It is the automorphism group of the fibre functor of finite \'etale morphisms $V\ra U$ of degree a power of $l$.

\subsection{Semiabelian schemes}
\begin{definition}
Let $\kappa$ be a field and $G/\kappa$ a smooth, commutative $\kappa$-group scheme of finite type. We say that $G/\kappa$ is \textit{semiabelian} if it fits into an exact sequence of fppf-sheaves over $\kappa$

\begin{equation}0\ra T \ra G \ra B \ra 0
\label{ex_seq_semiabelian}
\end{equation}

where $T/\kappa$ is a torus and $B/\kappa$ an abelian variety. In fact, the image of $T$ in $G$ is necessarily the maximal subtorus of $G$, so every semiabelian scheme admits a canonical exact sequence \cref{ex_seq_semiabelian}. We call $\mu:=\dim T$ the \textit{toric rank} of $G$ and $\alpha:=\dim B$ its \textit{abelian rank}. These numbers are stable under base field extensions. Notice that $G$ is automatically geometrically connected.

For a general base scheme $S$, a smooth commutative $S$-group scheme $\mathcal G/S$ of finite type is \textit{semiabelian} if for all points $s\in S$, the fibre $\mathcal G_s$ is semiabelian.
\end{definition}

Given a semiabelian scheme $\mathcal G/S$, we define the  functions \begin{equation} \label{mu_function}
\mu\colon S\ra \mathbb Z_{\geq 0},\;\;\; \alpha\colon S\ra \mathbb Z_{\geq 0}
\end{equation} which associate to a point $s\in S$ respectively the toric and abelian rank of $\mathcal G_s$. The sum $\mu+\alpha$ is the locally constant function with value the relative dimension of $\mathcal G/S$.

\begin{comment}

\begin{situation}\label{situation2}\note{Can we erase this, or is the situation invoked anywhere?}
For the rest of part I, we assume that we are in \cref{situation} and that we are also given
\begin{itemize}
\item an abelian scheme $A/U$ of relative dimension $d\geq 0$;
\item a smooth, separated $S$-group scheme of finite presentation $\mathcal A/S$, together with an isomorphism $\mathcal A\times_SU\ra A$, such that the fibrewise-connected component of identity $\mathcal A^0/S$ is semi-abelian.
\end{itemize}

\end{situation}
\end{comment}

\subsection{The $l$-adic Tate module}
Let $S$ be a regular, strictly local scheme, with closed point $s$ and residue field $k=k(s)$ of characteristic $p\geq 0$. We write $K$ for the fraction field of $S$ and we fix a separable closure $K^s$. Let $D=D_1\cup\ldots\cup D_n$ be a normal crossing divisor on $S$ and $U=S\setminus D$. We are also given:
\begin{itemize}
\item an abelian scheme $A/U$ of relative dimension $d\geq 0$;
\item a smooth, separated $S$-group scheme of finite presentation $\mathcal A/S$, together with an isomorphism $\mathcal A\times_SU\ra A$, such that the fibrewise-connected component of identity $\mathcal A^0/S$ is semiabelian.
\end{itemize}

Let $l$ be a prime and $r\geq 0$ an integer; we denote by $\mathcal A[l^r]$ the kernel of the multiplication map $$l^r\colon \mathcal A\ra \mathcal A.$$ It is a closed subgroup scheme of $\mathcal A$, flat and quasi-finite over $S$. Its restriction $\mathcal A[l^r]_U/U$ is a finite $U$-group scheme of order $l^{2rd}$. 

We write $T_l\mathcal A$ for the inverse system of fppf-sheaves of $\Z/l^r\Z$-modules, $\{\mathcal A[l^r]\}_{r\geq 0}$, with transition morphisms $\mathcal A[l^r]\to \mathcal A[l^{r'}]$ given by multiplication by $l^{r-r'}$. 
%It follows from the assumption that $\mathcal A$ is semiabelian, that these morphisms are epimorphisms of fppf-sheaves. [Only true if semiabelian] 
The base changes via $\Spec K\to S$ and $\Spec k\to S$ give rise to two inverse systems $T_l\mathcal A_K$ and $T_l\mathcal A_k$, which can be seen as $l$-divisible groups, and, when $l\neq p$, as lisse $l$-adic sheaves. 

Let now $l\neq p$. In this case $\mathcal A[l^r]$ is also \'etale over $S$, finite over $U$, and tamely ramified over $D$. It follows that the action of $\Gal(K^s|K)$ on $A[l^r](K^s)$ factors via the quotient map
$$\Gal(K^s|K)\ra \pi_1^{t}(U)=\widehat{\Z}'(1)^n.$$ 
We write $G$ for $\pi_1^t(U)$ and $I_i$ for the $i$-th copy of $\widehat{\Z}'(1)$, so that $G=\bigoplus_{i=1}^n I_i$.

The datum of the lisse $l$-adic sheaf $T_l\mathcal A_K$ is equivalent to the datum of the \textit{$l$-adic Tate module}
$$T_lA(K^s)=\lim_r A[l^r](K^s),$$
together with the continuous action of $\pi_1^t(U)$. As an abelian group, $T_lA(K^s)$ is a free $\Z_l$-module of rank $2d$.

Now, we return to the situation where $l$ is any prime number. Over the closed point $s\in S$ there is a canonical exact sequence
$$0\ra T \ra \mathcal A^0_s\ra B\ra 0$$ with $B$ abelian and $T$ a torus; multiplication by $l$ is an epimorphism on $\mathcal A^0_s$ and it follows that we have an exact sequence of $l$-divisible groups
\begin{equation}0\ra T_lT\ra T_l\mathcal A^0_s\ra T_lB\ra 0. \label{ex_seq}
\end{equation}
Write $\mu$ and $\alpha$ for $\mu(s)=\dim T$ and $\alpha(s)=\dim B$. Taking heights of the $l$-divisible groups in the exact sequence \eqref{ex_seq}, we get 
\begin{itemize}
\item $\htt T_lT=\mu(s)$,
\item $\htt T_lB=2\alpha(s)$,
\item $\htt T_l\mathcal A^0_s=\mu(s)+2\alpha(s)=2d-\mu(s)$.
\end{itemize}
When $l\neq p$, the heights of these $l$-adic sheaves can be interpreted as the rank as $\Z_l$-module of the respective groups of $k$-valued points. 

One immediate consequence of the above equatiosn is that the function $\rk T_l\mathcal A^0=2d-\mu \colon S\to \mathbb Z_{\geq 0}$ is lower semi-continuous; it follows that the toric rank function $\mu$ is upper semi-continuous.

The following lemma is particularly useful:
\begin{lemma}\label{TlA=TlA0}
Let $l$ be a prime different from $p$. The inclusion of $l$-adic sheaves $T_l\mathcal A^0\hookrightarrow T_l\mathcal A$ restricts to an equality over the closed point $s$; that is,
\begin{equation}\label{eq6}
(T_l\mathcal A)_s=(T_l\mathcal A^0)_s
\end{equation}
\end{lemma}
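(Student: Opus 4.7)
The plan is to exploit the component group at the special fibre and observe that it contributes pro-zero to the Tate module. More precisely, since $\mathcal A/S$ is of finite presentation and smooth, the fibre $\mathcal A_s$ is a smooth $k$-group of finite type, and $\mathcal A^0_s$ is an open (hence closed, since both are smooth of the same dimension at the identity) normal subgroup; the quotient $\Phi := \mathcal A_s/\mathcal A^0_s$ is therefore a finite étale commutative $k$-group scheme, and one has the short exact sequence of fppf sheaves
\[ 0 \to \mathcal A^0_s \to \mathcal A_s \to \Phi \to 0. \]

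Next I would apply the snake lemma to the endomorphism ``multiplication by $l^r$'' acting on this sequence. Because $\mathcal A^0_s$ is semiabelian, it is an extension of an abelian variety by a torus; both of these factors are $l^r$-divisible as fppf sheaves (for the torus one uses $l \neq p$, which makes $l^r$ invertible in $k$), so multiplication by $l^r$ is surjective on $\mathcal A^0_s$. The snake lemma then yields a short exact sequence of finite étale $k$-groups
\[ 0 \to \mathcal A^0_s[l^r] \to \mathcal A_s[l^r] \to \Phi[l^r] \to 0. \]

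Finally, I would pass to the pro-system over $r$, with transition maps given by multiplication by $l\colon \mathcal A[l^{r+1}]\to \mathcal A[l^r]$. The point is that the inverse system $\{\Phi[l^r]\}_r$ with transitions multiplication by $l$ is pro-isomorphic to zero: since $\Phi$ is finite, the groups $\Phi[l^r]$ stabilize for $r$ large to the $l$-primary part $\Phi[l^\infty]$, which has bounded exponent, so iterating multiplication by $l$ finitely many times annihilates every element. Consequently the pro-system $T_l\Phi$ vanishes, and the inclusion $T_l\mathcal A^0_s\hookrightarrow T_l\mathcal A_s$ becomes an isomorphism of $l$-adic sheaves on $\Spec k$, which is exactly the claim.

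The step I expect to be least routine is the careful bookkeeping of categories: one must distinguish between ``$l^r$-torsion at level $r$'' (where $\mathcal A^0_s[l^r]\subsetneq \mathcal A_s[l^r]$ in general, as soon as $\Phi$ has $l$-torsion) and the pro-system structure used to define $T_l$, in which the transition maps multiplication by $l$ make the finite component-group contribution disappear in the limit. Once this is kept straight, the argument is immediate from the snake lemma.
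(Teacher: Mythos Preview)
Your argument is correct and follows essentially the same route as the paper: both proofs use the exact sequence $0\to\mathcal A^0_s\to\mathcal A_s\to\Phi\to 0$ with $\Phi$ finite, and conclude from the finiteness of $\Phi$ (hence $T_l\Phi=0$) together with the $l$-divisibility of the semiabelian $\mathcal A^0_s$. The only cosmetic difference is that the paper argues directly on $k$-points, observing that each coordinate $x_v$ of an element of $T_l\mathcal A_s(k)$ is infinitely $l$-divisible and hence has trivial image in the finite group $\Phi$, whereas you package the same observation via the snake lemma and the vanishing of the pro-system $\{\Phi[l^r]\}_r$.
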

\begin{proof}
To prove  this, it is enough to check that $T_l\mathcal A_s(k)=T_l\mathcal A^0_s(k)$. If $(x_v)_v$ is an element of the left-hand side, each $x_v$ is a $l^v$-torsion element of $\mathcal A_s(k)$ infinitely divisible by $l$. Let $\Phi$ be the group of components of $\mathcal A_s$; it is a finite abelian group, by the assumption that $\mathcal A$ is of finite presentation. Let $\phi_v$ be the image of $x_v$ in $\Phi$; then $\phi_v$ belongs to the $l^v$-torsion subgroup of $\Phi$. Moreover $\phi_v$ is infinitely divisible by $l$; it follows that $\phi_v=0$, and that $x_v$ lies in $\mathcal A^0_s(k)$. 
\end{proof}
\subsubsection{The fixed part of the Tate module}

Let again $l$ be any prime, and consider the $l^r$-torsion subscheme $\mathcal A[l^r]/S$. As $S$ is henselian, there is a canonical decomposition
$$\mathcal A[l^r]=\mathcal A[l^r]^f\sqcup \mathcal A[l^r]'$$
where $\mathcal A[l^r]^f/S$, called \textit{the fixed part} of $\mathcal A[l^r]$, is finite over $S$ and $\mathcal A[l^r]'_s=\emptyset$. It can be shown that $\mathcal A[l^r]^f$ is an $S$-flat subgroup-scheme of $\mathcal A[l^r]$, which is moreover \'etale if $l\neq p$. 

We define the \textit{fixed part} of $T_l\mathcal A$ as the inverse system $\{\mathcal A[l^r]^f\}_{r\geq 0}$, which is an $l$-divisible group. Of course we have an inclusion
$$T_l\mathcal A^f\into T_l\mathcal A$$
restricting to an isomorphism over the closed fibre.

\subsubsection{The fixed part for $l\neq p$}If $l\neq p$, by strict henselianity of $S$, each finite \'etale scheme $\mathcal A[l^r]^f$ is a disjoint union of copies of $S$ and the $l$-adic sheaf $T_l\mathcal A^f$ is constant. We find \begin{equation}\label{eq4}\mathcal A[l^r]^f(K^s)=\mathcal A[l^r]^f(K)=\mathcal A[l^r]^f(S)=\mathcal A[l^r]^f_s(k)=\mathcal A[l^r]_s(k).
\end{equation}
We will be interested in the $\Z_l$-module
$$T_lA(K^s)^f:=T_l\mathcal A^f(K^s)\subseteq T_lA(K^s)$$ 
which we call again the \textit{fixed part} of the $l$-adic Tate module. 

By taking the limit in \eqref{eq4} and applying \cref{TlA=TlA0}, we find \begin{equation} \label{eq5}T_lA(K^s)^f=T_l\mathcal A^f(S)=T_l\mathcal A_s(k)=T_l\mathcal A^0_s(k).
\end{equation}

This last equality enables us to determine the rank of the fixed submodule of the Tate module,
\begin{equation}\label{rk_fixed}
\rk T_lA(K^s)^f=2d-\mu=\rk T_lA(K^s)-\mu \end{equation}

% Equation \ref{eq6} can be rewritten as $\lim \mathcal A_s[l^r]=\lim \mathcal A^0_s[l^r]$. Now, the restriction functor from the category of finite \'etale schemes over $S$ to the category of finite \'etale schemes over $k$ is an equivalence of categories; it follows that $\lim \mathcal A[l^r]^f=\lim \mathcal A^0[l^r]^f$ and we find 
%\begin{equation}\label{TT0}
%(T_l\mathcal A)^f=(T_l\mathcal A^0)^f.
%\end{equation}

Moreover, we have that \begin{equation}\label{eq7}  
T_l\mathcal A^0_s(k)\otimes_{\Z_l}\Z/l^r\Z=\mathcal A^0_s[l^r](k)\end{equation} since $\mathcal A^0_s(k)$ is $l$-divisible. 
Hence,
\begin{equation}\label{Tate0
}T_lA(K^s)^f\otimes_{\Z_l}\Z/l^r\Z=\mathcal A^0_s[l^r](k).
\end{equation}
In other words, $T_lA(K^s)^f\otimes_{\Z_l}\Z/l^r\Z$ is the submodule of $A[l^r](K^s)$ consisting of those points that extend to sections of the fibrewise-connected component of identity $\mathcal A^0$.

The following lemma gives us an alternative interpretation of the fixed part of $T_lA(K^s)$:

\begin{lemma}\label{fixedpartG}
Let $l\neq p$. The submodule $T_lA(K^s)^f$ is the submodule $T_lA(K^s)^G\subseteq T_lA(K^s)$ of elements fixed by $G=\pi_1^t(U)$.
\end{lemma}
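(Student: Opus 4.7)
The plan is to establish the two inclusions separately, combining them through a rank-plus-saturation argument.

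The inclusion $T_lA(K^s)^f \subseteq T_lA(K^s)^G$ is immediate: each $\mathcal{A}[l^r]^f$ is finite étale over the strictly local base $S$, hence a disjoint union of copies of $S$, so its $K^s$-valued points coincide with its $S$-valued points and are acted upon trivially by $G = \pi_1^t(U)$. The inclusion follows upon passing to the limit.

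For the reverse inclusion, I would observe that both $T_lA(K^s)^f$ and $T_lA(K^s)^G$ are saturated $\Z_l$-submodules of the free $\Z_l$-module $T_lA(K^s)$: the former because its quotient is torsion-free (coming from an $l$-divisible subgroup), the latter because it is the kernel of a $\Z_l$-linear map into a free $\Z_l$-module. By \eqref{rk_fixed}, $\rk_{\Z_l} T_lA(K^s)^f = 2d - \mu(s)$, so it suffices to prove the inequality $\rk_{\Z_l} T_lA(K^s)^G \leq 2d - \mu(s)$; saturation and the first inclusion then force the two submodules to coincide.

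The rank bound is obtained by reduction to the one-dimensional case via a transversal trait. Let $r_1, \ldots, r_d$ be a regular system of parameters of $\O_{S,s}$ with $D_i = V(r_i)$ for $i \leq n$, and let $Z$ be the strict henselization of $\Spec R/(r_1 - r_2, \ldots, r_1 - r_n, r_{n+1}, \ldots, r_d)$ at its closed point. Then $Z$ is a strictly henselian trait and $Z \to S$ is transversal to every $D_i$, since each $r_i$ pulls back to a uniformizer of $Z$. The induced map on tame fundamental groups $\pi_1^t(U_Z) \to \pi_1^t(U) = \bigoplus_i I_i$ is the diagonal inclusion. Choosing compatible geometric base points, the base-change isomorphism $T_lA(K^s) \cong T_lA_Z(K^s_Z)$ turns $G$-invariants into $\pi_1^t(U_Z)$-invariants. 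The group scheme $\mathcal{A}_Z/Z$ satisfies the hypotheses of the lemma in the one-dimensional case, with toric rank $\mu(s)$ at its closed point; by the classical result of SGA 7, IX.11,
\[
T_lA_Z(K^s_Z)^{\pi_1^t(U_Z)} = T_l\mathcal{A}^0_{Z,z}(\kappa(z)),
\]
which has $\Z_l$-rank $2d - \mu(s)$. This yields the required inequality and completes the proof.

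The main obstacle is the reduction to the one-dimensional case: one must construct a trait $Z \to S$ transversal to every boundary component whose tame monodromy lands diagonally in $G$, in order to appeal to the classical result for traits without losing track of the toric rank at the special fibre.
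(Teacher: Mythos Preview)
Your proof is correct and follows the same strategy as the paper: reduce to the one-dimensional case via a transversal trait $Z \subset S$ (constructed exactly as you do), where the result is classical. The paper's endgame is marginally more direct: rather than bounding ranks and invoking saturation of $T_lA(K^s)^f$, it simply notes that $A[l^r](K) \subseteq A[l^r](L)$ (with $L$ the fraction field of $Z$), passes to the limit to get $T_lA(K^s)^G \subseteq T_lA(L^s)^H$, and identifies the right-hand side with $T_l\mathcal A_s(k) = T_lA(K^s)^f$ via the one-dimensional case---so no saturation argument is needed.
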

\begin{proof}
We treat first the case $\dim S=1$; so $S$ is the spectrum of a discrete valuation ring. In this case, $A/K$ admits a N\'eron model, $\mathcal N/S$. By assumption, the fibrewise-connected component of identity $\mathcal A^0$ is semiabelian, and we have an identification $\mathcal N^0=\mathcal A^0$ (\cref{A=N0}).

Now, equality \eqref{eq5} and \cref{TlA=TlA0} tell us that 
$$T_lA(K^s)^f=T_l\mathcal A^0_s(k)=\mathcal T_l\mathcal N^0_s(k)=T_l\mathcal N_s(k).$$
By Hensel's lemma, $\mathcal N_s[l^r](k)=\mathcal N[l^r](S)$ and by the definition of N\'eron model the latter is equal to $\mathcal N_K[l^r](K)=A[l^r](K^s)^G$. 
Hence, $T_lA(K^s)^G=\lim A[l^r](K^s)^G$ is equal to $T_l\mathcal N_s(k)=T_lA(K^s)^f$ and we are done.

Let now $S$ have dimension $\dim S\geq 2$. First, observe that $T_lA(K^s)^f\subseteq T_lA(K^s)^G$: indeed, as $T_l\mathcal A^f$ is constant, its $K^s$-valued point are actually $K$-valued. 

We show the reverse inclusion. We start by claiming that there exists a closed subscheme $Z\subset S$, regular and of dimension $1$, such that $Z\not\subseteq D$. For this, let $\{t_1,\ldots,t_n\}$ be a system of regular parameters of $\O(S)$, cutting out the divisor $D$. We complete the above set to a maximal system $\{t_1,\ldots,t_n,t_{n+1},\ldots,t_{\dim S}\}$ of regular parameters and let $Z=Z(t_1-t_2,t_2-t_3,\ldots,t_{n-1}-t_n,t_{n+1},t_{n+2},\ldots,t_{dim S})$. Now, $\O(Z)$ is a strictly henselian discrete valuation ring, and the generic point $\zeta$ of $Z$ lies in $U$. 

We let $L=k(\zeta)$ and $H=\Gal(L^s|L)$ for some separable closure $L\hookrightarrow L^s$. Since $\mathcal A[l^r]$ is finite \'etale over $U$, we have $\mathcal A[l^r](K)\subseteq \mathcal A[l^r](L)$ and by passing to the limit we obtain $T_lA(K^s)^G\subseteq T_lA(L^s)^H$. Moreover, by the dimension $1$ case, $T_lA(L^s)^H=T_l(\mathcal A_Z)(L^s)^f=T_l\mathcal A_s(k)$; the latter is equal to $T_lA(K^s)^f$, concluding the proof.
\end{proof}

\subsubsection{The toric part of the Tate module}\label{subsection_toric_part}
Denote by $\mathcal T_s$ the biggest subtorus of the semiabelian scheme $\mathcal A^0_s$ and let for the moment $l\neq p$. We have an inclusion of the $l^r$-torsion
$$\mathcal T_s[l^r]\subseteq \mathcal A^0_s[l^r].$$
As the restriction functor between the  category of finite \'etale $S$-schemes and the category of finite \'etale $k$-schemes is an equivalence of categories, we obtain a canonical finite \'etale $S$-subscheme of $\mathcal A^0[l^r]^f$,  called the \textit{toric part} of $\mathcal A^0[l^r]$,
$$\mathcal A^0[l^r]^t\hookrightarrow \mathcal A^0[l^r]^f\hookrightarrow \mathcal A^0[l^r]$$
such that $A^0[l^r]^t\otimes_S k=\mathcal T_s[l^r]$.

Taking the limit, we find a constant $l$-adic subsheaf $T_l\mathcal A^t$ of $T_l\mathcal A^f.$
Then, passing to the generic fibre, we obtain a submodule $T_lA(K^s)^t$ of $T_lA(K^s)^f=T_lA(K^s)^G\subseteq T_lA(K^s)$, which we call \textit{toric part} of $T_lA(K^s)$. Its rank is of course the rank of the $\Z_l$-module $T_l\mathcal T_s(k)$, that is
\begin{equation}\label{rk_toric}
\rk T_lA(K^s)^t=\mu.\end{equation} 

To summarize, we have a filtration of the $l$-adic Tate module
$$0\xhookrightarrow{\mu} T_lA(K^s)^t\xhookrightarrow{2\alpha} T_lA(K^s)^f\xhookrightarrow{\mu} T_lA(K^s)$$
where the numbers on top of the arrows are the ranks of the successive quotients in the filtration.

If $l=p$, it is still possible to construct a canonical $p$-divisible group $T_p\mathcal A^t\into T_p\mathcal A^f$, which, when restricted to the closed fibre, gives the inclusion of $p$-divisible groups $T_p\mathcal T_s\into T_p\mathcal A_s$. As explained in \citep{SGA7}[IX, 5.1. and 6.1.], the functor $Sch/S\to Sets$ of finite subgroup-schemes of $\mathcal A/S$ of multiplicative type, is representable by an \'etale scheme of finite type over $S$. Hence, for every $r$, there is a unique subgroup scheme $\mathcal A[p^r]^t\subset \mathcal A$ restricting to $\mathcal T[p^r]_s$. The $p$-divisible group $T_p\mathcal A^t$ is given by the system $\{\mathcal A[p^r]^t\}_{r\geq 0}.$ 

% Thm 3.4.3.1 Kai-WenLan
% Thm 3.3.2.6 
\subsubsection{The dual abelian variety and the Weil pairing}
We will now only focus on the semi-abelian scheme $\mathcal A^0\subset \mathcal A$; for this reason, we will write simply $\mathcal A$ for it, rather than $\mathcal A^0$. Consider the dual abelian variety $A'_K$ of $A_K$. By \citep[IV, 7.1]{pinceaux}, there exists a unique semi-abelian scheme $\mathcal A'/S$ extending $A'_K$. 
Let $\phi\colon A_K\ra A'_K$ be an isogeny; it extends uniquely to a group homomorphism $\mathcal A\ra \mathcal A'$ (\citep{faltings1990degeneration}, I, 2.7) inducing isogenies
$$\mathcal T_s\ra \mathcal T'_s, \;\;\; \mathcal B_s\ra \mathcal B'_s$$
between the toric and abelian parts of $\mathcal A_s$ and $\mathcal A'_s$. %in particular the induced isogeny [that's what SGA calls full-rank group hom's between lattices..other name?]
%$$T_lA(K^s)\ra T_lA'(K^s)$$
%restricts to isogenies
%$$T_lA^f(K^s)\ra T_lA^{'f}(K^s),\;\;\; T_lA^t(K^s)\ra T_lA^{'t}(K^s).$$
We deduce the equality between the toric and abelian ranks 
$$\mu=\mu'\;\;\; \alpha=\alpha'.$$

By \citep[II, 3.6]{pinceaux} the natural functor 
$$BIEXT(\mathcal A,\mathcal A'; \mathbb G_{m,S})\ra BIEXT(\mathcal A_K,\mathcal A'_K; \mathbb G_{m,K})$$
is an equivalence of categories; thus the Poincar\'e biextension on $A_K\times_KA'_K$ extends uniquely to a biextension on $\mathcal A\times_S\mathcal A'$, and we obtain for every prime $l$ a perfect pairing
\begin{equation} \label{pairing}
T_l\mathcal A\times T_l\mathcal A'\ra T_l(\mathbb G_m)=\underline{\Z}_l(1)
\end{equation}
of projective systems on $S$ extending the Weil pairing $$\chi\colon T_lA_K\times T_lA'_K\ra \underline \Z_l(1).$$

\begin{theorem}[Orthogonality theorem]\label{ort_thm}
The toric part $T_l\mathcal A^t_K$ is the orthogonal of the fixed part $T_lA'^f_K$ via the Weil pairing $\chi$.
\end{theorem}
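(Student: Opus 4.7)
The plan is to combine a rank count with a single orthogonality inclusion. Since the Weil pairing \eqref{pairing} is perfect, the orthogonal of any saturated sub-$\Z_l$-module of $T_lA$ of rank $r$ is a saturated sub-$\Z_l$-module of $T_lA'$ of rank $2d - r$, and biorthogonality holds for saturated submodules. Applying \eqref{rk_fixed}, \eqref{rk_toric}, and the equality $\mu = \mu'$ of toric ranks of $\mathcal A$ and $\mathcal A'$ noted just before the theorem, one has
$$\rk T_lA(K^s)^t + \rk T_lA'(K^s)^f = \mu + (2d - \mu) = 2d.$$
Both submodules are saturated, being the $K^s$-points of subsheaves of $T_l\mathcal A$ and $T_l\mathcal A'$ arising from flat subgroup schemes with torsion-free cokernel on the generic fibre, so once the inclusion $T_lA(K^s)^t \subseteq (T_lA'(K^s)^f)^{\perp}$ is established, the rank count forces equality.

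For the inclusion, I would reduce the computation to the closed fibre. Both $T_l\mathcal A^t$ and $T_l\mathcal A'^f$ are constant $l$-adic sheaves on the strictly henselian base $S$ (for $l\neq p$; for $l = p$ they are $p$-divisible groups of multiplicative type on $S$, as recalled in \cref{subsection_toric_part}), so specialization from the generic to the closed fibre is an isomorphism on each. Meanwhile, the Weil pairing extends to a biextension pairing $T_l\mathcal A \times T_l\mathcal A' \to \underline{\Z}_l(1)$ over $S$ via the unique extension of the Poincar\'e biextension from the generic fibre; specialization of this extended pairing is therefore compatible with specialization of its arguments, and the desired orthogonality at the generic point reduces to showing that the induced pairing on the closed fibre
$$T_l T \times T_l\mathcal A'_s \lra \Z_l(1),$$
where $T \subseteq \mathcal A_s$ is the maximal subtorus, is identically zero.

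This final vanishing is a standard biextension-triviality argument. The Poincar\'e biextension of $\mathcal A_s \times \mathcal A'_s$ by $\mathbb G_m$ restricts, along the inclusion $T \hookrightarrow \mathcal A_s$, to a biextension of $T \times \mathcal A'_s$ by $\mathbb G_m$; such biextensions are classified by homomorphisms from $\mathcal A'_s$ to the character sheaf $\sheafhom(T, \mathbb G_m) = X^*(T)$. Since $X^*(T)$ is \'etale and torsion-free while $\mathcal A'_s$ is connected (as $\mathcal A'/S$ is semiabelian), no nonzero such homomorphism exists. Hence the restricted biextension is trivial and the induced pairing on $l^r$-torsion vanishes for every $r$.

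The main subtlety in making this sketch rigorous is the uniform handling of the case $l = p$: the reduction to the closed fibre and the biextension-triviality both have to be recast in the language of $p$-divisible groups of multiplicative type, using the multiplicative-type subgroup scheme $\mathcal A[p^r]^t \subset \mathcal A$ supplied by \cref{subsection_toric_part}. The character-group computation is intrinsic and survives unchanged, but the verification that specialization is an isomorphism on both factors of the pairing requires the representability result for functors of finite multiplicative-type subgroup schemes rather than the equivalence of \'etale sites.
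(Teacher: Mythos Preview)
Your argument is correct and follows essentially the same route as the paper, which simply defers to \cite[IX, 5.2]{SGA7}: a rank comparison reduces the statement to the single inclusion $T_lA(K^s)^t \subseteq (T_lA'(K^s)^f)^{\perp}$, this inclusion is checked on the closed fibre by constancy of the relevant $l$-adic sheaves (resp.\ $p$-divisible groups), and the vanishing of the pairing $T_l\mathcal T_s \times T_l\mathcal A'_s \to T_l\mathbb G_{m,k}$ is the biextension-triviality statement of \cite[VIII, 4.10]{SGA7}. Your spelled-out justification of that last vanishing via the character group of $T$ is the content of that reference; the precise classification of $\mathrm{BIEXT}(T,G;\mathbb G_m)$ is slightly more delicate than the one-line description you give, but the conclusion (triviality when $G$ is connected) is exactly what is needed and is what SGA7 supplies.
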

\begin{proof}
See \citep[IX, 5.2]{SGA7}. The proof for $\dim S>1$ is the same, using the pairing \eqref{pairing}.
\end{proof}
\subsubsection{The action of $G$ on the Tate module is unipotent}\label{subsub_unipotent}
The orthogonality \cref{ort_thm} permits to describe more explicitly the Galois action on the $l$-adic Tate module when $l\neq p$.
\begin{lemma}
There exists a submodule $V$ of $T=T_lA(K^s)$ such that $G=\pi_1^t(U)$ acts trivially on $V$ and on the quotient $T/V$. 
\end{lemma}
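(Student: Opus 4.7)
The natural candidate for $V$ is the fixed submodule itself, $V := T_lA(K^s)^f = T_lA(K^s)^G$ (the equality is by Lemma \ref{fixedpartG}). Triviality of the $G$-action on $V$ is then automatic from its definition, so the entire content of the lemma reduces to showing that $G$ acts trivially on the quotient $T/V$.

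The plan is to extract this from the orthogonality theorem (\ref{ort_thm}) applied to the dual abelian scheme. Concretely, since $\chi$ is a perfect pairing, \ref{ort_thm} applied to $\mathcal A'$ gives
\[
T_lA(K^s)^f = \bigl(T_lA'(K^s)^t\bigr)^{\perp}
\]
with respect to the Weil pairing $\chi: T_lA(K^s) \times T_lA'(K^s) \to \Z_l(1)$. So to prove $gx - x \in V$ for every $g \in G$ and $x \in T$, it suffices to show $\chi(gx - x, y) = 0$ for every $y \in T_lA'(K^s)^t$.

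The key input is the $G$-equivariance of $\chi$, together with two facts: first, that $G$ acts trivially on $\Z_l(1) = T_l(\mathbb G_m)$ because $S$ is strictly henselian (so its residue field, hence $K$, already contains all $l$-power roots of unity for $l \neq p$); second, that $T_lA'(K^s)^t \subseteq T_lA'(K^s)^f = T_lA'(K^s)^G$, so every $y$ in the toric part is fixed by $G$. Combining these,
\[
\chi(gx - x,\,y) \;=\; \chi(gx,\,gy) - \chi(x,\,y) \;=\; g\cdot \chi(x,y) - \chi(x,y) \;=\; 0,
\]
which is exactly what is needed. Hence $gx - x$ lies in $(T_lA'(K^s)^t)^{\perp} = V$, and $G$ acts trivially on $T/V$.

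No serious obstacle is expected; the argument is essentially a formal consequence of the orthogonality theorem already established, and is the standard SGA 7 style reduction of monodromy unipotency to the pairing between the toric and fixed parts. The only point requiring a bit of care is the remark that $G$ acts trivially on $\Z_l(1)$, which is specific to the strictly henselian setting.
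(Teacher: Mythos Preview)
Your proof is correct and follows essentially the same approach as the paper: both take $V = T^G$, invoke the orthogonality theorem to identify $T/T^G$ with the $\Z_l(1)$-dual of $T'^t$, and conclude triviality of the $G$-action from the inclusion $T'^t \subset T'^G$ together with the trivial action on $\Z_l(1)$. The only cosmetic difference is that the paper phrases this via the identification $T/T^G \cong \Hom_{\Z_l}(T'^t,\Z_l(1))$, while you carry out the equivalent element-wise pairing computation.
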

\begin{proof}
Clearly, $G$ acts trivially on $V=T^G=T_lA(K^s)^f$. Now, as $T^G$ is orthogonal to $T'^t$ (where $T'=T_lA'(K^s)$) via the pairing $\chi$, we obtain a perfect pairing $T/T^G\times T'^t\ra \Z_l(1)$ which identifies $T/T^G$ with $\Hom_{\Z_l}(T'^t,\Z_l(1))$. As $T'^t\subset T'^G$, we conclude that $G$ acts trivially on $T/T^G$. 
\end{proof}

It follows from the above proposition that the action of $G$ on $T_lA(K^s)$ is unipotent of level $2$: that is, writing 
$$\rho\colon G\ra \Aut(T_lA(K^s)\otimes_{\Z_l}\Q_l),$$
we have for every $g\in G$
$$(\rho(g)-\id)^2=0.$$

Because the profinite group $G$ acts on a $\Q_l$-vector space unipotently and continuously, the image of $\rho$ is a pro-$l$-group. Thus, the action of $G$ factors via its biggest pro-$l$-quotient 
$$G=\widehat{\Z}(1)^n=\pi_1^t(U)\ra \pi_1^{t,l}(U)=\Z_l(1)^n.$$
\subsection{Character groups}\label{subs:char_grps}
Consider in addition to the semiabelian scheme $\mathcal A$ also $\mathcal A'$, the unique semiabelian prolongation of the dual abelian scheme $\mathcal A'_U$. Let $T,T'$ be the biggest subtori of the fibres $\mathcal A_s$, $\mathcal A'_s$, defined over the residue field $k=k(s)$ at the closed point $s\in S$, and $X=\Hom_k(T,\mathbb G_{m,s}), X'=\Hom_k(T',\mathbb G_{m,s})$ be their character groups, which are constant sheaves of free abelian groups of the same rank $\mu=\dim T=\dim T'$. In fact, $X$ and $X'$ are respectively determined by the groups $X(k)$, $X'(k)$, so we will confuse them with their groups of $k$-valued points.

The sheaves $X,X'$ lift uniquely to $\mathcal X, \mathcal X'$, constant sheaves over $S$. These can be related to the $l$-divisible group $T_l\mathcal A^t$ of \cref{subsection_toric_part}. Indeed for all primes $l$ we have a natural isomorphism $T[l^r]_s=X^{\vee}\otimes_{\Z/l^r\Z}\mu_{l^r,s}$, where the dual sign stands for dual with values in $\Z$. The two sides of the isomorphism lift uniquely to finite group schemes of multiplicative type over $S$, and we get a natural isomorphism
$$\mathcal A[l^r]^t=\mathcal X^{\vee}\otimes \mu_{l^r,S}.$$

Taking the limit over $r$, we obtain canonical isomorphisms of $l$-divisible groups
\begin{equation}\label{relation_toric_char}\begin{cases}
T_l\mathcal A^t &=\mathcal X^{\vee}\otimes\Z_l(1) \\
T_l\mathcal A'^t &=\mathcal X'^{\vee}\otimes\Z_l(1). 
\end{cases}\end{equation}

Passing to the generic fibre and using the orthogonality theorem \ref{ort_thm}, it follows that there are also canonical isomorphisms
\begin{equation}\label{relation_fixed_char}\begin{cases}
T_l\mathcal A_K/T_l\mathcal A_K^f &=\mathcal X'_K\otimes\Z_l \\
T_l\mathcal A_K'^t/T_l\mathcal A_K'^f &=\mathcal X_K\otimes\Z_l
\end{cases}
\end{equation}

\subsection{Relations between maximal tori of different fibres}\label{sec:relation_tori}
We state first the following fact:
\begin{lemma}\label{lemma:tori_are_split}
Let $t$ be any point of $S$. The maximal torus $T$ contained in the fibre $\mathcal A_t$ is split. 
\end{lemma}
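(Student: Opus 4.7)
My plan is to translate splitness of $T$ into a Galois-representation statement and then exploit the representability of the functor of finite multiplicative-type subgroups which has already been invoked in \cref{subsection_toric_part}. Recall that a torus $T$ over a field $\kappa$ is split precisely when its character lattice $X(T)=\Hom_{\kappa^{\text{sep}}}(T_{\kappa^{\text{sep}}},\mathbb G_m)$ carries the trivial action of $\Gal(\kappa^{\text{sep}}/\kappa)$. So the goal reduces to showing $\Gal(k(t)^{\text{sep}}/k(t))$ acts trivially on $X(T)$.

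Fix an auxiliary prime $\ell$. The $\ell^r$-torsion $T[\ell^r]\subset \mathcal A_t$ is a finite subgroup scheme of multiplicative type whose Cartier dual, as a Galois module, is $X(T)/\ell^r X(T)$. By the representability theorem cited in \cref{subsection_toric_part} (\citep[IX, 5.1 and 6.1]{SGA7}), the functor that to an $S$-scheme $V$ assigns the set of finite multiplicative-type subgroup schemes of $\mathcal A\times_S V$ is representable by an \'etale $S$-scheme $M$ of finite type. The inclusion $T[\ell^r]\hookrightarrow \mathcal A_t$ corresponds to a $k(t)$-valued point of $M$.

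Because $S$ is strictly henselian local, every \'etale $S$-scheme is a disjoint union of copies of $S$, and in particular the chosen $k(t)$-point of $M$ extends to an $S$-valued point. Pulling back the universal subgroup along this $S$-point produces a finite multiplicative-type subgroup $\mathcal H_r\subset \mathcal A$ over $S$ with $(\mathcal H_r)_t = T[\ell^r]$. By Cartier duality over the strictly henselian base, $\mathcal H_r$ is the group scheme of multiplicative type attached to a constant finite abelian group $X_r$; restricting to $\Spec k(t)$ then identifies $X(T)/\ell^r X(T)$ with $X_r$ equipped with trivial Galois action.

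Hence $\Gal(k(t)^{\text{sep}}/k(t))$ acts trivially on $X(T)/\ell^r X(T)$ for every $r\geq 1$. Since $X(T)$ is a finitely generated free $\Z$-module with $\bigcap_{r}\ell^r X(T)=0$, triviality modulo every power of $\ell$ forces triviality on $X(T)$ itself, and $T$ is split. The main step, and the only nontrivial one, is producing the $S$-extension of the $k(t)$-point of $M$ using strict henselianity; once that is in place, the conclusion is a formal rigidity argument combined with a passage to the limit in $r$.
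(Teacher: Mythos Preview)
Your argument has a gap at the extension step. The assertion that ``every \'etale $S$-scheme is a disjoint union of copies of $S$'' is false: only \emph{finite} \'etale $S$-schemes have this form. A general \'etale $S$-scheme of finite type decomposes as a disjoint union of a finite part (copies of $S$) and a part whose closed fibre is empty; the simplest example is the open immersion $S\setminus\{s\}\hookrightarrow S$. What strict henselianity buys you is that every $k(s)$-point of $M$ lifts to an $S$-point, but you are trying to extend a $k(t)$-point with $t$ not the closed point. There is no a priori reason the point $m\in M$ corresponding to $T[\ell^r]\subset\mathcal A_t$ lies in the finite part of $M$ rather than in a component missing the closed fibre, so the extension you claim is not justified.

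The paper's proof sidesteps this by working directly with the torus rather than with its torsion levels. One takes $Z=\overline{\{t\}}\subset S$, which is again strictly local with closed point $s$, and forms the schematic closure $\mathcal T$ of $T$ inside $\mathcal A_Z$; this closure is a subtorus of $\mathcal A_Z$ over $Z$. Its sheaf of characters is then locally constant on the strictly local $Z$, hence constant, so $\mathcal T$ and in particular its generic fibre $T$ are split. If you wanted to repair your approach, you would have to show that the $k(t)$-point $m$ specializes to a $k(s)$-point of $M$ --- but producing such a specialization amounts to extending $T[\ell^r]$ across $Z$, and the natural way to do that is exactly via the schematic closure $\mathcal T$. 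So the missing step in your argument is essentially the content of the paper's proof.
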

\begin{proof}
Let $Z\subset S$ be the schematic closure of $t$ in $S$. It is itself a strictly local scheme. Consider the schematic closure $\mathcal T\subset \mathcal A_Z$ of $T$, which is a closed subgroup scheme of $\mathcal A_Z$, and hence a subtorus (by looking at torsion points). Now, $\mathcal T$ is completely determined by its locally constant sheaf of characters $\mathcal X=\Hom_Z(\mathcal T,\mathbb G_{m,Z})$. Since $Z$ is strictly henselian, $\mathcal X/S$ is constant, i.e., $\mathcal T$ is split, and so is $T$.
\end{proof}

Because of the previous lemma, whenever we work over a strictly henselian local base, we may confuse groups of characters of subtori of fibres of $\mathcal A/S$ with their underlying abstract free abelian groups. 

Now let $x$ be a point of $S$ and let $Z\subset S$ be its schematic closure. Let $T^{(x)}$ be the maximal torus over the fibre $\mathcal A_x$, with character group $X^{(x)}$. The schematic closure of $T^{(x)}$ in $\mathcal A$ is a split torus $\mathcal T$ over $Z$. By taking the fibre at the closed point $s$, we obtain an inclusion of tori $\mathcal T_s\subset T$, where $T$ is the maximal torus at the closed point $s$. This induces a surjective homomorphism of groups of characters 
\begin{equation}\label{def:specialization_map}sp_{x}\colon X\twoheadrightarrow X^{(x)}
\end{equation}
which we call \textit{specialization map} on group of characters.

The inclusion of tori $\mathcal T_s\subset T$ induced for all primes $l$ and $r\geq 0$ an inclusion of torsion $k$-schemes $ \mathcal T_s[l^r]\subset T[l^r]$. By the equivalence of categories between finite subgroup schemes of multiplicative type over $S$ and over $s$, we obtain a finite $S$-subscheme $$\mathcal A[l^r]^{t(x)}\subset \mathcal A[l^r]^t.$$

We call this the \textit{toric part} of $\mathcal A[l^r]$ at $x$. Taking the limit over $r$ we obtain a $l$-divisible subgroup
$$T_l\mathcal A^{t(x)}\subset T_l\mathcal A^t.$$

We define $T_l\mathcal A'^{f(x)}$ to be its orthogonal by the pairing $\chi$. In a similar way, we define $T_l\mathcal A'^{t(x)}$ an $T_l\mathcal A^{f(x)}$. 

Let $S^{x}\to S$ be a strict henselization at $x$. Because the scheme $\mathcal T/Z$ is a split torus, $\mathcal T[l^r]/Z$ is \textit{finite}, from which it follows that the restriction of $\mathcal A[l^r]^{t(x)}$ to $S^{x}$ coincide with the toric part of $\mathcal A[l^r]_{S^{x}}$ as defined in \cref{subsection_toric_part}. A similar statement holds for $T_l\mathcal A^{t(x)}$.

In particular, for $l\neq p$, we have for the generic fibre of $T_l\mathcal A^{f(x)}$ the equality
$$T_lA(K^s)^{f(x)}=T_lA(K^s)^{G^{x}}$$
where $G^{x}=\pi_1^{t,l}(S^{x}\times_SU)$ is identified with the quotient $G=\pi_1^{t,l}(U)\to \prod_{D_i\ni x} \pi_1^{t,l}(S\setminus D_i)=\prod_{D_i\ni x} \Z_l(1)$.

The natural morphisms
$$\frac{T_l\mathcal A_K}{T_l\mathcal A_K^f}\twoheadrightarrow \frac{T_l\mathcal A_K}{T_l\mathcal A_K^{f^{(x)}}}$$
and
$$T_l\mathcal A^{t(x)}\hookrightarrow T_l\mathcal A^t$$
coincide with the morphisms induced by the specialization maps 
$$X'\otimes\Z_l\twoheadrightarrow X'^{(x)}\otimes\Z_l$$
and 
$$\mathcal X^{(x)\vee}\otimes\underline \Z_l(1)\hookrightarrow \mathcal X^{\vee}\otimes \underline\Z_l(1)$$

\subsection{Purity maps}\label{subs:purity_map}
Let $D_1,\ldots,D_n$ be the components of the divisor $D$, with generic points $\zeta_1,\ldots,\zeta_n$.  

Given $t\in S$, and $1\leq i \leq n$ such that $t\in D_i$, we have a specialization map $X_t\twoheadrightarrow X_i$ from the maximal torus of $\mathcal A_t$ to the maximal torus $X_i$ of $\mathcal A_{\zeta_i}$. 

If we associate to $t$ a subset $J_t\subset \{1,\ldots,n\}$ such that $i\in J_t$ if and only if $t\in D_i$, then we obtain a natural homomorphism
\begin{equation}\label{purity_map}
p_t\colon X_t\to \bigoplus_{i\in J_t}X_i
\end{equation}

\begin{definition}\label{def_purity_map}
We call the homomorphism $p_t$ of \eqref{purity_map} the \textit{purity map} at $t$.
\end{definition}

Clearly, purity maps are compatible with specialization, that is, if $t$ specializes to a point $t'$, the following diagram commutes
\begin{center}
\begin{tikzcd}
X_{t'} \arrow[r, "p_{t'}"] \arrow[d, "sp_{t,t'}"] & \bigoplus_{i\in J_{t'}} X_i \arrow[d, "res"] \\
X_t \arrow[r, "p_t"] & \bigoplus_{i\in J_t} X_i 
\end{tikzcd}
\end{center}
where the map res is the projection induced by the inclusion $J_t\subset J_{t'}$. 

let $l$ be a prime different from $p$. Tensoring with $\Z_l$, the purity map becomes the map
\begin{equation}\label{map:purity_l}
p_{t,l}\colon X_{t}\otimes\Z_l=\frac{T_l\mathcal A'(K^s)}{T_l\mathcal A'(K^s)^{G_t}}\to \bigoplus_{i\in J_t} X_i\otimes\Z_l=\bigoplus_{i\in J_t} \frac{T_l\mathcal A'(K^s)}{T_l\mathcal A'(K^s)^{I_i}}
\end{equation}
where $G_t=\pi_1^{t,l}(S\setminus \bigcup_{i\in J_t}D_i)=\bigoplus_{i\in J_t}\pi_1^{t,l}(S\setminus D_i)=\bigoplus_{i\in J_t} I_i$, and where the map toward each factor is the natural projection. We immediately obtain the following:
can be read from
\begin{lemma}\label{lemma:purity_injective}
For any $t\in S$, the purity map $p_t$ is injective. 
\end{lemma}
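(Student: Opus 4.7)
The plan is to reduce injectivity to an abstract statement about fixed submodules, by tensoring with $\mathbb{Z}_l$ for some prime $l\neq p$ and using the identification \eqref{map:purity_l}. Writing $M=T_l\mathcal{A}'(K^s)$, that identification presents $p_t\otimes\mathbb{Z}_l=p_{t,l}$ as the natural homomorphism
$$M/M^{G_t}\longrightarrow \bigoplus_{i\in J_t} M/M^{I_i},$$
whose $i$-th component is the canonical projection (well defined since $I_i\subset G_t$ forces $M^{G_t}\subset M^{I_i}$).

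Next, I would invoke the identity $G_t=\bigoplus_{i\in J_t}I_i$ recalled immediately before the statement. It implies $M^{G_t}=\bigcap_{i\in J_t}M^{I_i}$, so the kernel of the displayed map is $\bigl(\bigcap_{i\in J_t}M^{I_i}\bigr)/M^{G_t}=0$. Hence $p_{t,l}$ is injective for every prime $l\neq p$.

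Finally, to descend from $\mathbb{Z}_l$ back to $\mathbb{Z}$, one uses that the character groups involved are finitely generated free abelian: by \cref{lemma:tori_are_split}, the maximal tori of the fibres of $\mathcal{A}$ over a strictly local base are split, so $X_t$ and each $X_i$ are finitely generated free abelian groups. Flatness of $\mathbb{Z}_l$ over $\mathbb{Z}$ then gives $\ker(p_t)\otimes\mathbb{Z}_l=\ker(p_{t,l})=0$, and since $X_t\hookrightarrow X_t\otimes\mathbb{Z}_l$, we conclude $\ker(p_t)=0$. There is no real obstacle here: once the identification \eqref{map:purity_l} and the description of $G_t$ as a direct sum of inertia subgroups are accepted, the argument is purely formal.
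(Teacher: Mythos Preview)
Your proof is correct and follows essentially the same route as the paper: tensor with $\mathbb{Z}_l$ for a prime $l\neq p$, use the identification \eqref{map:purity_l} together with $M^{G_t}=\bigcap_{i\in J_t}M^{I_i}$ to see that $p_{t,l}$ is injective, and then conclude by freeness of $X_t$. The paper's version is simply terser, noting that since $X_t$ is free it suffices to check injectivity after tensoring with a single $\mathbb{Z}_l$.
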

\begin{proof}
As $X_t$ is a free abelian group, it suffices to check injectivity of $p_{t,l}$ for some prime $l$, which we pick different from $p$. Clearly, $p_{t,l}$ is injective, as $\bigcap T_lA'(K^s)^{I_i}= T_lA'(K^s)^{G_t}$. 
\end{proof}

So, in a sense, all information on character groups of the maximal tori of the fibre is contained in the groups $X_i$ for $i=1,\ldots,n$ (hence the name \textit{purity map} for the map of \cref{purity_map}).

\begin{corollary}\label{coro:mu_relation}
The function $\mu\colon S\to \Z_{\geq 0}$ satisfies the inequality
\begin{equation}\label{mu_relation}
\mu(t)\leq \sum_{i\in J_t}\mu(\zeta_i).
\end{equation}
\end{corollary}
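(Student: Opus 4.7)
The plan is to deduce the inequality directly from the injectivity of the purity map established in \cref{lemma:purity_injective}. First, I would observe that for any point $t \in S$, the character group $X_t$ of the maximal torus of the fibre $\mathcal{A}_t$ is a free abelian group whose rank equals the toric rank $\mu(t)$; this is just the definition of the character group of a split torus of dimension $\mu(t)$, combined with \cref{lemma:tori_are_split} which guarantees that the maximal torus at $t$ is split (after passing to the strict henselization, which does not affect the rank).

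Next, for each $i \in J_t$, the character group $X_i$ of the maximal torus at $\zeta_i$ is a free abelian group of rank $\mu(\zeta_i)$, by the same reasoning. Hence the direct sum $\bigoplus_{i \in J_t} X_i$ is a free abelian group of rank $\sum_{i \in J_t} \mu(\zeta_i)$.

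By \cref{lemma:purity_injective}, the purity map
\[
p_t\colon X_t \hookrightarrow \bigoplus_{i \in J_t} X_i
\]
is an injection of free abelian groups. Taking ranks yields $\mu(t) = \operatorname{rk} X_t \leq \operatorname{rk} \bigoplus_{i \in J_t} X_i = \sum_{i \in J_t} \mu(\zeta_i)$, which is the desired inequality. There is no real obstacle here: the corollary is a direct numerical translation of the injectivity of $p_t$, and the only thing to verify is the identification of the ranks of the character groups with the toric ranks, which is immediate from the definitions.
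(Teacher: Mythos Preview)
Your proof is correct and follows exactly the same approach as the paper: the paper's proof is the single sentence ``It follows by taking ranks in the purity map, which is injective by \cref{lemma:purity_injective},'' and your argument is simply a more detailed unpacking of this.
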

\begin{proof}
It follows by taking ranks in the purity map, which is injective by \cref{lemma:purity_injective}.
\end{proof}

\Cref{coro:mu_relation} and the upper semi-continuity of $\mu$ tell us that $\mu\colon S\to \Z_{\geq 0}$ is actually constant on each locally closed piece of the natural stratification of the divisor $D$.

\subsection{The monodromy pairing}\label{subsubsection_monodromy_pairing}
Assume now that the base $S$ is of dimension $1$; that is, let $S$ be the spectrum of a strictly henselian discrete valuation ring. In \citep[IX]{SGA7} the authors define the \textit{monodromy pairing}, from which one can deduce a particularly nice description of the component group of the N\'eron model of $\mathcal A_K$ over $S$ (see \cref{group_comps}). We are going to follow \citep{SGA7} and briefly present the subject.

Let $l\neq p$ be a prime. We recall that we have an action of $\Gal(K^{s}|K)$ on the $\Z_l$-module $T_lA(K^{s})$ and $T_lA^f(K^{s})$ is the submodule of Galois-fixed elements. Moreover, the action factors via the maximal pro-$l$ quotient $\Gal(K^s|K)\rightarrow G\cong \Z_l(1)$. 

Consider the Weil pairing $T_lA(K^{s})\times T_lA'(K^{s})\to \Z_l(1)$. The pairing is compatible with the Galois action (where $G$ acts trivially on $\Z_l(1)$); so if we take $g\in G$, $x\in T_lA(K^{s})$ and $y\in T_lA'(K^{s})^G$, we obtain
$$(gx-x,y)=\frac{(gx,y)}{(x,y)}=\frac{(x,g^{-1}y)}{(x,y)}=\frac{(x,y)}{(x,y)}=1.$$
By the theorem of orthogonality, $T_lA(K^{s})^t$ is the orthogonal of $T_lA'(K^{s})^G$; hence $gx-x\in T_lA(K^{s})^t$. Notice that $gx-x=0$ for $x\in T_lA(K^{s})^G$; thus we have found a map
\begin{align}\label{map_for_monodromy}G=\Z_l(1) &\to \Hom_{\Z_l}(T_lA(K^{s})/T_lA(K^{s})^G, T_lA(K^{s})^t) \\
g &\mapsto (x \mapsto gx-x) \nonumber
\end{align}

Relations \cref{relation_toric_char} and \cref{relation_fixed_char} give us $T_lA(K^s)^t=X^{\vee}\otimes \Z_l(1)$ and $T_lA(K^s)/T_lA(K^s)^G=X'\otimes \Z_l.$. Now, for a $\Z$-module $M$, we write shorthand $M_l:=M\otimes\Z_l$. The map in \cref{map_for_monodromy} can be rewritten as 
$$\Z_l(1)\to \Hom_{\Z_l}(X'_l,X^{\vee}_l\otimes \Z_l(1))=X'^{\vee}_l\otimes X^{\vee}_l\otimes \Z_l(1)$$

which gives a canonical element of $X'^{\vee}_l\otimes X^{\vee}_l$, or in other words a bilinear pairing \begin{equation}\label{monodromy_for_l}\phi_l\colon X_l\otimes X'_l\to \Z_l.
\end{equation}

A more concrete description of the pairing \eqref{monodromy_for_l} can be given as follows: first, one chooses a topological generator $\sigma$ for $G=\Z_l(1)$, which amounts to fixing an isomorphism of $\Z_l$-modules $\Z_l\cong\Z_l(1)$. Then the homomorphism 
$$X'_l\to X^{\vee}_l\cong X^{\vee}_l\otimes\Z_l(1)$$ associated to $\phi_l$ is the homomorphism
\begin{align}\label{homomorphism_monodromy}
\frac{T_lA(K^s)}{T_lA(K^s)^G} &\to T_lA(K^s)^t\\
x &\mapsto \sigma x - x \nonumber
\end{align}
As the homomorphism \eqref{homomorphism_monodromy} is injective and between free $\Z_l$-modules of the same rank, it follows that the pairing in \cref{monodromy_for_l} is non-degenerate.

\begin{theorem}[\cite{SGA7}, IX.10.4]\label{thm:monodromy_pairing}
There exists a unique non-degenerate, bilinear pairing 
\begin{equation}
\phi \colon X\otimes X'\to \Z
\end{equation} such that for every $l\neq p$ the restriction to $\Z_l$ gives the pairing in \cref{monodromy_for_l}. Moreover, a choice of polarization $\xi\colon \mathcal A'_K\to\mathcal A_K$ induces an isogeny $\xi\colon X\to X'$ and the induced pairing
\begin{align*}
X\otimes X &\to \Z \\
x\otimes y & \mapsto \phi(x\otimes \xi(y))
\end{align*}
is symmetric and positive definite.
\end{theorem}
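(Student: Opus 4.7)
The plan is to prove uniqueness first and then existence by giving a geometric interpretation of the pairing via the rigid-analytic uniformization of the semi-abelian scheme.

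For uniqueness, fix any single prime $l\neq p$. Since $\Z\hookrightarrow \Z_l$ is injective and $X,X'$ are finitely generated free abelian groups, any $\Z$-bilinear pairing $X\otimes X'\to\Z$ whose extension of scalars to $\Z_l$ is $\phi_l$ is determined by $\phi_l$. Thus I only need to produce one integer-valued $\phi$ and check for each $l\neq p$ separately that $\phi\otimes_{\Z}\Z_l=\phi_l$.

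For existence, I would use the rigid-analytic (or formal) uniformization of $\mathcal{A}/S$ (Mumford--Raynaud). After passing to a finite \'etale cover of $K$ if needed (to split the abelian part of the special fibre, which does not affect the pairing since it only changes things by a finite index), one gets a global semi-abelian scheme $\widetilde{\mathcal{A}}/S$, globally an extension of an abelian scheme by the torus $T$ with character group $X$, together with a period homomorphism $u\colon X'\to \widetilde{\mathcal{A}}(K)$ realizing $A_K=\widetilde{\mathcal{A}}_K/u(X')$ analytically. Composing $u$ with the projection $\widetilde{\mathcal{A}}(K)\to T(K)$ and with the valuation $v\colon T(K)/T(R)\isomto X^\vee$, I obtain a homomorphism $X'\to X^\vee$, i.e.\ a bilinear pairing $\phi\colon X\otimes X'\to\Z$. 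The comparison with $\phi_l$ runs through the description of the Galois action on $T_lA(K^s)$ supplied by the uniformization: an $l^r$-th root of a period $u(x')$ in $\widetilde{\mathcal{A}}_K(K^s)$ gives a preimage of the corresponding point of $A_K$ under multiplication by $l^r$, and the action of a generator $\sigma\in G=\Z_l(1)$ multiplies this root by the chosen $l^r$-th root of unity to the power equal to the valuation of $u(x')$. Unwinding through \cref{relation_toric_char} and \cref{relation_fixed_char}, one sees that $\sigma\cdot\widetilde{x'}-\widetilde{x'}\in T_lA(K^s)^t=X^\vee\otimes\Z_l(1)$ is exactly $\phi(-,x')\otimes\sigma$, which matches the map \eqref{homomorphism_monodromy} associated to $\phi_l$.

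For the second assertion, the polarization $\xi$ extends uniquely (by semi-abelian reduction and rigidity) to $\mathcal{A}\to\mathcal{A}'$ and then to the uniformizations $\widetilde{\mathcal{A}}\to\widetilde{\mathcal{A}}'$. The induced isogeny on characters $\xi\colon X\to X'$ interacts with the period morphisms so that the composed pairing $(x,y)\mapsto\phi(x,\xi(y))$ becomes the pairing attached to the pullback polarization on $\widetilde{\mathcal{A}}_K$. Symmetry follows because a polarization is a symmetric homomorphism; for positive definiteness, one pulls back an ample line bundle defining $\xi$ to $\widetilde{\mathcal{A}}_K$ and writes down the associated rigid-analytic theta function: its functional equation with respect to the period lattice $u(X')$ encodes $(x,y)\mapsto\phi(x,\xi(y))$ as the quadratic form controlling the growth of theta, and ampleness forces positive definiteness in the standard way (Riemann-relations argument in the rigid-analytic setting).

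The main obstacle is the compatibility check in the existence step: setting up the uniformization carefully enough to identify the $l$-adic monodromy action with the valuation-theoretic pairing on the period lattice is the crux and requires the nontrivial Mumford--Raynaud machinery; once this dictionary is in hand, integrality, independence of $l$, and positive definiteness all fall out from a single geometric object.
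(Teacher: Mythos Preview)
The paper does not provide its own proof of this theorem; it is stated with a bare citation to \cite{SGA7}, IX.10.4 and no argument follows. So there is nothing in the paper to compare your proposal against.

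That said, your approach via rigid-analytic/Mumford--Raynaud uniformization is a legitimate and well-known route to the monodromy pairing, though it differs from the original SGA7 argument. Grothendieck's proof in \cite{SGA7}, IX constructs the integral pairing directly from the biextension machinery developed in IX.9 (the unique extension of the Poincar\'e biextension over $S$ yields the map $X'\to X^\vee$ at the integral level, without ever leaving algebraic geometry), and then checks compatibility with the $l$-adic constructions. Your route instead extracts the pairing from the period map of the uniformization and reads off integrality and independence of $l$ from the fact that the pairing is visibly valuation-theoretic. Both arguments ultimately encode the same object; the uniformization picture is more concrete and makes positive definiteness transparent (via the theta-function argument you sketch), while the biextension approach stays closer to the sheaf-theoretic setup the paper uses elsewhere.

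One small correction to your sketch: the uniformization exists over the \emph{completion} of the strictly henselian DVR, not merely after a finite \'etale cover of $K$; you should pass to $\widehat{\O_S}$ first. This is harmless for the statement since $X$, $X'$, and the monodromy action are unchanged under completion, but it should be said.
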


\subsection{Monodromy pairing and transversal traits}
We return to hypotheses and notations of \cref{subs:purity_map}. Let $Z$ be a strictly henselian trait with closed point $z$ and fraction field $L=\Frac \O(Z)$, and write $\nu\colon L^{\times}\to \Z$ for the natural valuation. Let $f\colon Z\to S$ be a morphism sending the generic point of $Z$ inside $U=S\setminus D$. To each irreducible component $D_i$ of $D$, we can associate an element $u_i\in \mathcal O(S)$ defining $D_i$. The non-negative number $a_i=\nu(f^*(u_i))$ does not depend on the choice of $u_i$. We obtain an $n$-uple $(a_1,\ldots,a_n)\in \Z_{\geq 0}^n$. For $j\in \{1,\ldots,n\}$, the condition $a_j>0$ is equivalent to $f(z)\in D_j$. Moreover, the morphism $f$ is transversal (\cref{trait_transversal}) if and only if every $a_i$ is either $1$ or $0$.

Now let $X_1,\ldots,X_n$ (resp. $X_1',\ldots,X_n'$) be the groups of characters of the maximal tori of $\mathcal A$ (resp. $\mathcal A'$) at the generic points of $D_1,\ldots,D_n$. There are $n$ monodromy pairings, $\phi_1,\ldots,\phi_n$, with $\phi_i\colon X'_i\to X^{\vee}_i$. 

Let $J\subset\{1,\ldots,n\}$ be the subset of those $j$ with $a_j>0$. Then we obtain one more monodromy pairing,
$$\phi_f\colon Y'\to Y^{\vee}$$ 
where $Y$ and $Y'$ are the character groups of $\mathcal A$ and $\mathcal A'$ at the generic point of $\cap_{j\in J} D_j$. Notice that the purity maps give us an injective morphism 
$$p'\colon Y'\to \bigoplus_{j\in J}X'_j$$
and a surjective one 
$$p^{\vee}\colon \bigoplus X_j^{\vee}\to Y^{\vee}.$$

\begin{proposition}\label{prop:monodromy_transversal}
The monodromy pairing $\phi_f$ is the composition
$$F\colon Y'\xrightarrow{p'} \bigoplus_{j\in J} X'_j\xrightarrow{(a_j\phi_j)_{j\in J}}\bigoplus_{j\in J} X^{\vee}_j\xrightarrow{p^{\vee}} Y^{\vee}.$$
In particular, $\phi_f$ depends only on the tuple $(a_j)_{j\in J}$.
\end{proposition}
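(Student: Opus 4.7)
My plan is to verify the equality $\phi_f = F$ by reducing to an $\ell$-adic computation for each prime $\ell\neq p$. By the uniqueness clause of \cref{thm:monodromy_pairing}, it suffices to check, for every such $\ell$, that the $\ell$-adic monodromy pairing $\phi_{f,\ell}\colon Y'_\ell\to Y^\vee_\ell$ equals the composition $F_\ell$. After tensoring with $\Z_\ell$, the maps $p'$ and $p^\vee$ acquire concrete interpretations via the identifications of \cref{subs:char_grps,sec:relation_tori}: $p'_\ell$ is the map $T_\ell\mathcal A_K/T_\ell\mathcal A_K^{f(f(z))}\to\bigoplus_{j\in J}T_\ell\mathcal A_K/T_\ell\mathcal A_K^{f(\zeta_j)}$ induced by the containments $T_\ell\mathcal A_K^{f(f(z))}=\bigcap_{j\in J}T_\ell\mathcal A_K^{f(\zeta_j)}$ (apply the orthogonality theorem to $T_\ell\mathcal A'^{t(f(z))}=\sum_{j\in J}T_\ell\mathcal A'^{t(\zeta_j)}$); dually, $p^\vee_\ell$ is the sum map $\bigoplus_{j\in J}T_\ell\mathcal A^{t(\zeta_j)}\to T_\ell\mathcal A^{t(f(z))}$.

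The next step is to describe the map on tame fundamental groups induced by $f$. Writing $\sigma$ for a topological generator of $\pi_1^{t,\ell}(\Spec L)\cong\Z_\ell(1)$ compatible with the generators $\sigma_i$ of the factors $I_i\cong\Z_\ell(1)$ of $\pi_1^{t,\ell}(U)$ (via a common compatible system of roots of unity in $K^s$), the functoriality of Abhyankar's construction, applied to the equations $T^m_j=u_j$ pulled back along $f$, shows that $\sigma$ maps to $\sigma_1^{a_1}\cdots\sigma_n^{a_n}$. Hence the action of $\sigma$ on $T_\ell\mathcal A_L(L^s)$ agrees with that of $\sigma_1^{a_1}\cdots\sigma_n^{a_n}$ acting on $T_\ell A(K^s)$.

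Now I use that the action of $\pi_1^{t,\ell}(U)$ on $T_\ell A(K^s)$ is unipotent of level $2$ (\cref{subsub_unipotent}). Setting $N_i:=\sigma_i-1$, the image of $N_i$ lies in $T_\ell A(K^s)^G\subseteq T_\ell A(K^s)^f$ (more precisely in $T_\ell A(K^s)^{t(\zeta_i)}$ by the local orthogonality argument at $\zeta_i$), while each $N_j$ vanishes on $T_\ell A(K^s)^{f(\zeta_j)}\supseteq T_\ell A(K^s)^G$; consequently $N_jN_i=0$ for all $i,j$. Expanding $\sigma_1^{a_1}\cdots\sigma_n^{a_n}=\prod_i(1+N_i)^{a_i}$ and dropping every product of two or more $N_i$'s yields
\[
\rho(\sigma)-1\;=\;\sum_{i=1}^n a_i\,N_i\;=\;\sum_{j\in J}a_j N_j.
\]
By the construction recalled in \cref{monodromy_for_l,homomorphism_monodromy}, the left-hand side is exactly the homomorphism $Y'_\ell\to Y^\vee_\ell\otimes\Z_\ell(1)$ associated with $\phi_{f,\ell}$, while on the right-hand side each $N_j$ vanishes on $T_\ell\mathcal A_K^{f(\zeta_j)}\supseteq T_\ell\mathcal A_K^{f(f(z))}$ and hence factors as $Y'_\ell\twoheadrightarrow X'_{j,\ell}\xrightarrow{N_j}T_\ell\mathcal A^{t(\zeta_j)}$, where this second factor is precisely $\phi_{j,\ell}$.

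Putting the pieces together, $\rho(\sigma)-1$ factors as $p'_\ell$, followed by $\bigoplus_{j\in J}a_j\phi_{j,\ell}$, followed by $p^\vee_\ell$, which is exactly $F_\ell$. I expect the routine part to be the book-keeping with the $\Z_\ell$-identifications of character groups; the main conceptual obstacle is the identification $\sigma\mapsto\sigma_1^{a_1}\cdots\sigma_n^{a_n}$ with compatible choices of generators, which relies on Abhyankar and on consistently using the same primitive roots of unity to trivialise each $\Z_\ell(1)$. Once that is set up, the vanishing $N_iN_j=0$ collapses the expansion of $\prod_i(1+N_i)^{a_i}$ and delivers the formula, and the ``in particular'' statement follows because the right-hand side manifestly depends only on $(a_j)_{j\in J}$.
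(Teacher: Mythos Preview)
Your proof is correct and follows essentially the same approach as the paper's: both reduce to an $\ell$-adic check via \cref{thm:monodromy_pairing}, identify the induced map on tame fundamental groups as $\sigma\mapsto\prod_j\sigma_j^{a_j}$, and then read off the commutativity of the relevant square. You are more explicit than the paper in justifying the identity $\rho(\sigma)-1=\sum_j a_j(\sigma_j-1)$ via the vanishing $N_iN_j=0$ coming from level-$2$ unipotence; the paper leaves this implicit in its additive notation $\sigma\mapsto\sum_j a_j\sigma_j$ and the asserted commutativity of its diagram.
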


\begin{proof}
Let $l\neq p$ be a prime. We denote by a $\wt{\cdot}$ objects tensored with $\Z_l$. It suffices to prove the statement after tensoring with $\Z_l$; that is, to check that $\wt{F}=\wt{\phi_f}$. In fact, it suffices to check that $i\circ\wt{F}=i\circ\wt{\phi_f}$, where $i\colon Y^{\vee}\into T_lA(K^s)$ is the natural inclusion.

The morphism $Z\to S$ induces a group homomorphism of tame pro-$l$ fundamental groups
$$H:=\pi_1^{t,l}(Z\setminus\{z\})=\Z_l(1)\to \pi_1^{t,l}(S\setminus \bigcup_J D_j)=\bigoplus_J I_j$$ with $I_j=\pi_1^{t,l}(S\setminus D_j)\cong \Z_l(1),$
which sends a topological generator $\sigma$ of $H$ to a sum $\sum_{j\in J}a_j\sigma_j$, where $\sigma_j$ is a topological generator of $I_j$.

The above fact expresses the commutativity of the diagram
\begin{center}
\begin{tikzcd}
Y'\otimes\Z_l=\frac{T_lA_L(L^{sep})}{T_lA_L(L^{sep})^{H}} \arrow[rr, "\sigma-1"]\arrow[d] && T_lA_L^t(L^{sep})=Y^{\vee}\otimes\Z_l\\
\bigoplus(X'_j\otimes\Z_l)=\bigoplus \frac{T_lA_K(K^{sep})}{T_lA_K(K^{sep})^{I_j}}\arrow[rr, "(a_j\sigma_j-1)_{j\in J}"]&& \bigoplus T_lA_K^{t_j}(K^{sep})=\bigoplus (X_j^{\vee}\otimes\Z_l) \ar[u]
\end{tikzcd}
\end{center}

The upper map is $\phi_f\otimes \Z_l$; the lower one is $((a_j\phi_j)\otimes\Z_l)_{j\in J}$; the vertical maps are $p'$ and $p^{\vee}$. This concludes the proof.

\end{proof}

\section{N\'eron models and their groups of components}\label{section2}
\subsection{N\'eron models of abelian schemes}
Let $S$ be any scheme, $U\subset S$ an open and $A/U$ an abelian scheme. 

\begin{definition}\label{defn_NM}
A \textit{N\'eron model} for $A$ over $S$ is a smooth, separated algebraic space \footnote{defined as in \citep{stacks}\href{http://stacks.math.columbia.edu/tag/025Y}{TAG 025Y}.)} $\mathcal N/S$ of finite type, together with an isomorphism $\mathcal N\times_SU\rightarrow A$, satisfying the following universal property: for every smooth morphism of schemes $T\ra S$ and $U$-morphism $f\colon T_U\ra A$, there exists a unique morphism $g\colon T\ra \mathcal N$ such that $g_{|U}=f.$
\end{definition}

Here are two easy-to-check fundamental properties of N\'eron models:
\begin{itemize}
\item[i)] Given two N\'eron models $\mathcal M,\mathcal N$ over $S$ for $A$ there is a unique $S$-isomorphism $\mathcal M\to \mathcal N$ compatible with the identifications of $\mathcal M_U$ and $\mathcal N_U$ with $A$.
\item[ii)]applying the defining universal property of N\'eron models to the morphisms $m\colon A\times_U A\ra A, i\colon A\ra A,$ and $0_A\colon U\ra A$ defining the group structure of $A$, we see that $\mathcal N/S$ inherits from $A$ a unique $S$-group-space structure.
\end{itemize}

We also introduce a similar object, which satisfies a weaker universal property:
\begin{definition}\label{def_weak_NM}
A \textit{weak N\'eron model} for $A$ over $S$ is a smooth, separated algebraic space $\mathcal N/S$ of finite type, together with an isomorphism $\mathcal N\times_SU\ra A$, satisfying the following universal property: every section $U\ra A$ extends uniquely to a section $S\ra \mathcal N$.
\end{definition}

In particular, a N\'eron model is a weak N\'eron model. Notice that in the case of weak N\'eron models, we do not have any uniqueness statement, and they need not necessarily inherit a group structure from $A$.

We point out that our \cref{def_weak_NM} of weak N\'eron model differs slightly from the one normally found in the literature: the latter requires that the universal property is satisfied for all \'etale quasi-sections.

\subsection{Base change properties}
We list some properties of compatibility of formation of N\'eron models with base change. The same properties are to be found in \citep[3.2]{Orecchia} with their proofs. Here we report only the statements.

 In general, the property of being a N\'eron model is not stable under arbitrary base change. However, we have that:
\begin{lemma}\label{smooth_base_change}
Let $\mathcal N/S$ be a N\'eron model of $A/U$; let $S'\ra S$ be a smooth morphism and $U'=U\times_SS'$. Then the base change $\mathcal N\times_SS'$ is a N\'eron model of $A_{U'}$.
\end{lemma}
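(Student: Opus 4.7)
The plan is to verify directly that $\mathcal N' := \mathcal N\times_S S'$ satisfies the universal property of \cref{defn_NM} for $A_{U'}$ over $S'$.

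First I note that $\mathcal N'/S'$ inherits smoothness, separatedness and finite type from $\mathcal N/S$ by base change, and that $\mathcal N'\times_{S'}U' = \mathcal N\times_S U\times_S S' \cong A\times_U U' = A_{U'}$, so the required isomorphism is immediate. The essential point is therefore the universal property.

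Given a smooth $S'$-scheme $T\to S'$ and a $U'$-morphism $f\colon T_{U'}\to A_{U'}$, the composition $T\to S'\to S$ is smooth (smooth morphisms are stable under composition), and $f$ composed with the projection $A_{U'}\to A$ is a $U$-morphism $T_U\to A$ (note $T_U = T_{U'}$ since $U' = U\times_S S'$). By the universal property of the N\'eron model $\mathcal N/S$, this extends uniquely to an $S$-morphism $g\colon T\to \mathcal N$. Since $g$ and the structure morphism $T\to S'$ have the same composition to $S$, they induce a unique $S$-morphism $h\colon T\to \mathcal N\times_S S' = \mathcal N'$ by the universal property of fibre products; this $h$ is even an $S'$-morphism.

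It remains to check that $h$ restricts to $f$ on $T_{U'}$ and that $h$ is the unique such extension. The first is immediate from the construction: the composition $h_{U'}\colon T_{U'}\to \mathcal N'\times_{S'}U' = A_{U'}$ yields $g_{|U} = f$ after projecting to $A$, and its structural component to $U'$ is the given one, so by the universal property of $A_{U'} = A\times_U U'$ we recover $f$. Uniqueness of $h$ reduces to uniqueness of $g$: any other $S'$-extension $h'\colon T\to \mathcal N'$ of $f$ gives an $S$-morphism $T\to \mathcal N$ extending $f$ composed with $A_{U'}\to A$, hence equals $g$ by the universal property of $\mathcal N$; combined with agreement on the $S'$-component this forces $h' = h$. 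No substantial obstacle is expected here — this is a formal consequence of fibre-product yoga plus the universal property of $\mathcal N/S$; the only thing to keep in mind is that smoothness of $T\to S'$ together with smoothness of $S'\to S$ is what allows us to invoke the universal property of $\mathcal N$ in the first place.
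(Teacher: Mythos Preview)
Your proof is correct and is the standard direct verification via the universal property: smoothness of $T\to S'\to S$ lets you invoke the N\'eron property of $\mathcal N/S$, and the fibre-product yoga does the rest. The paper itself does not spell out a proof of this lemma but only refers to \cite[3.2]{Orecchia}; your argument is exactly the expected one.
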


\begin{lemma}\label{fpqclocal}
Let $\mathcal N/S$ be a smooth, separated algebraic space of finite type with an isomorphism $\mathcal N\times_SU\ra A$. Let $S'\ra S$ be a faithfully flat morphism and write $U'=U\times_SS'$. If $\mathcal N\times_SS'$ is a N\'eron model of $A\times_UU'$, then $\mathcal N/S$ is a N\'eron model of $A$.
\end{lemma}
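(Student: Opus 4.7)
The strategy is to exploit the universal property of $\mathcal N_{S'}$ and then use fpqc descent for morphisms with target an algebraic space to produce the required extension over $S$.

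Fix a smooth $S$-scheme $T$ and a $U$-morphism $f\colon T_U\to A$; I need to produce a unique $S$-morphism $g\colon T\to \mathcal N$ whose restriction to $T_U$ is $f$. First I would base change along $S'\to S$: set $T'=T\times_S S'$, which is smooth over $S'$, and let $f'\colon T'_{U'}\to A_{U'}$ be the base change of $f$. By hypothesis $\mathcal N_{S'}$ is a N\'eron model of $A_{U'}$ over $S'$, so the universal property of $\mathcal N_{S'}$ produces a unique $S'$-morphism $g'\colon T'\to \mathcal N_{S'}$ restricting to $f'$ on $T'_{U'}$.

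Now I would descend $g'$ to $T$. Write $S''=S'\times_S S'$ with the two projections $p_1,p_2\colon S''\to S'$; pulling back $g'$ along $p_1$ and $p_2$ yields two $S''$-morphisms $g'_1,g'_2\colon T\times_S S''\to \mathcal N\times_S S''$ (the target is the same under both identifications). Both $g'_1$ and $g'_2$ restrict over $U''=U\times_S S''$ to the same morphism, namely the base change of $f$ to $U''$. Since $\mathcal N/S$ is separated, the equalizer of $g'_1$ and $g'_2$ is a closed subspace $E\hookrightarrow T\times_S S''$; as it contains the open subspace $(T\times_S S'')_{U''}$, which is schematically dense in $T\times_S S''$ (because $T\times_S S''\to S$ is flat and $U\hookrightarrow S$ is schematically dense), we conclude $E=T\times_S S''$ and $g'_1=g'_2$. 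By fpqc descent of morphisms into algebraic spaces (cf.\ \citep{stacks}, \href{http://stacks.math.columbia.edu/tag/0ADV}{TAG 0ADV}), $g'$ descends to an $S$-morphism $g\colon T\to \mathcal N$, which extends $f$ by construction.

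Uniqueness of $g$ is immediate from separatedness of $\mathcal N$ and schematic density of $T_U$ in $T$: any two extensions agree on the dense open $T_U$, hence their equalizer is a closed subspace of $T$ containing a schematically dense open, so they coincide everywhere.

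The only non-formal step is the verification that $g'_1=g'_2$, which I expect to be the main point of the proof: we do not have direct access to the N\'eron property of $\mathcal N_{S''}$ (since $S''\to S'$ is only faithfully flat, not smooth, so \cref{smooth_base_change} does not apply), and instead we must bypass it by invoking separatedness of $\mathcal N$ combined with schematic density. Once this equality is established, effectivity of descent is routine.
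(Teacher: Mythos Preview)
Your proof is correct, but it proceeds differently from the paper's. The paper first treats the special case $T=S$ (extending a section $\sigma\colon U\to A$) by a geometric argument: it takes the schematic closure $X\subset\mathcal N$ of $\sigma(U)$ and shows $X\to S$ is an isomorphism by checking this after the faithfully flat base change $S'\to S$, using that schematic closure commutes with flat base change and that over $S'$ the closure is precisely the graph of the section supplied by the N\'eron property. The general case $T\to S$ smooth is then reduced to the section case for $\mathcal N_T/T$ together with the cover $T\times_SS'\to T$, invoking \cref{smooth_base_change} to know that $\mathcal N_{S'}\times_{S'}(T\times_SS')$ is a N\'eron model over $T\times_SS'$.

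Your route is more descent-theoretic: you extend directly over $S'$ and then verify the cocycle condition $p_1^*g'=p_2^*g'$ via the separatedness/schematic-density argument, after which descent of morphisms into algebraic spaces finishes the job. The key input---that $(T\times_SS'')_{U}$ is schematically dense in $T\times_SS''$---is exactly the same fact (commutation of schematic closure with flat base change) that the paper uses, just applied to the equalizer rather than to the graph. Your approach is arguably cleaner in that it avoids the two-step reduction, at the cost of invoking the statement that algebraic spaces are fpqc sheaves; the paper's version stays closer to concrete scheme theory.

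One small correction: the reference you cite for descent of morphisms, \href{http://stacks.math.columbia.edu/tag/0ADV}{TAG 0ADV}, concerns effectivity of descent data for \emph{objects} (algebraic spaces). What you actually need is that an algebraic space, as a functor, satisfies the sheaf condition for fpqc covers; the relevant reference is \href{http://stacks.math.columbia.edu/tag/03W8}{TAG 03W8}.
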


\begin{lemma}\label{desc_smooth}
Let $A/U$ be abelian, $f\colon S'\ra S$ a smooth surjective morphism, $U'=U\times_SS'$, and $\mathcal N'/S'$ a N\'eron model of $A\times_SS'$.
Then there exists a N\'eron model $\mathcal N/S$ for $A$.
\end{lemma}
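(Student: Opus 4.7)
The approach is by descent. Since $S' \to S$ is smooth surjective, hence fpqc, I will construct descent data for $\mathcal N'$ along $S' \to S$, effect the descent to obtain an algebraic space $\mathcal N/S$, and then apply \cref{fpqclocal} to conclude that $\mathcal N/S$ is a N\'eron model of $A$.

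First I would form the fibre product $S'' := S' \times_S S'$ with the two projections $p_1, p_2 \colon S'' \to S'$, which are both smooth, and the triple product $S''' := S' \times_S S' \times_S S'$. By \cref{smooth_base_change}, both $p_1^* \mathcal N'$ and $p_2^* \mathcal N'$ are N\'eron models of $A \times_U U''$ over $S''$, where $U'' = U \times_S S''$. The uniqueness of N\'eron models (applied on $S''$) then produces a unique isomorphism $\varphi \colon p_1^* \mathcal N' \isomto p_2^* \mathcal N'$ restricting to the identity on the common generic fibre $A \times_U U''$. Pulling back to $S'''$, the three pullbacks $p_{ij}^*\varphi$ are all isomorphisms between N\'eron models of $A \times_U (U \times_S S''')$ that restrict to the identity over $U \times_S S'''$; by uniqueness, the cocycle condition $p_{13}^*\varphi = p_{23}^*\varphi \circ p_{12}^*\varphi$ holds automatically.

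Next, I would invoke effectivity of fpqc descent for algebraic spaces (which holds, e.g., by the stack property of $\mathbf{Spaces}$ in the fpqc topology, and a fortiori in the smooth topology): the descent datum $(\mathcal N', \varphi)$ is effective, producing an algebraic space $\mathcal N/S$ together with an isomorphism $\mathcal N \times_S S' \cong \mathcal N'$ compatible with $\varphi$. The properties of being smooth, separated, and of finite type all descend through faithfully flat quasi-compact morphisms, so $\mathcal N/S$ inherits these properties from $\mathcal N'/S'$. Similarly, the identification $\mathcal N' \times_{S'} U' \cong A \times_U U'$ is canonical (it is the identity on the generic fibre), so it descends to an isomorphism $\mathcal N \times_S U \cong A$.

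Finally, having produced a smooth separated algebraic space $\mathcal N/S$ of finite type extending $A$, whose pullback to $S'$ is the N\'eron model $\mathcal N'$, I would apply \cref{fpqclocal} directly to conclude that $\mathcal N/S$ is itself a N\'eron model of $A$. The only step requiring care is the appeal to effective descent for algebraic spaces along a smooth (rather than merely \'etale) cover, but this is standard and causes no real difficulty; everything else is formal manipulation of the uniqueness in the defining universal property.
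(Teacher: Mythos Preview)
Your proposal is correct and takes essentially the same approach as the paper: construct a descent datum on $S'' = S' \times_S S'$ via uniqueness of N\'eron models, invoke effectivity of descent for algebraic spaces, and conclude via \cref{fpqclocal}. One minor correction: effectivity of descent for algebraic spaces is known along fppf (hence smooth) covers, not arbitrary fpqc covers---but since $S' \to S$ is smooth surjective, this is precisely what is needed and your argument goes through.
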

\begin{comment}
\begin{proof}
Write $S'':=S'\times_SS'$, $p_1,p_2\colon S''\ra S'$ for the two projections and $q\colon S''\ra S$ for $f\circ p_1=f\circ p_2$. By \cref{smooth_base_change}, both $p_1^*\mathcal N$ and $p_2^*\mathcal N$ are N\'eron models of $q^*A$. By the uniqueness of N\'eron models, we obtain a descent datum for $\mathcal N'$ along $S'\ra S$. Effectiveness of descent data for algebraic spaces (\citep{stacks}\href{http://stacks.math.columbia.edu/tag/0ADV}{TAG 0ADV}) yields a smooth, separated algebraic space $\mathcal N/S$ of finite type. By \cref{fpqclocal}, this is a N\'eron model for $A/U$. 
\end{proof}
\end{comment}

\begin{lemma}\label{NMlocaliz}
Assume $S$ is locally noetherian. Let $s$ be a point (resp. geometric point) of $S$ and $\til S$ the spectrum of the localization (resp. strict henselization) at $s$. Suppose that $\mathcal N/S$ is a N\'eron model for $A/U$. Then $\mathcal N\times_S\til S$ is a N\'eron model for $A\times_U \til U$, where $\til U=\til S\times_S U$.
\end{lemma}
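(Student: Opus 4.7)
The strategy is to realise $\til S \to S$ as a cofiltered limit of smooth morphisms $S_i \to S$ of finite presentation, apply Lemma \ref{smooth_base_change} at each finite stage, and then transport the universal property to $\til S$ by a standard limit argument.

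First, I would write $\til S = \lim_{i \in I} S_i$, where the $S_i \to S$ run over open neighbourhoods of $s$ in the localization case, and over pointed \'etale neighbourhoods $(S_i, s_i) \to (S, s)$ in the strict henselization case; the transition morphisms $S_j \to S_i$ are affine and each $S_i \to S$ is smooth of finite presentation. By Lemma \ref{smooth_base_change}, $\mathcal N_i := \mathcal N \times_S S_i$ is a N\'eron model for $A_{U_i} := A \times_U U_i$ over $S_i$ for every $i \in I$.

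Second, I would verify the universal property of $\mathcal N_{\til S} := \mathcal N \times_S \til S$ directly. Let $T \to \til S$ be smooth of finite type (equivalently of finite presentation, since $\til S$ is noetherian by the locally noetherian assumption on $S$) and let $f \colon T_{\til U} \to A_{\til U}$ be a $\til U$-morphism. Standard limit theorems for schemes of finite presentation (EGA IV, \S 8) provide an index $i \in I$, a smooth finite-type morphism $T_i \to S_i$ with $T_i \times_{S_i} \til S \cong T$, and a $U_i$-morphism $f_i \colon (T_i)_{U_i} \to A_{U_i}$ whose pullback along $\til U \to U_i$ recovers $f$ (after possibly enlarging $i$). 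Applying the universal property of the N\'eron model $\mathcal N_i$, $f_i$ extends uniquely to a morphism $g_i \colon T_i \to \mathcal N_i$. Base-changing $g_i$ along $\til S \to S_i$ yields the desired extension $g \colon T \to \mathcal N_{\til S}$ of $f$.

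Third, for uniqueness of $g$: any two extensions $g, g'$ of $f$ descend, via the same limit argument (together with the fact that $\Hom_{S_i}(T_i, \mathcal N_i) \to \Hom_{\til S}(T, \mathcal N_{\til S})$ is the colimit of the analogous Hom-sets at each level), to two extensions $g_j, g'_j$ of $f_j$ over some $S_j$; these must agree by uniqueness in the universal property of $\mathcal N_j$, whence $g = g'$. The only step that requires care is the descent of the pair $(T, f)$ to finite level, but this is a routine consequence of the noetherian hypothesis together with EGA IV.8, so I anticipate no substantive obstacle.
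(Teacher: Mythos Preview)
Your proposal is correct and follows essentially the same approach as the paper's own proof: write $\til S$ as a limit of open (resp.\ \'etale) neighbourhoods $S'$ of $s$, descend the data $(T,f)$ to some $S'$ via EGA~IV~\S8, apply Lemma~\ref{smooth_base_change} there, and pull the resulting extension back to $\til S$. Your treatment of uniqueness (descending two extensions and comparing at finite level) is slightly more explicit than the paper's, which simply invokes uniqueness of the extension over $S'$, but the argument is the same in substance.
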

\begin{comment}
\begin{proof}
Let $\til Y\ra \til S$ be a smooth scheme and $\til f\colon \til Y_{\til U}\ra A_{\til U}$ a morphism. We may assume that $\til Y$ is of finite type over $\til S$, hence of finite presentation. By \citep[3, 8.8.2]{EGA4} there exist an open neighbourhood (resp. \'etale neighbourhood) $S'$ of $s$, a scheme $Y'\ra S'$ restricting to $\til Y$ over $\til S$, and a $(U\times_SS')$-morphism $f'\colon Y'\times_{S'}(U\times_SS')\ra \mathcal N\times_S(U\times_SS')$ restricting to $\til f$ on $\til U$. By \cref{smooth_base_change}, $\mathcal N\times_SS'$ is a N\'eron model of $\mathcal N\times_S(U\times_SS')$, hence we get a unique extension $g'\colon Y'\ra \mathcal N\times_SS'$ of $f'$. The base-change of $g'$ via $\til S\ra S'$ gives us the required unique extension of $\til f$.
\end{proof}
\end{comment}

\begin{proposition}
Assume that $S$ is regular. If $\mathcal A/S$ is a an abelian algebraic space, then it is a N\'eron model of its restriction $\mathcal A\times_SU$.
\end{proposition}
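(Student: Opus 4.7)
The plan is to verify directly the universal property of the N\'eron model. Given a smooth morphism $T\to S$ and a $U$-morphism $f\colon T_U\to \mathcal{A}_U$, I consider its graph, which is a section of the second projection $\mathcal{A}\times_S T\to T$ over the open $T_U\subset T$. The space $\mathcal{A}\times_S T\to T$ is again an abelian algebraic space, and $T$ is regular since it is smooth over the regular base $S$. Thus the problem reduces to the following: if $S$ is regular, $\mathcal{A}/S$ is an abelian algebraic space, and $U\subset S$ is open, then every section $s\colon U\to \mathcal{A}$ extends uniquely to a section $S\to \mathcal{A}$. Uniqueness is immediate from separatedness of $\mathcal{A}/S$, since two extensions would coincide on the schematically dense open $U$.

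For existence, I would argue in two steps. In the first step, for every codimension-$1$ point $\eta$ of $S\setminus U$, the local ring $\mathcal{O}_{S,\eta}$ is a DVR by regularity of $S$, so the valuative criterion of properness applied to $\mathcal{A}/S$ produces a unique extension of $s$ to $\Spec \mathcal{O}_{S,\eta}$. Spreading out these local extensions yields an open $V\subseteq S$ with $S\setminus V$ of codimension $\geq 2$ in $S$, together with an extension $\tilde s\colon V\to \mathcal{A}$ of $s$.

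In the second step, I would invoke Weil's extension theorem: a rational map from a regular scheme (or algebraic space) to a smooth separated group algebraic space, defined on an open whose complement has codimension $\geq 2$, automatically extends everywhere. Applying this to $\tilde s\colon V\to \mathcal{A}$, using smoothness and the group structure of $\mathcal{A}/S$ together with regularity of $S$, produces the desired section $S\to \mathcal{A}$ extending $s$.

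The main obstacle is arranging Weil's extension theorem in the algebraic space setting, since the classical statement (as in Bosch--L\"utkebohmert--Raynaud, \S 4.4) is formulated for smooth separated group schemes. I would handle this by reducing to the scheme case \'etale-locally: an \'etale cover $S'\to S$ trivializes $\mathcal{A}$ enough (or one uses that an abelian algebraic space is \'etale-locally a scheme) to apply the classical statement over $S'$, and then the two resulting extensions on $S'\times_S S'$ must agree by uniqueness (separatedness again), so the extension descends to $S$. Combined with the \'etale descent of morphisms into algebraic spaces, this yields the required global extension and completes the proof.
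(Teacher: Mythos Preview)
Your argument is correct and takes a genuinely different route from the paper's proof. The paper reduces (via \cref{fpqclocal}) to the case where $S$ is strictly local and $\mathcal A$ is a scheme, then identifies $\mathcal A$ with its double dual $\Pic^0_{\mathcal A'/S}$: a $T_U$-point of $\mathcal A$ then corresponds to a line bundle on $\mathcal A'_{T_U}$ of fibrewise degree zero, and since $\mathcal A'_T$ is regular this line bundle extends to $\mathcal A'_T$, producing the desired $T$-point. Your approach instead exploits properness (valuative criterion at codimension-one points) followed by Weil's extension theorem to fill in the remaining codimension $\geq 2$ locus, with \'etale descent to handle the algebraic-space issue. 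Both arguments ultimately rely on regularity of the total space, but in different ways: the paper uses it to extend Weil divisors to Cartier divisors on $\mathcal A'_T$, while you use it to ensure that $\mathcal O_{S,\eta}$ is a DVR and that Weil's theorem applies. Your route is arguably more elementary in that it avoids Picard functors and biduality, but it requires a slightly more careful justification of Weil's theorem in the algebraic-space setting (which you correctly supply via descent; alternatively one could invoke Raynaud's theorem that an abelian algebraic space over a normal base is already a scheme).
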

\begin{comment}
\begin{proof}
Using \cref{fpqclocal}, we may assume that $S$ is strictly local and that $\mathcal A/S$ is a scheme. We identify $\mathcal A$ with its double dual $\mathcal A''=\Pic^0_{\mathcal A'/S}$. Now let $T\ra S$ be smooth and $f\colon T_U\ra \mathcal A_U$. Then $f$ corresponds to an element of $A_U(T_U)=\Pic^0_{\mathcal A'/S}(T_U)=\Pic^0(\mathcal A'_{T_U})/\Pic^0(T_U)$. Let $\mathcal L_U$ be an invertible sheaf with fibres of degree $0$ on $\mathcal A'_{T_U}$ mapping to $f$ in $\mathcal A_U(T_U)$. As $\mathcal A'_T$ is regular, $\mathcal L_U$ extends to an invertible sheaf of degree $0$ on $\mathcal A'_T$, which yields a $T$-point of $\mathcal A''=\mathcal A$ extending $f$. The uniqueness of the extension follows from the separatedness of $\mathcal A/S$.
\end{proof}
\end{comment}
Finally, here is the main theorem about existence of N\'eron models in the case where the base $S$ is of dimension $1$.

\begin{theorem}[\citep{BLR}, 1.4/3]\label{dim1}
Let $S$ be a connected Dedekind scheme with fraction field $K$ and let $A/K$ be an abelian variety. Then there exists a N\'eron model $\mathcal N$ over $S$ for $A/K$. 
\end{theorem}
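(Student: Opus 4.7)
The plan is to reduce the problem to the local case and then construct the N\'eron model as the smooth locus of a suitably modified proper model, endowed with the group structure extending the multiplication on $A$.

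\emph{Step 1: Reduction to a DVR base.} Since $S$ is a connected Dedekind scheme with fraction field $K$, it is covered by the open immersions $\Spec K\hookrightarrow S$ and $\Spec \O_{S,s}\hookrightarrow S$ for each closed point $s\in S$. If for each such $s$ we produce a N\'eron model $\mathcal N_s$ over $\Spec\O_{S,s}$ with generic fiber $A$, then the uniqueness in~i) just after \cref{defn_NM}, together with \cref{NMlocaliz} (stability under localization), allows us to glue these local N\'eron models with $A$ itself into a smooth separated algebraic space $\mathcal N/S$ of finite type. One checks directly from the definition that $\mathcal N/S$ inherits the universal property from its pieces. Hence it suffices to treat the case $S=\Spec R$, with $R$ a discrete valuation ring.

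\emph{Step 2: A weak N\'eron model via smoothening.} Embed $A$ into projective space over $K$ and take the schematic closure to obtain a flat projective model $\mathcal A_0/R$ with generic fiber $A$. This model is in general singular along the special fiber, but one may apply N\'eron's \emph{smoothening process}: a finite sequence of blow-ups with centers in the non-smooth locus of the special fiber, which terminates with a proper flat model $\mathcal A/R$ such that every point of $\mathcal A(R^{sh})$ factors through the smooth locus $\mathcal A^{sm}$. Then $\mathcal A^{sm}/R$ is smooth, separated of finite type, and has the property that every section of $A$ over $U=\Spec K$ extends uniquely to a section of $\mathcal A^{sm}$ over $S$. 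In our terminology, $\mathcal A^{sm}$ is a weak N\'eron model (\cref{def_weak_NM}), enlarged to handle all \'etale local sections.

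\emph{Step 3: From birational group law to group scheme.} The multiplication $m\colon A\times_K A\to A$ and the inversion $\iota\colon A\to A$ extend, via the weak N\'eron property applied after smooth base change, to rational maps $\mathcal A^{sm}\times_R\mathcal A^{sm}\dashrightarrow \mathcal A^{sm}$ and $\mathcal A^{sm}\dashrightarrow \mathcal A^{sm}$ defined on dense opens; associativity on the generic fiber implies that these define a \emph{birational group law} on $\mathcal A^{sm}$ in the sense of Weil. By Weil's extension theorem on birational group laws (BLR, 5.1/5), there is a smooth separated $R$-group scheme $\mathcal N/R$ containing a dense open isomorphic to a dense open of $\mathcal A^{sm}$ and carrying the group law induced by $m$. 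In particular $\mathcal N_K\cong A$ as $K$-group schemes.

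\emph{Step 4: Verification of the universal property.} Given a smooth $R$-scheme $T$ and a $K$-morphism $f\colon T_K\to A=\mathcal N_K$, one uses the group structure on $\mathcal N$ and the weak N\'eron property applied to the smooth $R$-scheme $T$ (together with all translations by sections of $\mathcal N$) to show that $f$ extends uniquely to $T\to \mathcal N$; the key observation is that obstructions to extending a given $K$-morphism at a codimension-one point can always be killed by a translation, and separatedness of $\mathcal N$ guarantees uniqueness. The main technical obstacle in the whole argument is squarely Step~2, i.e.\ the termination of N\'eron's smoothening process and the verification that the resulting smooth locus absorbs all $R^{sh}$-points; Weil's birational group law theorem of Step~3 is likewise delicate but standard, and it is precisely the interplay of these two ingredients that makes the one-dimensional existence theorem nontrivial despite its classical flavor.
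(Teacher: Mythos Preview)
The paper does not prove this statement: it is quoted as a black box from \cite{BLR}, 1.4/3, with no argument given. So there is no ``paper's proof'' to compare against; your outline is essentially a recapitulation of the classical construction in \cite{BLR}, Chapters~3--5, and is correct in spirit.

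That said, two points in your sketch deserve tightening. In Step~1, the schemes $\Spec\O_{S,s}$ are not open in $S$, so you cannot literally glue N\'eron models over them with $A$ over $\Spec K$; you must first spread each local model out to a genuine open neighbourhood of $s$ (using finite presentation) and then glue those opens, or else work globally from the outset as \cite{BLR} does. In Step~3, before invoking Weil's theorem on birational group laws one must discard from the weak N\'eron model those special-fibre components on which a relative volume form does not attain its minimal order (the ``$\omega$-minimal'' components of \cite{BLR}, 4.3); otherwise the birational group law need not be strict and Weil's theorem does not apply directly. With these two repairs your plan is the standard one.
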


\subsection{Connected components of a N\'eron model} \label{group_comps}
Assume now that $S$ is the spectrum of a regular, strictly henselian local ring, with closed point $s$, residue field $k=k(s)$. We make a choice $K\into K^s$ of separable closure of the fraction field. Let $U\subset S$ be the open complement of a normal crossing divisor $D=D_1\cup\ldots\cup D_n$; we write $G=\bigoplus_{i=1}^nI_i$ where $G=\pi_0^t(U)$ and $I_i=\pi_0^t(S\setminus D_i)\cong \widehat\Z'$. 

We let $A/U$ an abelian scheme extending to a semiabelian $\mathcal A/S$. We write $A'/U$ for the dual abelian scheme, and $\mathcal A'/S$ for its unique semiabelian extension. We assume that $A/U$ admits a N\'eron model $\mathcal N/S$.

\subsubsection{The fibrewise-connected component $\mathcal N^0$}
Thanks to the assumption that $A/U$ admits a semiabelian prolongation, the fibrewise-connected component of identity of $\mathcal N$ is known:

\begin{lemma} \label{A=N0}
Suppose $A/U$ admits a N\'eron model $\mathcal N/S$. Then the canonical morphism $\mathcal A\ra \mathcal N$ is an open immersion, and induces an isomorphism from $\mathcal A^0$ to the fibrewise-connected component of identity $\mathcal N^0$. In particular, $\mathcal N^0/S$ is semiabelian.
\end{lemma}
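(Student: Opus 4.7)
The plan is to produce the canonical map from the N\'eron universal property, show it is a group homomorphism restricting to a map between identity components, and then reduce the claim that this restriction is an isomorphism to the known one-dimensional case.

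First I would invoke the N\'eron mapping property for the smooth morphism $\mathcal A \to S$ applied to the tautological $U$-morphism $\mathcal A_U = A \to \mathcal N_U$, producing a canonical $S$-morphism $\phi \colon \mathcal A \to \mathcal N$ extending the identity on $A$. Applying the uniqueness part of the property to $\phi \circ m_{\mathcal A}$ and $m_{\mathcal N} \circ (\phi \times \phi)$, both of which extend the multiplication on $A$, forces $\phi$ to be a homomorphism of $S$-group spaces. Since each fibre $\mathcal A^0_s$ is connected and contains the identity, $\phi$ sends $\mathcal A^0$ into $\mathcal N^0$, which is an open subgroup space of $\mathcal N$ (recall $\mathcal N/S$ is of finite type over a locally noetherian base). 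This yields $\phi^0 \colon \mathcal A^0 \to \mathcal N^0$, a homomorphism of smooth $S$-group spaces of pure relative dimension $d := \dim A$ that restricts to the identity on $U$.

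Next I would show $\phi^0$ is an isomorphism. By flatness and the fibrewise criterion, it suffices to check that $\phi^0_s$ is an isomorphism of $k(s)$-group schemes at every $s \in S$. For this I would reduce to the trait case: given $s$, use regularity of $S$ and the normal-crossing hypothesis on $D$ to choose a transversal trait $f\colon Z \to S$ (in the sense of \cref{trait_transversal}) whose closed point maps to $s$ and whose generic point lies in $U$. By \cref{dim1} the abelian variety $A_{Z}$ over the DVR $Z$ admits a genuine N\'eron model $\mathcal N(A_Z)/Z$, and the N\'eron mapping property produces a canonical morphism $\mathcal N_Z \to \mathcal N(A_Z)$, whose composition with $\phi^0_Z\colon \mathcal A^0_Z \to \mathcal N_Z$ coincides with the classical comparison map for the semiabelian scheme $\mathcal A^0_Z$. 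By the one-dimensional theory (e.g. \citep{BLR}, 7.4/3, or \citep{SGA7}, IX), this classical comparison map is an open immersion identifying $\mathcal A^0_Z$ with $\mathcal N(A_Z)^0$. A dimension count on the fibre over $s$, combined with the fact that $\mathcal N^0_s$ and $\mathcal N(A_Z)^0_s$ are both smooth, connected, and of dimension $d$, then forces $\phi^0_s$ to be an isomorphism.

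Once $\phi^0\colon \mathcal A^0 \xrightarrow{\sim} \mathcal N^0$ is established, $\mathcal N^0$ automatically inherits a semiabelian structure. To upgrade to the statement that $\phi \colon \mathcal A \to \mathcal N$ is an open immersion I would argue as follows: $\phi$ is \'etale along the identity section (since $\phi^0$ is an iso) and, by translating via the $S$-group law, \'etale everywhere; it is generically an isomorphism and fibrewise injective (fibres of $\phi_s$ over $\mathcal N^0_s$ reduce via translation to those of $\phi^0_s$, which are trivial); the separatedness of $\mathcal N$ then makes an \'etale radicial morphism an open immersion. The principal obstacle is the reduction-to-trait step: since $Z \to S$ is not smooth, the pullback $\mathcal N_Z$ need not itself be a N\'eron model of $A_Z$, so one must carefully compare it with the genuine one-dimensional N\'eron model $\mathcal N(A_Z)$ and check that the diagram of comparison maps is compatible with the group structures, which is where the semiabelian assumption on $\mathcal A^0$ is essential.
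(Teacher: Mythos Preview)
Your argument for the isomorphism $\phi^0 \colon \mathcal A^0 \to \mathcal N^0$ is correct and takes a genuinely different route from the paper. The paper checks the isomorphism only at codimension-$1$ points of $S$ (where localization preserves the N\'eron property, \cref{NMlocaliz}, so one is over a DVR) and then invokes \cite[XI, 1.15]{raynaud} to extend from codimension $\leq 1$ to all of $S$. You instead check fibrewise at every $s$ via an auxiliary trait $Z \to S$, comparing with the genuine one-dimensional N\'eron model $\mathcal N(A_Z)$ and using that the composite $\mathcal A^0_s \to \mathcal N^0_s \to \mathcal N(A_Z)^0_s$ is an isomorphism between smooth connected $d$-dimensional groups; this avoids the Raynaud reference at the cost of the extra factorization argument.

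There is, however, a gap in your open-immersion step. Your justification of fibrewise injectivity is that ``fibres of $\phi_s$ over $\mathcal N^0_s$ reduce via translation to those of $\phi^0_s$'', but this only yields $\ker\phi_s \cap \mathcal A^0_s = 0$, not $\ker\phi_s = 0$: a non-identity component of $\mathcal A_s$ could a priori map into $\mathcal N^0_s$, and nothing you have said rules this out. In the immediate context of the lemma (Section~\ref{group_comps}, where $\mathcal A$ is assumed semiabelian, so $\mathcal A = \mathcal A^0$) this is harmless, but in the generality you are implicitly working in it is a real hole. The fix is to reuse your own trait reduction rather than the translation argument: over the DVR $Z$, the classical result \cite[IX, 3.1(e)]{SGA7} (which is what the paper cites directly for this step) says that the full map $\mathcal A_Z \to \mathcal N(A_Z)$, not merely its restriction to identity components, is an open immersion; since this map factors as $\mathcal A_Z \xrightarrow{\phi_Z} \mathcal N_Z \to \mathcal N(A_Z)$, you get that $\phi_s$ is injective on all of $\mathcal A_s$, and your \'etale-plus-radicial argument then goes through.
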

\begin{proof}
The fact that $\mathcal A\ra \mathcal N$ is an open immersion follows from \cite[IX, Prop. 3.1.e]{SGA7}. For every point $s\in S$ of codimension $1$, the restriction of $\mathcal N^0$ to the local ring $\O_{S,s}$ is the N\'eron model of its generic fibre, by \cref{NMlocaliz}. It follows by \citep[XI, 1.15]{raynaud} that the induced morphism $\mathcal A^0\ra \mathcal N^0$ is an isomorphism.
\end{proof}

\subsubsection{The group of components}
We review the explicit description of the group of connected components of a N\'eron model given in \citep[IX, 11]{SGA7} in terms of the monodromy pairing and the Tate module of $A$. 

We are interested in the \'etale $k$-group scheme of finite type $$\underline\Phi=\frac{\mathcal N_s}{\mathcal N^0_s}.$$

As $k$ is separably closed, $\underline\Phi$ is determined by its group of $k$-rational points $\Phi:=\underline\Phi(k)$, which is a finite abelian group. We have \begin{equation} \Phi=\frac{\mathcal N_s(k)}{\mathcal N^0_s(k)}= \frac{\mathcal N(S)}{\mathcal N^0(S)} =\frac{A(U)}{A(U)^0} \label{eq_phi} \end{equation}
where by $A(U)^0$ we denote the subset of $A(U)$ of $U$-points specializing to $S$-points of the identity component $\mathcal N^0$. The second equality is a consequence of Hensel's lemma and the third of the defining property of N\'eron models. 

\subsubsection{The prime-to-$p$ part}
We let $l$ be a prime different from the residue characteristic $p=$ char $k$ and $n\geq 0$ be an integer. Taking $l^n$-torsion in the exact sequence $$0\ra A(U)^0\ra A(U)\ra \Phi\ra 0$$ gives an exact sequence
$$0\ra A[l^n](U)^0\ra A[l^n](U)\ra \Phi[l^n]\ra A(U)^0/l^nA(U)^0$$
Multiplication by $l^n$ on the semiabelian scheme $\mathcal N^0_s$ is an \'etale and surjective morphism; it follows that $\mathcal N^0_s(k)=A(U)^0$ is $l$-divisible; hence
\begin{equation}\label{eq8}
\Phi[l^n]=\frac{A[l^n](U)}{A[l^n](U)^0}
\end{equation}

Writing $T$ for $T_lA(K^s)$, we see that $A[l^n](K)=(T\otimes\Z/l^n\Z)^G$. So we have an expression for the part of \cref{eq8} above the fraction line.

Let's turn to study $A[l^n](U)^0.$ The latter, by the defining property of N\'eron models, is simply $\mathcal N^0[l^n](S)$. Now, every section of a quasi-finite separated scheme over $S$ factors via its finite part, so $\mathcal N^0[l^n](S)=\mathcal N^0[l^n]^f(S)$. Because $\mathcal N^0(S)$ is $l$-divisible, we have $\mathcal N^0[l^n]^f(S)=(T_l\mathcal N^0)^f(S)\otimes_{\Z_l}\Z/l^n\Z$. Now, $(T_l\mathcal N^0)^f(S)$ is equal to $T_lA(K^s)^f$, which in turn is equal to $T_lA(K^s)^{G}$ by \cref{fixedpartG}. 

This shows that $A[l^n](K)^0=T_lA(K^s)^G\otimes_{\Z_l}\Z/l^n\Z$. We have found the relation

\begin{equation}\label{torsion_phi}
\Phi[l^n]=\frac{\left(T\otimes \Z/l^n\Z\right)^G}{T^G\otimes \Z/l^n\Z}.
\end{equation}

By taking the colimit over the powers of $l$ we find that the $l$-part $_l\Phi$ of the group of components is given by 
\begin{equation}\label{l_primary_phi}
_l\Phi=\colim _n\Phi[l^n]=\frac{\left(T\otimes \Q_l/\Z_l\right)^G}{T^G\otimes \Q_l/\Z_l}.
\end{equation}

Let now $X, X'$ be the character groups of the maximal tori of $\mathcal A$ and $\mathcal A'$ at the closed point of $S$. Similarly, for each $i=1,\ldots,n$, let $X_i,X'_i$ be the character groups obtained over the generic point of $D_i$.

Let now $\widetilde \phi_1\colon X'_1\to X^{\vee}_1\,\ldots,\widetilde\phi_n\colon X'_n\to X^{\vee}_n$ be the group homomorphisms deduced from the monodromy pairings at the generic points of $D_1,\ldots,D_n$. We can compose them, for each $i=1,\ldots,n$, with the specialization morphism $X'\to X'_i$ and the dual $X^{\vee}_i\to X^{\vee}$. We obtain for every $i=1,\ldots,n$ a group homomorphism $\psi_i\colon X'\to X^{\vee}$. Tensoring with $\Q_l/\Z_l$ be obtain $\widehat\psi_i\colon X'\otimes \Q_l/\Z_l\to X^{\vee}\otimes \Q_l/Z_l$.
These give us another way to describe $_l\Phi$:

\begin{proposition}\label{prop:grp_comps}
Let $l\neq p$; then $_l\Phi \subseteq \bigcap_{i=1}^n\hat\psi_i$, with equality if $n=1$ or $0$. 
\end{proposition}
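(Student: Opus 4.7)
The plan is to realize ${}_l\Phi$ as a subgroup of $X'\otimes\Q_l/\Z_l$ and then verify that each element of its image lies in $\ker\hat\psi_i$ for every $i$. First, using \eqref{relation_fixed_char} to identify $T/T^G$ with $X'\otimes\Z_l$, and noting that $T^G=T^f$ is a saturated $\Z_l$-submodule of $T$ (the quotient being $\Z_l$-free), the short exact sequence $0\to T^G\to T\to T/T^G\to 0$ of free $\Z_l$-modules remains exact after tensoring with $\Q_l/\Z_l$. Combined with \eqref{l_primary_phi}, this yields a canonical injection ${}_l\Phi\hookrightarrow X'\otimes\Q_l/\Z_l$.

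Next, to show that this image lies in $\ker\hat\psi_i$, I would take $\bar x\in{}_l\Phi$ and lift a $G$-invariant representative in $T\otimes\Q_l/\Z_l$ to some $x\in T\otimes\Q_l$, so that $(g-1)x\in T\otimes\Z_l$ for every $g\in G$. Setting $g=\sigma_i$, a topological generator of $I_i$, the unipotence from \cref{subsub_unipotent} combined with orthogonality (\cref{ort_thm}) applied over the strict henselization at $\zeta_i$ gives $(\sigma_i-1)(T)\subseteq T^{t_i}$. Together with the integrality $(\sigma_i-1)x\in T\otimes\Z_l$ and the saturation of $T^{t_i}$ in $T$ (which follows from the $\Z_l$-freeness of $T/T^t$ via \eqref{relation_toric_char} and of $T^t/T^{t_i}\cong\ker(X\twoheadrightarrow X_i)^\vee\otimes\Z_l(1)$), this yields $(\sigma_i-1)x\in T^{t_i}\otimes\Z_l$.

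I would then identify $\hat\psi_i$ with the map sending the class of $x$ to the class of $(\sigma_i-1)x$ modulo $T^t\otimes\Z_l$, viewed in $T^t\otimes\Q_l/\Z_l\cong X^\vee\otimes\Q_l/\Z_l$ (up to the Tate twist). This identification follows from the explicit description \eqref{homomorphism_monodromy} of the monodromy pairing at $\zeta_i$ together with the discussion in \cref{sec:relation_tori}, which shows that the specialization $X'\to X'_i$ corresponds to the natural surjection $T/T^G\twoheadrightarrow T/T^{I_i}$, and the dual specialization $X_i^\vee\hookrightarrow X^\vee$ to the inclusion $T^{t_i}\hookrightarrow T^t$. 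Since $(\sigma_i-1)x\in T^{t_i}\otimes\Z_l\subseteq T^t\otimes\Z_l$, its class vanishes, establishing the containment ${}_l\Phi\subseteq\bigcap_i\ker\hat\psi_i$.

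For the equality claims, $n=0$ is trivial as both sides vanish. When $n=1$, $G=I_1=\Z_l(1)$ is topologically generated by $\sigma_1$ and $\psi_1=\tilde\phi_1$. Given $y\in\ker\hat\phi_1$, any lift $x\in T\otimes\Q_l$ satisfies $(\sigma_1-1)x\in T^{t_1}\otimes\Z_l\subseteq T\otimes\Z_l$ by the vanishing $\hat\phi_1(y)=0$; continuity together with the $\Z_l$-linearity of the Galois representation (which is level-two unipotent, so the map $N\colon G\to\End(T)$, $g\mapsto \rho(g)-\id$, is additive) propagates this to $(g-1)x\in T\otimes\Z_l$ for every $g\in G$, so $\bar x$ represents an element of $(T\otimes\Q_l/\Z_l)^G$ lifting $y$ to ${}_l\Phi$. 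The main technical obstacle is the bookkeeping of Tate twists and saturation properties, ensuring that reduction modulo $T^t\otimes\Z_l$ genuinely detects the integrality of $(\sigma_i-1)x$ in $T$.
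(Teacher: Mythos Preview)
Your proof is correct and follows essentially the same approach as the paper: both identify $\hat\psi_i$, via the explicit description of the monodromy pairing, with the map induced by $\sigma_i-1$, and both hinge on the same saturation fact (you phrase it as $T^{t_i}$ being saturated in $T$, the paper as injectivity of $X_i^\vee\otimes\Q_l/\Z_l\hookrightarrow X^\vee\otimes\Q_l/\Z_l$, which are dual formulations). The only difference is stylistic: you lift to $T\otimes\Q_l$ and track integrality element-wise, whereas the paper stays at the level of $\Q_l/\Z_l$-coefficients and shows directly that $\ker\hat\psi_i$ is the preimage of $(X'_i\otimes\Q_l/\Z_l)^{I_i}$, hence contains $(X'\otimes\Q_l/\Z_l)^{I_i}$, so that ${}_l\Phi=(X'\otimes\Q_l/\Z_l)^G=\bigcap_i(X'\otimes\Q_l/\Z_l)^{I_i}$ lies in the intersection.
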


\begin{proof}
The case $n=0$ is trivial. Recall from \cref{subsubsection_monodromy_pairing} that a choice of topological generator $\sigma_i$ for $I_i$ identifies $\widetilde\phi_i$ with the homomorphism
$$\frac{T_lA(K^s)}{T_lA(K^s)^{I_i}}\to T_lA^{t_i}(K^s)$$
given by applying $\sigma_i-1$. Then $\widehat \phi_i=\widetilde{\phi_i}\otimes\Q_l/\Z_l$ is the homomorphism
$$\frac{T_lA(K^s)\otimes\Q_l/\Z_l}{T_lA(K^s)^{I_i}\otimes\Q_l/\Z_l}\to T_lA^{t_i}(K^s)\otimes\Q_l/\Z_l$$
given by $\sigma-1$. Its kernel is the subgroup $(X'_i\otimes\Q_l/\Z_l)^{I_i}=\frac{(T_lA(K^s)\otimes\Q_l/\Z_l)^{I_i}}{T_lA(K^s)^{I_i}\otimes\Q_l/\Z_l}$ of $I_i$-invariant elements. This settles in particular the case $n=1$, thanks to \eqref{l_primary_phi}.

Now, consider the map $X^{\vee}_i\to X^{\vee}$. It is injective with free cokernel, because $X\to X_i$ is surjective of free abelian groups.  Therefore the induced map $X^{\vee}_i\otimes \Q_l/\Z_l\to X^{\vee}\otimes\Q_l/\Z_l$ is still injective.
It follows that $\ker \widehat\psi_i$ is the preimage of $(X'_i\otimes\Q_l/\Z_l)^{I_i}$ in $X'\otimes \Q_l/\Z_l$, so it certainly contains $(X'\otimes\Q_l/\Z_l)^{I_i}$. In particular, $_l\Phi=(X'\otimes\Q_l/\Z_l)^G= \bigcap_{i=1}^n(X'\otimes\Q_l/\Z_l)^{I_i}\subset \bigcap_{i=1}^n\ker \widehat\psi_i$. 
\end{proof}

\subsubsection{The full group of components}
In the case $\dim S=1$, we have necessarily $n=0$ or $1$, and \cref{prop:grp_comps} extends to describe the $p$-torsion of $\Phi$. Let $\widetilde\phi\colon X'\to X^{\vee}$ be the homomorphism corresponding to the monodromy pairing (\cref{thm:monodromy_pairing}), and consider the restriction of scalars to $\Q/\Z$, $$\widehat{\phi}\colon X'\otimes\Q/\Z\to X^{\vee}\otimes\Q/\Z.$$
\begin{theorem}[\cite{SGA7}, IX.11.4]\label{thm:grp_comps}
The group of components $\Phi$ is naturally identified with $\coker \widetilde{\phi}$ and with $\ker \widehat \phi.$
\end{theorem}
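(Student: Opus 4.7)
The plan is to split the statement into two parts: the purely homological identification $\coker\widetilde{\phi}\cong\ker\widehat{\phi}$, and the geometric identification $\Phi\cong\coker\widetilde{\phi}$.

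For the first part, recall that the monodromy pairing $\phi\colon X\otimes X'\to\mathbb{Z}$ is non-degenerate (\cref{thm:monodromy_pairing}) and $X,X'$ are free of the same rank $\mu$; hence $\widetilde{\phi}$ is injective with finite cokernel. Apply $-\otimes_{\mathbb{Z}}\mathbb{Q}/\mathbb{Z}$ to the short exact sequence
\begin{equation*}
0\to X'\xrightarrow{\widetilde{\phi}}X^{\vee}\to\coker\widetilde{\phi}\to 0.
\end{equation*}
Since $\mathbb{Q}/\mathbb{Z}$ is divisible, $\coker\widetilde{\phi}\otimes\mathbb{Q}/\mathbb{Z}=0$; since $\coker\widetilde{\phi}$ is finite, $\mathrm{Tor}_{1}^{\mathbb{Z}}(\coker\widetilde{\phi},\mathbb{Q}/\mathbb{Z})=\coker\widetilde{\phi}$. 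The long exact $\mathrm{Tor}$ sequence then yields the canonical identification $\ker\widehat{\phi}=\coker\widetilde{\phi}$.

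For the second part, since $\Phi$ is a finite abelian group it decomposes as $\Phi=\bigoplus_{l}{}_l\Phi$ over primes $l$. For any $l\neq p$, in the case $\dim S=1$ we have $n=1$, so \cref{prop:grp_comps} applies and gives the identification ${}_l\Phi=\ker\widehat{\phi}_l$, where $\widehat{\phi}_l=\widetilde{\phi}\otimes\mathbb{Q}_l/\mathbb{Z}_l$. Taking the direct sum over $l\neq p$ recovers the prime-to-$p$ part of $\ker\widehat{\phi}$, which matches the prime-to-$p$ part of $\coker\widetilde{\phi}$ by the first part of the proof. When $p=0$ the argument is complete; when $p>0$, it remains to handle ${}_p\Phi$.

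The main obstacle is precisely this $p$-primary part: the input of \cref{prop:grp_comps} fails for $l=p$ because $\mathcal{A}[p^n]$ is no longer \'etale and the Galois action on $T_pA(K^s)$ does not see the toric part. The plan is to run the parallel argument using the $p$-divisible group $T_p\mathcal{A}^t\hookrightarrow T_p\mathcal{A}^f$ constructed in \cref{subsection_toric_part} via the equivalence between finite subgroup-schemes of multiplicative type on $S$ and on $s$; this provides the identification $T_p\mathcal{A}^t=\mathcal{X}^{\vee}\otimes\mathbb{Z}_p(1)$ needed to rewrite $\widetilde{\phi}\otimes\mathbb{Q}_p/\mathbb{Z}_p$ as an honest map on $p$-divisible groups, and one combines it with Raynaud's analysis of $\mathcal{N}^0$ as an extension of an abelian scheme by a torus to compute $\mathcal{N}^0[p^n](S)$. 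Since the full argument for $p$-torsion is delicate and carried out in detail in \citep[IX.11]{SGA7}, the proof would simply defer to loc.\ cit.\ for this step, which applies verbatim because in dimension $1$ our setup coincides with that of SGA7.
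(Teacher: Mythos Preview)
Your proposal matches the paper's treatment closely. The paper does not prove this theorem either: it cites \cite[IX.11.4]{SGA7}, then remarks (as you do) that $\coker\widetilde{\phi}=\ker\widehat{\phi}$ follows from $\widetilde{\phi}$ being injective between free abelian groups of the same rank, and that the prime-to-$p$ part is covered by \cref{prop:grp_comps} in the case $n=1$.

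The one point worth flagging is your sketch for the $p$-primary part. The paper is more specific here: it states that the $p$-part relies on the theory of \emph{mixed extensions} developed in \cite[IX.9.3]{SGA7}, and explicitly says this is why it did not attempt to extend \cref{prop:grp_comps} to $l=p$. Your proposed route via the $p$-divisible group $T_p\mathcal{A}^t$ and ``Raynaud's analysis of $\mathcal{N}^0$'' is not the mechanism SGA7 actually uses; the difficulty is not merely computing $\mathcal{N}^0[p^n](S)$ but relating the monodromy pairing itself to the component group without passing through an \'etale Tate module, and this is what the mixed-extension formalism accomplishes. Since you ultimately defer to SGA7 anyway, this is not a gap in your argument, but you should cite IX.9.3 rather than suggest an alternative outline that does not reflect the actual proof.
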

The fact that $\coker \widetilde{\phi}=\ker \widehat \phi$ is an immediate consequence of the fact that $\widetilde\phi$ is an injective homomorphism between free finitely generated abelian groups of the same finite rank, which in turn comes from the fact that the monodromy pairing is non-degenerate.

The proof for the $p$-part relies on the theory of mixed extensions developed in \cite{SGA7}, IX.9.3. As the theory is somewhat complicated, we avoided checking whether proposition \ref{prop:grp_comps} extends to the case $l=p$.
\section{Toric additivity}\label{section3}
In this section we work over a regular, locally noetherian base scheme $S$, with a normal crossing divisor $D\subset S$; we suppose that we are given an abelian scheme $A$ over the open $U=S\setminus D$, of relative dimension $d$, admitting a (unique) semi-abelian model $\mathcal A/S$. We let $A'/U$ be the dual abelian scheme, and $\mathcal A'/S$ the unique semi-abelian scheme extending it.

\subsection{The strictly local case}\label{section:def_toric_add}
Assume that $S$ is strictly local, with fraction field $K$, closed point $s$ and residue field $k=k(s)$ of characteristic $p\geq 0$.
The divisor $D$ has finitely many irreducible components $D_1,\ldots, D_n$. Each $D_i$ has a generic point $\zeta_i$. We let $X$ (resp. $X'$) be the character group of the maximal subtorus of $\mathcal A_s$ (resp. $\mathcal A'_s$), and, for each $1\leq i\leq n$, we let $X_i$ (resp. $X'_i$) be the character group of the maximal subtorus of $\mathcal A_{\zeta_i}$ (resp. $\mathcal A'_{\zeta_i}$). By \cref{lemma:tori_are_split}, all these tori are split and we may consider the character groups as abstract free finitely generated abelian groups.

\subsubsection{Toric additivity} We recall the existence of the injective purity map \ref{def_purity_map}:
$$p\colon X\into X_1\oplus\ldots\oplus X_n.$$

\begin{definition}\label{def_TA}
We say that $\mathcal A/S$ is toric-additive if the purity map
$p$ 
is an isomorphism (or equivalently, is surjective).
\end{definition}

\begin{remark}\label{remark:prop_TA}
Here are some preliminary observations about toric additivity:
\begin{itemize}
    \item[i)] Whether $\mathcal A/S$ is toric-additive depends only on the generic fibre $A_K/K$, since semi-abelian extensions are unique.
    \item[ii)] Suppose that $A/U$ is toric-additive. Let $t$ be another geometric point of $S$, and let $J\subset \{1,\ldots,n\}$ be defined by $j\in J$ if and only if $t\in D_j$. Let $S'\to S$ be the strict henselization at $t$. Let $Y$ be the character group of the maximal torus of $\mathcal A_{t}$. We have a commutative diagram
    \begin{center}
        \begin{tikzcd}
        X \ar[r] \ar[d] & \bigoplus_{i=1,\ldots,n} X_i\ar[d]\\
        Y \ar[r] & \bigoplus_{j\in J}X_j
        \end{tikzcd}
    \end{center}
    where the horizontal maps are the purity maps; the left vertical map is the specialization map (\cref{def:specialization_map}), and the right vertical map is the obvious projection. The two vertical maps are surjective; hence, if the upper horizontal map is surjective, so is the lower one. This shows that if $\mathcal A/S$ is toric additive, so is the base change $\mathcal A_{S'}/S'$. 
    \item[iii)] In the trivial case $n=0$ all character groups vanish, and $\mathcal A/S$ is toric additive.    
    \item[iv)] Suppose $n=1$. Then the injective purity map $X\into X_1$ coincides with the specialization map, which is surjective. Hence in this case $\mathcal A/S$ is automatically toric additive.
\end{itemize}
\end{remark}
\begin{lemma}\label{lemma:A_and_A'}
$\mathcal A/S$ is toric additive if and only if $\mathcal A'/S$ is. 
\end{lemma}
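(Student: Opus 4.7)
My plan is to pass to the $l$-adic Tate module and exploit the perfect Weil pairing together with the orthogonality theorem \cref{ort_thm} to trade the purity map of $\mathcal A$ for a dual statement about $\mathcal A'$.

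\textbf{Reduction to one direction.} By \cref{remark:prop_TA}(i) toric additivity depends only on the generic fibre $A/U$, and since $A''\cong A$ canonically on $U$, the unique semi-abelian prolongations satisfy $\mathcal A''\cong\mathcal A$. Hence by symmetry it suffices to show that toric additivity of $\mathcal A$ implies that of $\mathcal A'$.

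\textbf{Tate-module reformulation.} Fix a prime $l\neq p$. The identifications of \cref{relation_toric_char,relation_fixed_char} give $X\otimes\mathbb Z_l\cong T_l\mathcal A'_K/(T_l\mathcal A'_K)^G$ and $X_i\otimes\mathbb Z_l\cong T_l\mathcal A'_K/(T_l\mathcal A'_K)^{I_i}$, so $p\otimes\mathbb Z_l$ becomes the natural quotient map
\[
T_l\mathcal A'_K/(T_l\mathcal A'_K)^G\longrightarrow \bigoplus_i T_l\mathcal A'_K/(T_l\mathcal A'_K)^{I_i}.
\]
Applying $\operatorname{Hom}_{\mathbb Z_l}(-,\mathbb Z_l)$ and using orthogonality to identify annihilators of fixed parts with toric parts, $p\otimes\mathbb Z_l$ is an isomorphism if and only if the sum-of-inclusions $\bigoplus_i T_l\mathcal A^{t_i}\hookrightarrow T_l\mathcal A^t$ is. Swapping the roles of $\mathcal A$ and $\mathcal A'$ gives the symmetric description of $p'\otimes\mathbb Z_l$.

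\textbf{Comparison via polarisation.} A polarisation $\lambda:A\to A'$, which extends uniquely to $\mathcal A\to\mathcal A'$ by \cite[IV, 7.1]{pinceaux}, restricts to isogenies on maximal tori at every point of $S$ and assembles into a commutative square relating the two sum-of-inclusions maps. Its vertical arrows become isomorphisms over $\mathbb Q_l$ for every $l$, and over $\mathbb Z_l$ whenever $l$ is coprime to $\deg\lambda$. So for all but finitely many primes $l$, toric additivity over $\mathbb Z_l$ for $\mathcal A$ is equivalent to toric additivity over $\mathbb Z_l$ for $\mathcal A'$.

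\textbf{Main obstacle.} The heart of the argument is handling the finitely many primes dividing $\deg\lambda$. There, I would appeal to the polarisation-free Weil pairing from \eqref{pairing}, which identifies the ambient modules $T_l\mathcal A^t$ and $T_l\mathcal A'^t$ with mutual $\mathbb Z_l$-linear duals (modulo Tate twist) in a way compatible with the inertia filtration, forcing the cokernels $T_l\mathcal A^t/\sum_i T_l\mathcal A^{t_i}$ and $T_l\mathcal A'^t/\sum_i T_l\mathcal A'^{t_i}$ to be canonically dual as finite abelian groups. One is therefore trivial if and only if the other is. In the characteristic-zero setting of the paper, there is no prime $l=p$ to worry about, so assembling the $\mathbb Z_l$-equivalences for every $l$ together with the free $\mathbb Z$-module structure of $X,X_i,X',X'_i$ lifts the statement to $\mathbb Z$, concluding that $\mathcal A$ is toric additive if and only if $\mathcal A'$ is.
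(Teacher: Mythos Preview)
Your approach via $l$-adic Tate modules is genuinely different from the paper's, but the ``Main obstacle'' paragraph contains a real error. You assert that the Weil pairing \eqref{pairing} identifies $T_l\mathcal A^t$ and $T_l\mathcal A'^t$ as mutual $\mathbb Z_l$-duals. This is false: since $T_l\mathcal A^t\subseteq T_l\mathcal A^f$ and, by the orthogonality theorem~\ref{ort_thm}, $T_l\mathcal A'^t=(T_l\mathcal A^f)^\perp$, the pairing $\chi$ restricted to $T_l\mathcal A^t\times T_l\mathcal A'^t$ is \emph{identically zero}. What the Weil pairing actually dualises is $T_l\mathcal A^t$ with $T_l\mathcal A'/(T_l\mathcal A')^f$, i.e.\ $X^\vee\otimes\Z_l(1)$ with $X\otimes\Z_l$; it gives no polarisation-free link between $X$ and $X'$. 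Consequently your duality-of-cokernels claim has no foundation, and the treatment of the primes dividing $\deg\lambda$ collapses.

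There is also a secondary issue: you dismiss the prime $l=p$ by invoking ``the characteristic-zero setting of the paper'', but this lemma lives in \cref{section3}, which carries no characteristic hypothesis (that assumption only enters in \cref{section4}). The paper sidesteps both difficulties by never leaving the integral character groups: a polarisation $\lambda$ and its dual $\lambda'$ yield a commutative ladder
\[
\begin{tikzcd}
X \ar[d,"p"] \ar[r,"\lambda^*"] & X' \ar[d,"p'"] \ar[r,"\lambda'^*"] & X \ar[d,"p"] \\
\bigoplus X_i \ar[r] & \bigoplus X'_i \ar[r] & \bigoplus X_i
\end{tikzcd}
\]
and, knowing $p$ is an isomorphism, a determinant comparison of the horizontal compositions forces $p'$ to be an isomorphism as well. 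This is shorter and requires no prime-by-prime analysis.
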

\begin{proof}
Choose a polarization $A\to A'$ of degree $\delta$, which extends uniquely to an isogeny $\lambda \colon \mathcal A\to \mathcal A'$ of degree $\delta$. We let $\lambda'\colon \mathcal A'\to \mathcal A$ be the dual isogeny.

Then $\lambda$ and $\lambda'$ induce isogenies $\lambda^*$ and $\lambda'^*$ on character groups over the closed fibre $s$, of degree $\delta^*\leq \delta$; and also isogenies $\lambda_i\colon X_i\to X'_i$ and $\lambda'_i\colon X'_i\to X_i$ of degree $\delta_i$. The purity maps fit into a commutative diagram
\begin{center}
\begin{tikzcd}
X \ar[d, "p"] \ar[r, "\lambda^*"] & X' \ar[d, "p'"]\ar[r, "\lambda'^*"] & X\ar[d, "p"]\\
\bigoplus X_i\ar[r, "(\lambda_i)"] & \bigoplus X'_i\ar[r, "(\lambda'_i)"] & \bigoplus X_i
\end{tikzcd}
\end{center}
As the purity map $p$ is an isomorphism, the composition of the two lower horizontal maps is multiplication by $(\delta^*)^2$, which has determinant $(\delta^*)^{2\mu}$, where $\mu=\rk X$. Hence both lower horizontal maps have determinant $(\delta^*)^{\mu}$ which is equal to the determinant of $\lambda^*$. It follows that $p'$ is an isomorphism.
\end{proof}

\subsubsection{$l$-toric additivity and the monodromy action}
We keep the hypothesis of \cref{section:def_toric_add}. We fix a prime $l\neq p$ and consider the $l$-adic Tate module $T_lA(K^s)$ where $K^s$ is a separable closure of $K$; we recall that it is a free $\mathbb Z_l$-module of rank $2d$ with an action of $\Gal(K^s|K)$, which factors via the surjection $\Gal(K^s|K)\ra G:=\pi_1^{t,l}(U)=\bigoplus_{i=1}^nI_i$, where $I_i:=\pi_1^{t,l}(S\setminus D_i)=\Z_l(1)$ for each $i$. 

\begin{definition}
Let $l\neq p$ be a prime. We say that $\mathcal A/S$ satisfies condition $\bigstar(l)$ if
\begin{equation}\label{conditionA}
T_lA(K^s)=\sum_{i=1}^nT_lA(K^s)^{\oplus_{j\neq i} I_j} \mbox{ or if } n=0.
\end{equation}
\end{definition}

\begin{remark}\label{remark_TA} $ $ 
\begin{itemize}
\item[i)] Clearly, whether $\mathcal A/S$ satisfies condition $\bigstar(l)$ depends only on the generic fibre $A_K/K$;
\item[ii)] suppose that $\mathcal A/S$ satisfies condition $\bigstar(l)$; let $t$ be another geometric point of $S$ and let $J\subset \{1,\ldots,n\}$ defined by $j\in J$ if and only if $t\in D_j$. Consider the strict henselization $S'$ at $t$. Then the morphism $$\pi_1^{t,l}(U\times_SS')\ra \pi_1^{t,l}(U)$$ induced by $S'\ra S$ is the natural inclusion
$$\bigoplus_{j\in J} I_j\ra \bigoplus_{i=1}^nI_i.$$
It can be easily seen that $\sum_{i=1}^n T_lA(K^s)^{\oplus_{j\neq i} I_j}\subseteq \sum_{j\in J}T_lA(K^s)^{\oplus_{k\in J\setminus\{j\}}I_k}$; hence $\mathcal A_{S'}/S'$ also satisfies condition $\bigstar(l)$.
\item[iii)] Condition $\bigstar(l)$ is automatically satisfied if $n=1$.
\end{itemize}
\end{remark}

\begin{theorem}\label{A=B}
Let $S$ be a regular, strictly local scheme, with closed point $s$ of residue characteristic $p\geq 0$, $D=\bigcup_{i=1}^nD_i$ a normal crossing divisor on $S$. Let $A$ be an abelian scheme over $U=S\setminus D$, of relative dimension $d$, admitting a semiabelian prolongation $\mathcal A/S$. Let $l\neq p$ be a prime and $G=\pi_1^{t,l}(U)=\bigoplus_{i=1}^n I_i$ with $I_i=\Z_l(1)$.

The following conditions are equivalent:
 
 \begin{itemize}
 \item[a)] $\mathcal A/S$ satisfies condition $\bigstar(l)$.
 \item[b)]  The quotient $T_lA(K^s)/T_lA(K^s)^G$ of the Tate module by the subgroup of $G$-invariant elements decomposes canonically into a direct sum of $G$-invariant submodules $V_1,\ldots,V_n$ such that for each $j\neq i$, $I_i\subset G$ acts trivially on $V_j$;
\item[c)] The purity map tensored with $\Z_l$,

$$\phi\otimes_{\Z}\Z_l\colon X\otimes\Z_l\ra (X_1\oplus\ldots\oplus X_n)\otimes\Z_l$$
is an isomorphism.
 \end{itemize}

\end{theorem}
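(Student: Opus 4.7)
The plan is to prove the three-fold equivalence via a chain of implications: first $(a) \Leftrightarrow (b)$ using only the module-theoretic structure of $T := T_lA(K^s)$ and the subgroups $H_i := \bigoplus_{j\neq i} I_j$, and then $(b) \Leftrightarrow (c)$ by translating $(b)$ through the Weil pairing and the orthogonality theorem (\cref{ort_thm}) into a statement about $T'^t$.

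For $(a) \Leftrightarrow (b)$, set $\bar T := T/T^G$ and let $\bar V_i \subseteq \bar T$ be the image of $T^{H_i}$; each $\bar V_i$ is $G$-invariant and fixed by $I_j$ for $j \neq i$, so these are the natural candidates for the $V_i$ in $(b)$. Both conditions then amount to the assertion $\bar T = \sum \bar V_i$, provided this sum is automatically direct. To verify the directness, suppose $\sum_i t_i \in T^G$ with $t_i \in T^{H_i}$; for every $j$ and every $\sigma \in I_j$, applying $\sigma - 1$ annihilates the summands with $i \neq j$ (since $I_j \subseteq H_i$), leaving $(\sigma-1)t_j = 0$. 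Combined with $t_j \in T^{H_j}$, this forces $t_j \in T^G$, so the sum is direct and $(a)$ becomes equivalent to $(b)$ with the canonical choice $V_i = \bar V_i$.

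For $(b) \Leftrightarrow (c)$, invoke the perfect pairing $\bar T \times T'^t \to \Z_l(1)$ from the orthogonality theorem, together with its variant applied at the point $y_i$ whose set of divisor-indices is $\{1,\ldots,n\}\setminus\{i\}$; the latter identifies the orthogonal of $\bar V_i$ in $T'^t$ with $T'^{t(y_i)}$. Dually, $(b)$ reads $T'^t = \bigoplus_i \bigcap_{j\neq i} T'^{t(y_j)}$. Compatibility of purity maps with specialization (\cref{subs:purity_map}) gives inclusions $T'^{t_i} \hookrightarrow \bigcap_{j \neq i} T'^{t(y_j)}$; a rank count using \cref{lemma:purity_injective} at $s$ then forces these inclusions to be equalities, so $(b)$ becomes equivalent to $T'^t = \bigoplus T'^{t_i}$. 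This last assertion is, by one more application of the Weil pairing, the statement that the purity map of $\mathcal A'$ tensored with $\Z_l$ is an isomorphism, which by the polarization-based symmetry of \cref{lemma:A_and_A'} is equivalent to $(c)$.

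I expect the main obstacle to be the second step: one has to juggle three dualities (the Weil pairing, orthogonality between fixed and toric parts, character--cocharacter duality of tori) and verify the identification $\bigcap_{j \neq i} T'^{t(y_j)} = T'^{t_i}$ by a careful rank comparison combining the purity maps at $s$ and at the points $y_i$. The passage between $(c)$ for $\mathcal A$ and $(c)$ for $\mathcal A'$ is comparatively routine once the polarization trick of \cref{lemma:A_and_A'} is in hand.
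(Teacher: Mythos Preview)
Your $(a)\Leftrightarrow(b)$ argument is correct and in fact clarifies the paper: you make explicit that the ``canonical'' $V_i$ in (b) must be $\bar V_i = T^{H_i}/T^G$, and your directness check (applying $\sigma-1$ for $\sigma\in I_j$ to kill all but the $j$-th summand) is clean. The paper proves $b)\Rightarrow a)$ and $c)\Rightarrow b)$ separately, but its $V_i=\alpha^{-1}(T/T^{I_i})$ is exactly your $\bar V_i$, so the content is the same.

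Your $(b)\Leftrightarrow(c)$ via the Weil pairing, however, has a real gap at the rank-count step. The inclusions $T'^{t_i}\hookrightarrow \bigcap_{j\ne i} T'^{t(y_j)}$ are \emph{not} equalities in general: already with $n=3$ one can write down a unipotent $G$-module $T$ (e.g.\ $T=\Z_l^2$ with $\sigma_1=\id$ and $\sigma_2=\sigma_3$ the same Jordan block) where $T'^{t_1}=0$ but $\bigcap_{j\ne 1}T'^{t(y_j)}$ has rank $1$. So you cannot pass from the dual form of (b) to ``$T'^t=\bigoplus T'^{t_i}$'' unconditionally. What does work: under (b), the direct-sum decomposition $T'^t=\bigoplus_i\bigcap_{j\ne i}T'^{t(y_j)}$ together with $\mu\le\sum\mu_i$ forces equality of ranks and hence of the (saturated) summands, giving $(b)\Rightarrow(c)$. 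For the converse $(c)\Rightarrow(b)$, a pure rank count is not enough; you need to first observe that (c) at $s$ implies (c) at each $y_j$ (the purity-map diagram of \cref{remark:prop_TA}(ii)), deduce $T'^{t(y_j)}=\bigoplus_{k\ne j}T'^{t_k}$, and then intersect to recover $T'^{t_i}$. This is all doable but is more than ``a rank count using \cref{lemma:purity_injective}''.

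By contrast, the paper avoids the dual side entirely. It recognises that the map $\alpha\colon T/T^G\to\bigoplus_i T/T^{I_i}$ \emph{is} the $\Z_l$-purity map for $\mathcal A'$, and proves $(a)\Rightarrow(c)$ by a two-line surjectivity check: given $\bar t\in T/T^{I_i}$, lift to $t\in T$, decompose $t=\sum t_j$ with $t_j\in T^{H_j}$ via (a), and note that $t_i$ alone already hits $(0,\dots,\bar t,\dots,0)$. Then $(c)\Rightarrow(b)$ is immediate by setting $V_i=\alpha^{-1}(T/T^{I_i})$. This is substantially shorter than your duality route and sidesteps the points $y_j$ altogether; once you have established $(a)\Leftrightarrow(b)$, you might find it easier to close the cycle via this direct argument rather than through the orthogonality theorem.
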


\begin{proof}

To make the notation lighter, we write $T$ in place of $T_lA(K^s)$.
We start by proving that a) implies c). Notice that $\phi\otimes\Z_l$ is an isomorphism if and only if $\phi'\otimes\Z_l$ is an isomorphism, where $\phi'$ is the purity map for $\mathcal A'$; this is proved in the same way as \cref{lemma:A_and_A'}. Now, we need to show that the natural map of $\Z_l$-modules of \cref{map:purity_l}, \cref{subs:purity_map}
$$\alpha \colon T/T^G\ra \bigoplus_{i=1}^n T/T^{I_i}$$ is surjective. Fix an element $\overline t\in T/T^{I_i}$. It is sufficient to show that there is a lift $t^*\in T$ of $\overline t$, such that $t^*$ is fixed by $I_j$ for all $j\neq i$. Pick then an arbitrary lift $t\in T$; by hypothesis we can write it as $t_1+t_2+\ldots+t_n$, with $t_j\in T^{\bigoplus_{k\neq j} I_k}$. Each $t_j$ with $j\neq i$ maps to zero in $T/T^{I_i}$, hence $t_i$ is a lift of $\overline t$. Moreover $t_i$ is fixed by $I_j$ for every $j\neq i$ as we wished to show.

We prove $b)\Rightarrow a)$. Assume that a decomposition of $T$ as in b) exists. Let $W_i\subset T$ be the preimage of $V_i$ via the quotient map $T\to T/T^G$. Then, for every $1\leq i\leq n$, $W_i\subseteq T^{\bigoplus_{j\neq i}I_j}$. Since $T=\sum_{i=1}^nW_i$, condition $\bigstar(l)$ is evidently satisfied; 

We prove $c)\Rightarrow b)$. We consider the natural map of $\Z_l$-modules \cref{map:purity_l}
$$\alpha \colon T/T^G\ra \bigoplus_{i=1}^n T/T^{I_i}$$
which is an isomorphism because $\phi'\otimes\Z_l$ is. We let $V_i=\alpha^{-1}(T/T^{I_i})$. Notice that for every $i=1,\ldots,n$, the inverse morphism $\alpha^{-1}$ identifies $\bigoplus_{j\neq i}T/T^{I_j}$ with the submodule $T^{I_i}/T^G$ of $T/T^G$. Hence $I_j$ acts trivially on $V_i$, for every $j\neq i$. Moreover, $V_i$ is invariant under $I_i$, just by commutativity of the group $G$. In particular $V_i$ is $G$-invariant, which concludes the proof.

\end{proof}

\begin{definition}\label{def_l_TA}
Let $l\neq p$ be a prime. We say that the semi-abelian scheme $\mathcal A/S$ is \textit{$l$-toric-additive} if any of the equivalent conditions of \cref{A=B} is satisfied.
\end{definition}

\begin{remark}
Clearly, if $\mathcal A/S$ is toric additive then it is $l$-toric additivity for all primes $l\neq p$. If the residue characteristic $p$ is zero, then the converse is also true.
\end{remark}

Also for $l$-toric additivity we have the analogous of \cref{lemma:A_and_A'}:
\begin{lemma}\label{lemma:A_and_A'_l}
For a prime $l\neq p$, $\mathcal A/S$ is $l$-toric additive if and only if $\mathcal A'$ is. 
\end{lemma}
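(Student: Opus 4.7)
The plan is to emulate the proof of \cref{lemma:A_and_A'} verbatim, tensoring everything with $\Z_l$. Concretely, I would choose a polarization $\lambda\colon A \to A'$, extend it uniquely to an isogeny $\mathcal{A}\to \mathcal{A}'$, and take its dual $\lambda'\colon \mathcal{A}'\to \mathcal{A}$. These produce the same commutative diagram of character groups that appears in the proof of \cref{lemma:A_and_A'}, relating the purity maps $p$ and $p'$ via the induced isogenies $\lambda^*,\lambda'^*$ on $X,X'$ and $\lambda_i,\lambda'_i$ on $X_i,X'_i$; I then tensor this diagram with $\Z_l$.

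Because $\mathcal{A}$ and $\mathcal{A}'$ have equal toric ranks at every point of $S$ (so $\mu=\mu'$ and $\mu_i=\mu'_i$), the necessary rank condition $\mu=\sum_i\mu_i$ for $p\otimes\Z_l$ to be an isomorphism holds if and only if the analogous condition holds for $p'\otimes\Z_l$. Assuming it, both $p\otimes\Z_l$ and $p'\otimes\Z_l$ are injective (by \cref{lemma:purity_injective}) maps between free $\Z_l$-modules of the same rank, so each is an isomorphism if and only if its determinant is a unit in $\Z_l$.

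The key observation is that the determinant comparison in \cref{lemma:A_and_A'} --- namely the identity $\det\lambda^*=\pm\det\lambda_i$ (both equal to $\pm(\delta^*)^\mu$), combined with the relation $\det p\cdot\det\lambda_i=\det p'\cdot\det\lambda^*$ coming from the left square of the diagram --- is an equality of integers, independent of $l$. Combining these gives $\det p=\pm\det p'$ in $\Z$, which passes immediately to $\Z_l$; hence $v_l(\det p)=v_l(\det p')$, and $\det(p\otimes\Z_l)\in\Z_l^\times$ if and only if $\det(p'\otimes\Z_l)\in\Z_l^\times$. The main point requiring care is ensuring the $\Z$-identity actually falls out of the computation in \cref{lemma:A_and_A'}; beyond this, no essentially new technical difficulty appears.
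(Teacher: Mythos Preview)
Your plan in the first paragraph is correct and matches the paper exactly: its own proof is the single line ``the same proof as for \cref{lemma:A_and_A'} works also in this case''.

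However, your third paragraph contains a subtle circularity. You want the identity $\det\lambda^*=\pm\det(\lambda_i)$ as an unconditional $\Z$-fact extracted from the proof of \cref{lemma:A_and_A'}, so that the left square alone yields $\det p=\pm\det p'$. But in that proof the equality of $\det(\lambda_i)$ with $\det\lambda^*$ is \emph{derived from the hypothesis} that $p$ is an isomorphism: it is precisely because $p$ conjugates the top composition $\lambda'^*\lambda^*$ to the bottom one $(\lambda'_i)(\lambda_i)$ that the bottom determinants can be computed. Absent that hypothesis, the left square only gives $\det p\cdot\det(\lambda_i)=\det p'\cdot\det\lambda^*$, which is equivalent to, not a proof of, $\det p=\pm\det p'$. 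The remedy is already in your first paragraph: run the full two-square argument of \cref{lemma:A_and_A'} over $\Z_l$, taking ``$p\otimes\Z_l$ is an isomorphism'' as the hypothesis and deducing that $p'\otimes\Z_l$ is one. No detour through an unconditional $\Z$-identity is needed.
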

\begin{proof}
It is enough to show that the map in part c) of \cref{A=B} is an isomorphism if and only if the analogous map for $\mathcal A$ is an isomorphism. The same proof as for \cref{lemma:A_and_A'} works also in this case.
\end{proof}

\subsubsection{Weak toric additivity}
\begin{definition}\label{def_w_TA}
We say that the semi-abelian scheme $\mathcal A/S$ is \textit{weakly toric-additive} if the inequality of toric ranks in \cref{mu_relation}
$$\mu\leq \mu_1+\mu_2+\ldots+\mu_n$$
is an equality.
\end{definition}

\begin{remark}\label{remark:WTA}
It is clear that $\mathcal A$ is weakly toric additive if and only if $\mathcal A'$ is, since the two share the same toric rank function $\mu$; moreover, if $t$ is a geometric point of $S$ and $S'\to S$ the corresponding strict henselization, $\mathcal A_{S'}/S'$ is still weakly toric additivity. This follows easily from the next \cref{weak_and_l} together with \cref{remark_TA} part ii).
\end{remark}

\subsubsection{Relation between the different notions of toric additivity} While it is evident that toric additivity of $\mathcal A/S$ implies $l$-toric additivity for all primes $l\neq p$ (by comparing ranks in part c) of \cref{A=B}), we see that weak toric additivity is a much weaker property:
\begin{lemma}\label{weak_and_l}
The following are equivalent:
\begin{itemize}
\item[i)] $\mathcal A/S$ is weakly toric additive;
\item[ii)] $\mathcal A/S$ is $l$-toric additive for all but finitely many primes $l\neq p$;
\item[iii)] $\mathcal A/S$ is $l$-toric additive for some prime $l\neq p$.
\end{itemize}
\end{lemma}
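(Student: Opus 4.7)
The plan is to exploit the fact that the purity map
\[
p\colon X\hookrightarrow X_1\oplus\cdots\oplus X_n
\]
is an injective homomorphism between \emph{free} finitely generated abelian groups (\cref{lemma:purity_injective}), and that by the equivalence (a)$\Leftrightarrow$(c) of \cref{A=B}, $l$-toric additivity is the condition that $p\otimes\Z_l$ is an isomorphism. Taking ranks, $X$ has rank $\mu$ and $X_1\oplus\cdots\oplus X_n$ has rank $\mu_1+\cdots+\mu_n$, and $p$ being injective forces $\mu\le\sum\mu_i$ (recovering \cref{coro:mu_relation}), with equality iff the cokernel of $p$ is finite.

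First I would prove (iii)$\Rightarrow$(i): if $p\otimes\Z_l$ is an isomorphism for some $l\neq p$, then comparing $\Z_l$-ranks gives $\mu=\sum\mu_i$, so $\mathcal A/S$ is weakly toric additive. Next, (i)$\Rightarrow$(ii): if $\mu=\sum_i\mu_i$, then $p$ is an injection of free abelian groups of the same (finite) rank, hence its cokernel $C:=\coker p$ is a finite abelian group. For any prime $l$ not dividing $|C|$, tensoring the short exact sequence $0\to X\xrightarrow{p}\bigoplus_i X_i\to C\to 0$ with $\Z_l$ yields $C\otimes\Z_l=0$, and since $\mathrm{Tor}^{\Z}_1(C,\Z_l)=0$ ($\Z_l$ being torsion-free), we deduce that $p\otimes\Z_l$ is an isomorphism. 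Only primes dividing $|C|$ (a finite set) together with $p$ itself can fail, so $\mathcal A/S$ is $l$-toric additive for all but finitely many primes $l\neq p$. Finally, (ii)$\Rightarrow$(iii) is immediate since there are infinitely many primes $\neq p$, so the set of $l$ for which $\mathcal A/S$ is $l$-toric additive is non-empty.

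There is no real obstacle: the whole argument reduces to the elementary fact that for an injection of free finitely generated abelian groups of equal rank, tensoring with $\Z_l$ gives an isomorphism for all but finitely many $l$. The only subtlety worth highlighting is that weak toric additivity encodes exactly the equality of ranks, while $l$-toric additivity at a single $l$ encodes the vanishing of the $l$-primary part of $\coker p$ — the two notions differ only by the presence of this finite torsion cokernel, explaining also the example (\ref{example:uniformization}) in the paper where toric additivity fails while $l$-toric additivity holds for all but one prime.
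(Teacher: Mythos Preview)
Your proof is correct and follows essentially the same approach as the paper: the paper also proves i)$\Rightarrow$ii) by noting that the purity map has finite cokernel of some order $N$ so that $l$-toric additivity holds for all $l\nmid pN$, observes ii)$\Rightarrow$iii) is trivial, and deduces iii)$\Rightarrow$i) by comparing ranks via \cref{A=B}. Your version is slightly more detailed (explicit Tor argument, explicit remark on the cokernel interpretation) but the logical skeleton is identical.
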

\begin{proof}
Weak toric additivity implies that the purity map is between free, finitely generated abelian groups of the same rank, hence its cokernel is finite of some order $N$. Then for all primes $l$ not dividing $pN$ we have that $\mathcal A/S$ is $l$-toric additive, which shows that i) implies ii). The implication ii)$\Rightarrow$ iii) is trivial. Finally, we see that iii) implies i) by taking ranks in \cref{A=B}. 
\end{proof}

\subsubsection{Toric additivity and finite flat base change}

\begin{lemma}\label{finiteflat_TA}
Let $r_1,\ldots,r_n\in \Gamma(S,\O_S)$ be such that $D\subset S$ is the zero locus of $r_1\cdot r_2\cdot\ldots\cdot r_n$.
Let $m_1,\ldots,m_n$ be positive integers and $B$ be the $\Gamma(S,\O_S)$-algebra
 \begin{equation}
B=\frac{\Gamma(S,\O_S)[T_1,\ldots,T_n]}{T_1^{m_1}-r_1,\ldots,T_n^{m_n}-r_n}
 \end{equation}
Write $T=\Spec B$ and let $f\colon T\ra S$ be the induced morphism of schemes. let $l\neq p$ be a prime. Then $\mathcal A/S$ is toric-additive, resp. $l$-toric additive, resp. weakly toric additive, if and only if $\mathcal A_T/T$ is toric additive, resp. $l$-toric additive, resp. weakly toric additive.
 \end{lemma}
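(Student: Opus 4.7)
The three notions of toric additivity (plain, $l$-adic, and weak) are all expressed purely in terms of the purity map $p\colon X\to\bigoplus_{i=1}^n X_i$ of \cref{def_purity_map}: $p$ is an isomorphism; $p\otimes\Z_l$ is an isomorphism (via condition c) of \cref{A=B}); or $p$ relates free groups of the same rank. The strategy is to show that this purity map is literally preserved by the base change $f\colon T\to S$, so that all three equivalences follow at once.

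First I would check that $T=\Spec B$ is itself a regular, strictly henselian local scheme, and that $(D\times_S T)_{\mathrm{red}}$ is the strict normal crossing divisor with components $D'_i=V(T_i)$, in natural bijection with the $D_i$. The fibre of $T\to S$ over the closed point is $\Spec k[T_1,\dots,T_n]/(T_1^{m_1},\dots,T_n^{m_n})$, which is local Artinian with residue field $k$; combined with finiteness of $T/S$ and strict henselianity of $R$, this forces $B$ to be local and strictly henselian with residue field $k$. Regularity follows because replacing each $r_i$ in a regular system of parameters of $R$ by $T_i$ yields a regular system of parameters of $B$.

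Next I would verify that the character groups and specialization maps are invariant under $f$. At the closed point $t\in T$ one has $k(t)=k$, hence $\mathcal A_t=\mathcal A_s$ and the character group of the maximal torus is still $X$. At the generic point $\zeta'_i$ of $D'_i$, the fibre $\mathcal A_{\zeta'_i}$ is obtained from $\mathcal A_{\zeta_i}$ by the residue field extension $k(\zeta_i)\to k(\zeta'_i)$, which preserves the character group $X_i$ of the (geometric) maximal torus. The specialization map $X\to X_i$ is computed via the schematic closure in $\mathcal A$ of the maximal subtorus at $\zeta_i$, restricted to the closed fibre (cf.\ \cref{sec:relation_tori}); since $f$ is flat, this closure pulls back to the schematic closure of the maximal subtorus at $\zeta'_i$ inside $\mathcal A_T$, and taking fibres at the closed points yields the same inclusion of tori at $s=t$. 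Consequently the purity map of $\mathcal A_T/T$ coincides with that of $\mathcal A/S$, and each of the three equivalences follows immediately.

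The only mildly delicate points are the strict henselianity and regularity of $B$, and the compatibility of ``maximal subtorus of a fibre'' together with ``schematic closure'' with the flat but ramified base change $f$; both are formalities independent of the specific exponents $m_i$, and no further argument depending on $l$, on the $m_i$, or on $\mathrm{char}\,k$ is needed.
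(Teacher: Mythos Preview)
Your proposal is correct and follows the same approach as the paper, which gives a one-line proof: after noting that $T$ is regular and strictly local, the paper asserts that ``the statement follows easily from the fact that $f^{-1}(D)_{\mathrm{red}}\to D$ is an isomorphism.'' Your argument is a careful unpacking of this (slightly imprecise) assertion---verifying regularity and strict henselianity of $B$, the bijection between components of $D$ and of $f^{-1}(D)_{\mathrm{red}}$, and the invariance of the character groups and specialization maps under the base change---and is in fact more precise than the paper's sketch.
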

 \begin{proof}
Notice that $T$ is a regular strictly local scheme, so all notions of toric additivity for $\mathcal A_T/T$ make sense. The statement follows easily from the fact that $f^{-1}(D)_{red}\ra D$ is an isomorphism.
 \end{proof}

\subsection{Global definition of toric additivity}
We have defined toric-additivity over a strictly local base. We now remove this hypotheses and consider hypotheses as in the beginning of \cref{section3}.

\begin{definition}\label{TAglobal}
We say that $\mathcal A/S$ is \textit{toric additive} at a geometric point $s$ of $S$, if the base change $\mathcal A\otimes_S\Spec\O^{sh}_{S,s}$ to the strict henselization at $s$ is toric additive as in \cref{def_TA}. We say that $\mathcal A/S$ is \textit{toric additive} if it is so at all geometric points $s$ of $S$.

We define $\mathcal A/S$ to be $l$-\textit{toric additive} for some $l$ invertible on $S$, resp. \textit{weak toric additive}, in an analogous way.
\end{definition}

\begin{remark}\label{remark:TA_etale_local}
It follows from the definition that toric additivity, $l$-toric additivity, weak toric additivity are property \'etale-local on the target. 
\end{remark}

We actually have the stronger statement:
\begin{lemma} \label{TAsmooth}
Toric-additivity, $l$-toric additivity and weak toric additivity are local on the target for the smooth topology.
\end{lemma}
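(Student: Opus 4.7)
My plan is to reduce smooth-locality to the \'etale-local statement already recorded in \cref{remark:TA_etale_local} by factoring the smooth cover. Given a smooth surjective morphism $f \colon S' \to S$, the local structure theorem for smooth morphisms provides, for each point $s' \in S'$, a Zariski-open neighborhood $U' \subset S'$ and a factorization $U' \xrightarrow{g} \mathbb{A}^n_S \to S$ with $g$ \'etale (for some $n$ depending on $s'$). Since for the \'etale morphism $g$ the strict henselizations at matched geometric points are canonically isomorphic, the pointwise notion of (weak, $l$-, or plain) toric additivity transfers verbatim from $U'$ to $\mathbb{A}^n_S$; the various $U'$ cover $S'$, so the whole problem reduces to proving the lemma when $f$ is the projection $\mathbb{A}^n_S \to S$.

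At this point the key observation is that the purity map on $\mathbb{A}^n_S$ at a geometric point $(s,a)$ agrees canonically with the purity map on $S$ at $s$. Indeed, the components of $D \times_S \mathbb{A}^n_S$ passing through $(s,a)$ are in natural bijection with those of $D$ passing through $s$ (namely $D_i \mapsto D_i \times_S \mathbb{A}^n_S$); the fiber $\mathcal{A}_{(s,a)}$ is just $\mathcal{A}_s$ after a base field extension; and since the maximal tori in all relevant fibers are split (\cref{lemma:tori_are_split}), their character groups are unchanged under this extension, as are the character groups at the generic points of $D_i \times_S \mathbb{A}^n_S$ (which lie over the generic points $\zeta_i$ of $D_i$). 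Matching of purity maps then gives matching of the three properties at $(s,a)$ and at $s$, both for the purity map itself, its $\otimes \mathbb{Z}_l$ version, and the inequality of toric ranks.

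To conclude, observe that $\mathbb{A}^n_S \to S$ is surjective on geometric points (e.g.\ via the zero section) and every geometric point of $\mathbb{A}^n_S$ projects to one of $S$, so each of the three properties holds at every geometric point of $\mathbb{A}^n_S$ iff it holds at every geometric point of $S$. Combined with the reduction to $\mathbb{A}^n_S$, this proves the lemma.

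The only place needing care is the identification of character groups under base extension: the statement requires a genuinely \emph{canonical} isomorphism, not just an abstract match of free abelian groups of the same rank, in order to get the purity maps to agree on the nose. The splitness of the tori in the strictly henselian setting (\cref{lemma:tori_are_split}) makes this identification immediate via pullback of the constant character sheaves, and is really the only technical input beyond the \'etale-local statement and the local structure of smooth morphisms.
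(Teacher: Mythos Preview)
Your proof is correct and follows essentially the same approach as the paper: reduce via the local structure theorem for smooth morphisms and \cref{remark:TA_etale_local} to the projection $\mathbb{A}^n_S \to S$, then observe that toric additivity (in all three flavors) is preserved under this base change. You give more detail than the paper on why the purity maps over $\mathbb{A}^n_S$ and $S$ agree canonically (via splitness of the tori), which the paper simply asserts.
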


 \begin{proof}
First, notice that for any $n\geq 1$, $\mathcal A/S$ is toric additive if and only if its base change via $\mathbb A^n_S\to S$ is. Now, given $f\colon T\ra S$ smooth and surjective, and $\bar t$ a geometric point lying over $t\in T$ and over $f(t)\in S$, there is a commutative diagram
\begin{center}
\begin{tikzcd}
T \ar[d] & W \ar[l]\ar[r] & \mathbb A^N_{V} \ar[d]\\
S & &\ar[ll] V
\end{tikzcd}
\end{center} 
where $W$ is an open neighbourhood of $t$ and $V$ an open neighbourhood of $f(t)$, and the map $W\mapsto \mathbb A^N_V$ is \'etale. Now the statement follows from \cref{remark:TA_etale_local}.

\begin{comment}
the base change $D\times_ST$ is still a normal crossing divisor. Let $x$ be a geometric point of $T$ and call $f_x$ the induced morphism 
$$T*:=\Spec \O^{sh}_{T,x}\ra S*:=\Spec \O^{sh}_{S,f(x)}.$$ The preimage of $D*=D\times_SS*$ via $f_x$ is a normal crossing divisor $E*$, the irreducible components $E*_1,\ldots,E*_n$ of which are the preimages of the irreducible components $D*_1,\ldots,D*_n$ of $D*$. Let $Y$ (resp. $X$) be the character group at $x$ (resp. at $f(x)$) and $Y_1,\ldots, Y_n$ (resp. $X_1,\ldots,X_n$) the character groups at the generic points of $E*_i$ (resp. $D*_i$). There are natural isomorphism $X\ra Y$ and $X_i\ra Y_i$ compatible with the purity maps. This concludes the proof.
\end{comment}
 \end{proof}
 
\begin{lemma}\label{TA_open_condition}
Toric-additivity, $l$-toric additivity and weak toric additivity of $\mathcal A/S$ are open conditions on $S$.
\end{lemma}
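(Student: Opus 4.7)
The plan is to show that, for each of the three notions, the locus $V\subset S$ where the property holds is (a) stable under generization and (b) locally constructible. Since $S$ is locally noetherian, any locally constructible generization-stable subset is open, and the conclusion will follow.

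For (a), let $s\in V$ and let $t\in S$ with $s\in\overline{\{t\}}$. Since each notion is \'etale-local on $S$ by \cref{remark:TA_etale_local}, and since strict henselization is transitive (so that $\mathcal O^{sh}_{S,t}=\mathcal O^{sh}_{\tilde S,\tilde t}$ for $\tilde S=\Spec\mathcal O^{sh}_{S,s}$ and $\tilde t$ a lift of $t$), we may assume that $S$ is strictly local with closed point $s$ and that $t$ is an arbitrary geometric point. I would then invoke the compatibility of purity maps with specialization recorded after \cref{def_purity_map}: in the commutative diagram
\begin{center}
\begin{tikzcd}
X_s \arrow[r, "p_s"] \arrow[d, twoheadrightarrow, "sp"'] & \bigoplus_{i=1}^{n} X_i \arrow[d, twoheadrightarrow, "\mathrm{res}"] \\
X_t \arrow[r, "p_t"] & \bigoplus_{i\in J_t} X_i
\end{tikzcd}
\end{center}
the left vertical is surjective by construction of the specialization map and the right vertical is the projection, hence surjective as $J_t\subset J_s=\{1,\ldots,n\}$. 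If $p_s$ is an isomorphism, then $p_t\circ sp=\mathrm{res}\circ p_s$ is surjective, so $p_t$ is surjective; being injective by \cref{lemma:purity_injective}, it is an isomorphism, proving generization-stability for toric additivity. Tensoring the diagram with $\Z_l$ and applying part (c) of \cref{A=B} handles $l$-toric additivity, while comparing ranks handles weak toric additivity (using that $p_s$ being injective with $\rk X_s=\sum\rk X_i$ forces $\rk (\mathrm{res}\circ p_s)=\rk\bigoplus_{i\in J_t}X_i$, whence $\rk X_t\geq \sum_{i\in J_t}\mu(\zeta_i)$, and combining with \cref{coro:mu_relation}).

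For (b), I would use the natural stratification of $(S,D)$. \'Etale-locally on $S$ the divisor $D$ becomes strict normal crossing, with globally defined components $D_1,\ldots,D_n$, and $S$ decomposes into locally closed strata $S_J=\bigl(\bigcap_{i\in J}D_i\bigr)\setminus\bigl(\bigcup_{i\notin J}D_i\bigr)$ for $J\subset\{1,\ldots,n\}$. The remark following \cref{coro:mu_relation} tells us that $\mu$ is constant on each $S_J$, which immediately implies that weak toric additivity depends only on the stratum, so that its locus is a union of strata. For toric additivity and $l$-toric additivity, the analogous statement follows once one knows that the character groups $X_t$ and their specialization maps to the fixed $X_i$, $i\in J$, assemble into a local system on $S_J$ (this is because the maximal split subtori of the fibres of $\mathcal A^0/S_J$ correspond to a sheaf of finite type of multiplicative type, which is \'etale-locally constant); after passing to a connected \'etale cover trivializing this local system, the purity map is constant. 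Hence the condition that $p_t$ (resp.\ $p_t\otimes\Z_l$) be an isomorphism is constant on each connected component of each stratum, and the locus in question is a union of such components, which is locally constructible.

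The main obstacle is the claim in (b) that the character groups and the purity maps are locally constant along strata; everything else is essentially a diagram chase or a rank comparison. This input relies on using the representability of the functor of multiplicative-type subgroup schemes (as recalled at the end of \cref{subsection_toric_part}) to propagate the data of the maximal torus of $\mathcal A_t$ along each stratum.
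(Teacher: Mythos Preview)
Your strategy---generization-stability plus local constructibility---is sound and leads to a correct proof, but it diverges from the paper's route and your part (b) is more laborious than necessary.

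The paper does not separate the two steps. After passing to an \'etale neighbourhood where $D$ is strict normal crossing and $s$ lies on every component, it shows directly that \emph{every} geometric point $t$ of this neighbourhood is toric additive. The mechanism is a factorization through the generic point $\zeta$ of the stratum containing $t$: writing $Y,X$ for the character groups at $t,\zeta$ respectively, one has
\[
Y \xrightarrow{\ \mathrm{sp}\ } X \xrightarrow{\ p_\zeta\ } \textstyle\bigoplus_{i\in J_t} Y_i,
\]
where the composite is the purity map $p_t$. Since $\mathrm{sp}$ is surjective and $p_t$ is injective, $\mathrm{sp}$ is an isomorphism; hence $p_t$ is an isomorphism iff $p_\zeta$ is. Toric additivity at $\zeta$ follows from your step (a) (since $\zeta$ generizes $s$), and this finishes the argument.

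Note that this same factorization gives you (b) for free, with no local-system machinery: the identification $Y\cong X$ shows the purity map is \emph{constant} (not just locally constant) on each stratum, so the toric-additive locus is a union of strata. Your appeal to the representability of the functor of multiplicative-type subgroups and the assembly of a local system is correct in spirit but unnecessary; it also leaves implicit the compatibility of the specialization maps $X_t\to X_i$ as $t$ varies, which is exactly what the factorization above makes transparent. The paper's argument thus buys simplicity; your decomposition buys a cleaner conceptual picture (open $=$ generization-stable $+$ constructible) at the cost of a heavier justification of (b).
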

\begin{proof}
Suppose that $\mathcal A/S$ is toric-additive at a geometric point $s$. It is enough to show that $\mathcal A/S$ is toric-additive on an \'etale neighbourhood of $s$, since \'etale morphisms are open. We may therefore assume that $D$ is a strict normal crossing divisor and that $s$ belongs to all irreducible components $D_1,\ldots, D_n$ of $D$. 

Let $t$ be another geometric point of $S$; we want to show that $\mathcal A/S$ is toric-additive at $t$. This is true if $t\not\in D$, so we may assume without loss of generality that $t$ belongs to $D_1,\ldots,D_m$ for some $1\leq m\leq n$. Let $\zeta$ be a geometric point lying over the generic point of $D_1\cap D_2\cap \ldots\cap D_m$; write $S_{\zeta},S_t,S_s$ for the spectra of the strict henselizations of $S$ at $\zeta,t,s$ respectively. The morphism $S_{\zeta}\ra S$ factors via $S_{\zeta}\ra S_s$, the strict henselization of $S_s$ at $\zeta$; hence, by \cref{remark:prop_TA}, $\mathcal A_{S_{\zeta}}/S_{\zeta}$ is toric-additive. 

Consider now the natural map $S_{\zeta}\ra S_t$.  Let $Y$ be the character group at $t$, $X$ the character group at $\zeta$, and $Y_1,\ldots,Y_m$ the character groups at the generic points of $D_1,\ldots,D_m$. We have natural maps
$$Y\to X\to Y_1\oplus\ldots\oplus Y_m$$
where the first arrow is the surjective specialization map, the second arrow is the purity map for $S_{\zeta}$ and the composition is the purity map for $S_t$. By toric-additivity of $\mathcal A/S_{\zeta}$, the second arrow is an isomorphism. As the composition of the two maps is injective, we see that $Y\to X$ is injective, hence an isomorphism. Hence the purity map for $S_t$ is an isomorphism, as we wished to show.

The same proof works for $l$-toric additivity when taking character groups tensored with $\mathbb Z_l$. For weak toric additivity, we can restrict to an open neighbourhood of $s$ such that the residue characteristic of $s$ is the only non-invertible prime, and then apply \ref{weak_and_l}.
\end{proof}

\begin{lemma}
Let $B/U$ be an abelian subgroup scheme of $A/U$ with a semiabelian prolongation $\mathcal A\into \mathcal B$ over $S$. If $\mathcal A/S$ is toric additive, resp. $l$-toric additive for some invertible prime $l$, resp. weakly toric additive, then so is $\mathcal B/S$.
\end{lemma}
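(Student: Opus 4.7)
The plan is to reduce immediately to the strictly local case (so we may assume $S$ is the spectrum of a strictly henselian local ring with closed point $s$ and residue characteristic $p$), since each notion of toric additivity is defined pointwise on $S$ via base change to strict henselizations. After this reduction, the inclusion of semiabelian schemes restricts at every point $x\in S$ to an inclusion $\mathcal{B}_x\hookrightarrow\mathcal{A}_x$, and the maximal subtorus $T^B_x$ of $\mathcal{B}_x$ is automatically contained in the maximal subtorus $T^A_x$ of $\mathcal{A}_x$ (any subtorus of a semiabelian variety lies in its maximal torus). By \cref{lemma:tori_are_split} both tori are split, so passing to characters yields a surjection $X^A_x\twoheadrightarrow X^B_x$.

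Next, I would verify that these surjections are natural with respect to specialization: for a specialization $x\rightsquigarrow x'$, the schematic closure in $\mathcal{B}$ of $T^B_x$ sits inside the schematic closure in $\mathcal{A}$ of $T^A_x$, so the specialization maps on character groups commute with the surjections $X^A\twoheadrightarrow X^B$. Applying this at the closed point $s$ and at the generic points $\zeta_1,\ldots,\zeta_n$ of the components of $D$ yields a commutative diagram
\begin{equation*}
\begin{tikzcd}
X^A \arrow[r, "p^A"] \arrow[d, two heads] & \bigoplus_{i=1}^n X^A_i \arrow[d, two heads]\\
X^B \arrow[r, "p^B"] & \bigoplus_{i=1}^n X^B_i
\end{tikzcd}
\end{equation*}
in which both vertical arrows are surjective. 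A simple diagram chase shows that surjectivity of $p^A$ implies surjectivity of $p^B$; combined with the general injectivity of purity maps (\cref{lemma:purity_injective}), this proves that toric additivity of $\mathcal{A}/S$ implies toric additivity of $\mathcal{B}/S$.

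For the $l$-toric additive variant, I would apply exactly the same argument after tensoring the whole diagram with $\mathbb{Z}_l$, using the characterization c) of \cref{A=B}: the surjections $X^A_{\bullet}\twoheadrightarrow X^B_{\bullet}$ remain surjective after $-\otimes\mathbb{Z}_l$, so $p^A\otimes\mathbb{Z}_l$ an isomorphism forces $p^B\otimes\mathbb{Z}_l$ to be an isomorphism. For weak toric additivity of $\mathcal{B}/S$, I would invoke \cref{weak_and_l}: weak toric additivity of $\mathcal{A}/S$ yields some prime $l\neq p$ for which $\mathcal{A}/S$ is $l$-toric additive, hence so is $\mathcal{B}/S$ by the previous sentence, hence $\mathcal{B}/S$ is weakly toric additive.

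The only genuinely delicate point is the naturality of the maximal-torus construction under the inclusion $\mathcal{B}\hookrightarrow\mathcal{A}$ at non-closed points of $S$; this requires a small argument using schematic closure of the split tori (as in \cref{sec:relation_tori}) to see that the inclusions $T^B_x\hookrightarrow T^A_x$ assemble into a morphism of locally constant sheaves of character groups, and in particular are compatible with the specialization maps used to define the purity map. Everything else is a direct diagram chase.
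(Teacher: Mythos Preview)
Your proposal is correct and follows essentially the same approach as the paper: reduce to the strictly local case, build the commutative square of purity maps with surjective vertical arrows coming from the inclusion of maximal tori, and conclude surjectivity of $p^B$ from that of $p^A$. The paper's proof is terser and does not spell out the naturality of the maximal-torus surjections with respect to specialization (the ``delicate point'' you flag), so your write-up is in fact more careful on that score; for the weakly toric additive case the paper simply says ``tensor with $\mathbb{Z}_l$'', which amounts to the same reduction via \cref{weak_and_l} that you use.
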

\begin{proof}
We can reduce to $S$ strictly henselian, with closed point $s$. Let $X,Y$ be the character groups of the maximal tori of $A_s$ and $B_s$; and similarly let $X_i,Y_i$ be the character groups at the generic points of $D$. We have a commutative diagram of purity maps
\begin{center}
\begin{tikzcd}
X \ar[r]\ar[d] & Y\ar[d]\\
\bigoplus X_i\ar[r] & \bigoplus Y_i
\end{tikzcd}
\end{center}
where the vertical maps are the purity maps. As the purity map $X\to \bigoplus X_i$ is surjective, and as well all maps $X_i\to Y_i$ induced by the injection $\mathcal A\to \mathcal B$, we conclude that $Y\to\bigoplus Y_i$ is surjective. This shows the part of the statement concerning toric additivity. The other two parts follow by considering character groups tensored with $\mathbb Z_l$.
\end{proof}

\subsubsection{Three examples}
Let $k$ be an algebraically closed field, $S=\Spec k[[u_1,u_2]]$, and let $D$ be the vanishing locus of $u_1u_2$. 
\begin{example}
Consider the nodal projective curve $\mathcal E\subset \mathbb P^2_{S}$ given by the equation 
$$Y^2Z=X^3-X^2Z-u_1u_2Z^3.$$
The restriction $\mathcal E_U/U$, with its section at infinity, is an elliptic curve, and therefore naturally identified with its jacobian $\Pic^0_{\mathcal E_U/U}$; the smooth locus $\mathcal E^{sm}/S$ has a unique $S$-group scheme structure extending the one of $\mathcal E_U/U$, and is a semi-abelian scheme.

Let $\zeta_1,\zeta_2$ be the generic points of $D_1=\{u_1=0\}$ and $D_2=\{u_2=0\}$ respectively, and let $s$ be the closed point $\{u_1=0,u_2=0\}$. The fibres of $\mathcal E^{sm}$ over $\zeta_1$, $\zeta_2$, $s$ are all tori of dimension $1$. It follows that $\mathcal E^{sm}$ is not weakly toric additive.

\end{example}
 \begin{example}
Consider for $i=1,2$ the nodal projective curve $\mathcal E_i\subset \mathbb P^2_{S}$ given by the equation 
$$Y^2Z=X^3-X^2Z-u_iZ^3.$$
In this case the restriction of $\mathcal E_i$ to $S\setminus D_i$, with its infinity section, is an elliptic curve. The smooth locus $\mathcal E_i^{sm}/S$ has a unique structure of semiabelian scheme extending the group structure on $\mathcal E_{i,U}$. Let $\mu_i$ the toric-rank function for $\mathcal E_i/S$. Then $\mu_i(s)=1$ for $i=1,2$, while
$$\mu_i(\zeta_j)=
\begin{cases}
1 & \mbox{if } i=j \\
0 & \mbox{if } i\neq j
\end{cases} 
$$
Hence each $\mathcal E_i/S$ is toric additive, and so is the $S$-semiabelian scheme of relative dimension two $\mathcal E_1\times_S\mathcal E_2$.
\end{example}

\begin{example}
Consider the nodal curve $C$ given by taking $\mathbb P^1_{\C}$, four closed points $P_1,P_2,Q_1,Q_2$ and identifying $P_1$ and $P_2$ in a node $P$ and $Q_1$ and $Q_2$ in a node $Q$. Let $\mathcal C/S$ be its universal deformation as a stable curve. Then $S$ is local of dimension $3g(C)-3=3$, with closed point $s\in S$ and there is a normal crossing divisor $D\subset S$ such that $\mathcal C/S$ is smooth over $S\setminus D$. There are two irreducible components $D_1, D_2\subset D$ each one corresponding to the locus on $S$ where one of the two singular points of $C$ is preserved. The fibre of $\mathcal C/S$ over any of the two generic point $\zeta_1,\zeta_2$ of $D$ is given by pinching a smooth genus $1$ curve to itself. Hence, $\mu(\zeta_1)=\mu(\zeta_2)=1$, while $\mu(s)=2$, so $\mathcal C/S$ is weakly toric additive. In view of \cref{TA_and_weak_curves}, it is actually toric additive. 
\end{example}
 
\subsection{Comparison with toric additivity of jacobians}\label{subs:case_of_curves}
The notion of toric additivity was originally introduced in \citep{Orecchia} in the case where $\mathcal A/S$ is $\Pic^0_{\mathcal C/S}$ for a nodal curve $\mathcal C/S$ smooth over $U=S\setminus D$. In the case of jacobians, no distinction between toric additivity and weak toric additivity is made, as they are equivalent:

\begin{lemma}[\citep{Orecchia}, 2.5]\label{TA_and_weak_curves}
Let $\mathcal C/S$ be a nodal curve, smooth over $U$, and let $\mathcal A=\Pic^0_{\mathcal C/S}$. Then $\mathcal A/S$ is toric additive if and only if it is weakly toric additive.
\end{lemma}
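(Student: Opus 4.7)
The forward implication is immediate: the purity map is injective by \cref{lemma:purity_injective}, and between free abelian groups of finite rank an injection is an isomorphism if and only if the ranks coincide. For the converse, my plan is to reduce to $S$ strictly henselian (both notions are pointwise by \cref{TAglobal}) and exploit the classical identification of character groups of jacobians with first homology of dual graphs. Let $\Gamma_s$ denote the dual graph of $\mathcal C_s$, with edge set $E$ given by the nodes of $\mathcal C_s$ and vertex set $V$; then the character group of the maximal torus of $\Pic^0(\mathcal C_s)$ is $X = H_1(\Gamma_s,\mathbb Z)\subseteq \mathbb Z^E$. Each node $e$ persists along the non-empty family of components $\{D_i: i\in J_e\}$; setting $E_i=\{e: i\in J_e\}$ and $F_i=E\setminus E_i$, the fibre $\mathcal C_{\zeta_i}$ has dual graph $\Gamma_{\zeta_i}$ obtained from $\Gamma_s$ by contracting the edges in $F_i$, so $X_i = H_1(\Gamma_{\zeta_i},\mathbb Z)\subseteq \mathbb Z^{E_i}$; the specialization $X\to X_i$ is just restriction of cycles to $E_i$, and the purity map is $R\colon \gamma\mapsto (\gamma|_{E_i})_i$.

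Since $\mathcal C$ is smooth over $U$ we have $\bigcup_i E_i = E$, so the ambient restriction $\phi\colon \mathbb Z^E\to \bigoplus_i \mathbb Z^{E_i}$ is injective; a direct computation identifies its cokernel with $\bigoplus_{e\in E}\mathbb Z^{|J_e|-1}$, which is torsion-free. The key step is to fit $R$ into the commutative diagram with exact rows
\[
\begin{tikzcd}
0 \arrow[r] & H_1(\Gamma_s) \arrow[r] \arrow[d, "R"] & \mathbb Z^E \arrow[r, "\partial_s"] \arrow[d, "\phi"] & \im(\partial_s) \arrow[r] \arrow[d, "\sigma"] & 0 \\
0 \arrow[r] & \bigoplus_i H_1(\Gamma_{\zeta_i}) \arrow[r] & \bigoplus_i \mathbb Z^{E_i} \arrow[r] & \bigoplus_i \im(\partial_{\zeta_i}) \arrow[r] & 0
\end{tikzcd}
\]
and apply the snake lemma; since $\ker R = \ker\phi = 0$, one obtains the exact sequence
\[ 0\to \ker\sigma\to \coker R\to \coker\phi\to \coker\sigma\to 0. \]

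The decisive observation is that $\ker\sigma$ lies inside $\im(\partial_s)\subseteq \mathbb Z^V$ and is therefore torsion-free, whereas weak toric additivity says $\rk X=\sum_i \rk X_i$ and so forces $\coker R$ to be finite. A torsion-free subgroup of a finite group vanishes, so $\ker\sigma=0$; hence $\coker R$ embeds in the torsion-free group $\coker\phi$, and being finite it must be zero. Thus $R$ is an isomorphism and $\mathcal A/S$ is toric additive. The main obstacle is really just setting up the dictionary between algebraic and combinatorial data (the identification $X = H_1(\Gamma_s,\mathbb Z)$ together with the description of specialization as edge-contraction); once in place, the concreteness of $H_1$ of a graph — in contrast to Tate modules of arbitrary abelian schemes — rules out exactly the kind of torsion cokernel responsible for the failure of weak $\Rightarrow$ strong toric additivity in \cref{example:uniformization}.
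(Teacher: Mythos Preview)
Your argument is correct and follows the same route as the paper: both hinge on the fact that for jacobians of nodal curves the cokernel of the purity map is torsion-free --- the paper simply cites this as \citep[2.4]{Orecchia}, whereas you supply a self-contained proof via the identification $X=H_1(\Gamma_s,\Z)$ and the snake lemma. Incidentally, your exact sequence $0\to\ker\sigma\to\coker R\to\coker\phi$ already exhibits $\coker R$ as an extension of torsion-free groups and hence torsion-free unconditionally, so the intermediate step of first forcing $\ker\sigma=0$ is unnecessary.
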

The lemma follows immediately from the fact that for jacobians of curves, the cokernel of the purity map is always torsion-free (\citep{Orecchia} 2.4).

We deduce:
\begin{corollary}\label{coro:curves_l}
Let $\mathcal C/S$ be a nodal curve, smooth over $U$, and let $\mathcal A=\Pic^0_{\mathcal C/S}$. The following are equivalent:
\begin{itemize}
\item[i)] $\mathcal A/S$ is weakly toric additive; 
\item[ii)] for some prime $l\neq p$, $\mathcal A/S$ is $l$-toric additive; 
\item[iii)] for every prime $l\neq p$, $\mathcal A/S$ is $l$-toric-additive.
\item[iv)] $\mathcal A/S$ is toric additive. 
\end{itemize}
\end{corollary}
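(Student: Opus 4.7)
The proof plan is almost entirely bookkeeping: everything needed is already available in the preceding results, and the content of the corollary is just to piece them together.

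First, the equivalence of (i), (ii), and (iii) is not special to jacobians at all: it is precisely the content of \cref{weak_and_l}, applied to $\mathcal A=\Pic^0_{\mathcal C/S}$. So I would open the proof by observing that (i) $\Leftrightarrow$ (ii) $\Leftrightarrow$ (iii) follows immediately from \cref{weak_and_l}, since $\mathcal A/S$ is a semiabelian scheme whose generic fibre is an abelian scheme (namely the jacobian of the smooth curve $\mathcal C_U/U$), which is the setting of that lemma.

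Next, the equivalence (i) $\Leftrightarrow$ (iv) is the content of \cref{TA_and_weak_curves}, which applies precisely because $\mathcal A$ is assumed to be of the form $\Pic^0_{\mathcal C/S}$ for a nodal curve. The one tiny step I would spell out is that toric additivity trivially implies weak toric additivity (the purity map being an isomorphism forces equality of ranks), so only the nontrivial direction (weak toric additive $\Rightarrow$ toric additive) requires the input of \cref{TA_and_weak_curves}; that lemma rests on the fact, recalled right before, that the cokernel of the purity map for jacobians of nodal curves is always torsion-free, so an equality of ranks forces surjectivity.

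Combining these two inputs gives the cycle of implications (iv) $\Rightarrow$ (i) $\Leftrightarrow$ (ii) $\Leftrightarrow$ (iii) and (i) $\Rightarrow$ (iv). There is no genuine obstacle in this corollary: the real work has already been done in \cref{weak_and_l} (which uses the finiteness of the cokernel of the purity map to pass between weak and $l$-toric additivity) and in \cref{TA_and_weak_curves} (whose proof relies on the torsion-freeness statement \citep[2.4]{Orecchia} specific to jacobians of nodal curves). So the only thing I would write is a three-line deduction citing the two lemmas, together with the trivial observation (iv) $\Rightarrow$ (i).
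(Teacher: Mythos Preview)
Your overall strategy is right, and the paper itself treats this as an immediate deduction from \cref{weak_and_l} and \cref{TA_and_weak_curves}. However, one claim you make is actually false: the equivalence (i) $\Leftrightarrow$ (ii) $\Leftrightarrow$ (iii) is \emph{not} the content of \cref{weak_and_l}, and it \emph{is} special to jacobians. \cref{weak_and_l} only shows that weak toric additivity is equivalent to $l$-toric additivity for \emph{some} prime, and to $l$-toric additivity for \emph{all but finitely many} primes; it does not give $l$-toric additivity for \emph{every} prime $l\neq p$. Indeed, \cref{example:uniformization} exhibits a semiabelian scheme that is weakly toric additive but fails to be $2$-toric additive, so (i) $\Rightarrow$ (iii) is false in general.

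The fix is immediate and already implicit in your ingredients: you get (i) $\Rightarrow$ (iv) from \cref{TA_and_weak_curves}, and (iv) $\Rightarrow$ (iii) is the general remark that toric additivity implies $l$-toric additivity for every $l\neq p$. Then (iii) $\Rightarrow$ (ii) is trivial and (ii) $\Rightarrow$ (i) is \cref{weak_and_l}, closing the cycle. So just reorganize: the route to (iii) must pass through (iv), and hence through the jacobian-specific input of \cref{TA_and_weak_curves}; it cannot come from \cref{weak_and_l} alone.
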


The motivation for the introduction of toric additivity is that it serves as a criterion for existence of N\'eron models, by the following theorem:
\begin{theorem}[\citep{Orecchia}, 4.13]
Let $S$ be a regular, excellent scheme, $D\subset S$ a normal crossing divisor, $\mathcal C/S$ a nodal curve smooth over $U=S\setminus D$. Then the equivalent conditions of \cref{coro:curves_l} are equivalent to the existence of a N\'eron model for $\Pic^0_{\mathcal C_U/U}$ over $S$.
\end{theorem}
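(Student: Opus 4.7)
My plan is to prove the two implications separately. At the outset, \cref{TAsmooth,fpqclocal} let me reduce to the case where $S$ is strictly henselian with closed point $s$ and $D = D_1 \cup \ldots \cup D_n$ is a strict normal crossing divisor through $s$. The main advantage in the jacobian case is the combinatorial nature of the data: the character groups $X, X_1, \ldots, X_n$ at $s, \zeta_1, \ldots, \zeta_n$ are the first homology groups of the dual graphs of $\mathcal C_s, \mathcal C_{\zeta_1}, \ldots, \mathcal C_{\zeta_n}$, and the purity map is realized by edge-contractions.

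For the forward direction, assume $\Pic^0_{\mathcal C/S}$ is toric additive; by \cref{TA_and_weak_curves} this is the equality $\mu(s) = \sum_i \mu(\zeta_i)$, so $X$ splits as the direct sum of the $X_i$. I would use this splitting to construct a candidate $\mathcal N$ by suitably modifying $\Pic^0_{\mathcal C/S}$: over each one-dimensional localization $\Spec \mathcal O_{S,\zeta_i}^{sh}$, \cref{dim1} furnishes a N\'eron model whose component group is described by the monodromy pairing $\phi_i$ (\cref{thm:grp_comps}), and the direct-sum splitting of $X$ tells me exactly which components to add at $s$ so the result is a smooth $S$-group algebraic space. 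To upgrade this from a candidate to a genuine N\'eron model, I would verify the universal property against smooth $T \to S$ by reducing via the valuative criterion to the case of traits, where extensibility is encoded precisely in the component-group data just built in.

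For the converse, assume $\mathcal N/S$ is a N\'eron model of $\mathcal A = \Pic^0_{\mathcal C_U/U}$. By \cref{A=N0}, $\mathcal N^0 = \Pic^0_{\mathcal C/S}$, so $\mathcal N$ restricts over each $\Spec \mathcal O_{S,\zeta_i}^{sh}$ to the N\'eron model of $\mathcal A_{\zeta_i}$, and the monodromy pairings $\phi_i$ are defined. To show weak toric additivity (equivalent to toric additivity by \cref{coro:curves_l}), I would argue by contradiction: if $\mu(s) < \sum_i \mu(\zeta_i)$, then for some $\ell \neq p$ the purity map tensored with $\mathbb Z_\ell$ fails to be surjective (\cref{weak_and_l}). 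I would pull back $\mathcal N$ along a tamely ramified trait $Z \to S$ whose closed point maps to $s$ and which meets each $D_i$ with multiplicity one; such a $Z$ exists after an Abhyankar-type cover, using excellence of $S$. The resulting $\mathcal N_Z$ is a N\'eron model over $Z$, and its $\ell$-part of the component group admits two expressions: one computed directly from its own monodromy pairing via \cref{thm:grp_comps}, and one obtained by composing the $\phi_i$ via \cref{prop:monodromy_transversal}. Matching the two forces surjectivity of the tensored purity map, contradicting the choice of $\ell$.

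The hardest step, I expect, is the construction in the forward direction: producing a global smooth group algebraic space $\mathcal N$ whose closed fibre carries exactly the component group prescribed by the direct-sum decomposition. Gluing from the one-dimensional traits is tempting but subtle, since the closed-fibre component group is not the product of the $\zeta_i$-fibre component groups --- it is governed by the full monodromy pairing on $X = \bigoplus_i X_i$. Toric additivity is exactly what ensures this pairing splits as a block-diagonal sum along the decomposition, making the candidate $\mathcal N$ both effective and group-like, and its failure is exactly what drives the contradiction in the converse.
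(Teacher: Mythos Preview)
This theorem is stated in the paper only as a citation of \citep[4.13]{Orecchia}; the present paper does not supply a proof. So there is no ``paper's own proof'' to compare against, and the argument in the cited work is curve-specific (using the combinatorics of dual graphs and the Picard functor for nodal curves) rather than the abelian-scheme machinery developed here. That said, your outline has a real gap worth naming.

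In the converse direction you write: ``The resulting $\mathcal N_Z$ is a N\'eron model over $Z$.'' This is precisely the step that is \emph{not} automatic. A N\'eron model over $S$ need not restrict to a N\'eron model along an arbitrary (even transversal) trait $Z\to S$; this is exactly why the paper introduces the separate notion of test-N\'eron model (\cref{def:test_NM}) and why \cref{thm:converse} must \emph{assume} that $\mathcal N$ is simultaneously a test-N\'eron model. The paper even remarks explicitly that for jacobians this extra property is established only in a forthcoming article. Without that input, your comparison of component groups over $Z$ has no starting point. Moreover, the ``two expressions'' you propose to match are in fact the same expression: \cref{prop:monodromy_transversal} says that the monodromy pairing of $\mathcal A_Z$ \emph{is} the composition through the purity maps, so no contradiction arises from that alone. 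The actual mechanism (visible in the proof of \cref{thm:converse}) is subtler: one compares $\coker(A^t\Psi A)$, the component group over the trait, with the containment $\Phi\subseteq\bigcap_i\ker(\widehat\psi_i)$ from \cref{prop:grp_comps}, and the equality of these forces $A$ to be an isomorphism via a positivity argument.

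Your forward direction is closer in spirit to the test-N\'eron construction of \cref{main_thm}, but note that the cited theorem carries no characteristic-zero hypothesis, whereas the passage from test-N\'eron to N\'eron in this paper (\cref{test=Neron}) does. So even granting your construction of a candidate $\mathcal N$, the verification that it satisfies the full N\'eron mapping property would require a different, jacobian-specific argument---which is indeed what the cited paper supplies.
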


We deduce the following corollary:
\begin{corollary}
Let $\mathcal C, \mathcal D$ be nodal curves over $S$, smooth over $U$, such that the jacobians $\mathcal P=\Pic^0_{\mathcal C_U/U}$ and $\mathcal Q=\Pic^0_{\mathcal D_U/U}$ are isogenous. Then $\mathcal P$ admits a N\'eron model over $S$ if and only if $\mathcal Q$ does.
\end{corollary}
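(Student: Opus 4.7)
The plan is to combine the existence theorem of \citep{Orecchia} stated just above with the observation that, for jacobians of nodal curves, toric additivity is invariant under $U$-isogeny. By the cited theorem, $\mathcal P$ (resp.\ $\mathcal Q$) admits a N\'eron model over $S$ if and only if it is toric additive, so everything comes down to showing that the two jacobians are toric additive simultaneously. By \cref{coro:curves_l}, for a jacobian of a nodal curve, toric additivity is equivalent to $l$-toric additivity for any single prime $l$ invertible on $S$, and it therefore suffices to find one such prime $l$ for which $l$-toric additivity can be transferred across the given isogeny.

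I would first reduce to the case where $S$ is regular strictly henselian local with closed point $s$ of residue characteristic $p\geq 0$, using \cref{TAsmooth}. Let $f\colon \mathcal P_U\to\mathcal Q_U$ be the given isogeny, of some degree $N$. Since morphisms between semiabelian schemes admit unique semiabelian prolongations across a normal crossing divisor, $f$ extends uniquely to an $S$-isogeny $\til f\colon \mathcal P\to\mathcal Q$ of the $S$-semiabelian schemes $\Pic^0_{\mathcal C/S}$ and $\Pic^0_{\mathcal D/S}$. Fibrewise at any point $t\in S$, the induced isogeny between maximal subtori dualizes to an injection $X_{\mathcal Q,t}\hookrightarrow X_{\mathcal P,t}$ of character groups whose cokernel is annihilated by some power of $N$.

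Specializing at the closed point $s$ and at the generic points $\zeta_1,\ldots,\zeta_n$ of $D$, these maps assemble into a commutative square whose horizontal arrows are the purity maps for $\mathcal P$ and for $\mathcal Q$ respectively, and whose vertical arrows are injections with cokernels of exponent dividing a power of $N$. Fixing any prime $l\neq p$ coprime to $N$ and tensoring with $\Z_l$ turns the vertical maps into isomorphisms, so the upper purity map becomes an isomorphism precisely when the lower one does. By the equivalence of parts (a) and (c) of \cref{A=B}, this means that $\mathcal P$ is $l$-toric additive if and only if $\mathcal Q$ is, and \cref{coro:curves_l} then promotes the equivalence back to full toric additivity for both jacobians.

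The only delicate point is the first sentence of the second paragraph, namely the unique extension of $f$ from $U$ to $S$ and the verification that it induces isogenies between the character groups of the maximal subtori at every point of $S$. Both facts are standard consequences of uniqueness of semiabelian prolongations of morphisms together with the canonicity of the toric subquotient of a semiabelian group scheme. Everything else reduces to a routine diagram chase on free, finitely generated abelian groups.
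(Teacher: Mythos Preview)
Your argument is correct and follows the same overall strategy as the paper: reduce to a strictly henselian base, pick a prime $l$ coprime to both the residue characteristic and the degree of the isogeny, transfer $l$-toric additivity along the isogeny, and then invoke \cref{coro:curves_l} and the main theorem of \citep{Orecchia}. The difference lies in \emph{which} characterization of $l$-toric additivity from \cref{A=B} you use to make the transfer.

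The paper works purely on the generic fibre via condition~(a): an isogeny $f\colon P\to Q$ of degree prime to $l$ induces an isomorphism of Galois modules $T_lP(K^s)\to T_lQ(K^s)$, and condition~$\bigstar(l)$ is visibly preserved under such an isomorphism. This is a one-line argument requiring no extension of $f$ to $S$ and no discussion of character groups. You instead use condition~(c): you extend $f$ to an isogeny of the semiabelian models, pass to the induced maps on character groups of maximal subtori at each point, and compare the two purity maps after tensoring with $\Z_l$. This is correct but does more work than necessary; the extension of $f$ across $D$ and the compatibility of the induced character-group maps with specialization are genuine (if standard) inputs that the paper's route simply bypasses. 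Your approach has the minor advantage of making the geometric content of the isogeny-invariance explicit, while the paper's approach is shorter and stays entirely on the level of the Tate module.
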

\begin{proof}
It is enough to check the statement when $S$ is local, strictly henselian. Let $f\colon P\to Q$ be an isogeny, and pick a prime $l$ different from the residue characteristic of $S$ and not dividing the degree of $f$. Then the induced map $f\colon T_lP(K^s)\to T_lQ(K^s)$ is an isomorphism of Galois-modules. In particular $P$ is $l$-toric additive if and only if $Q$ is.
\end{proof}

\subsection{An example}\label{example:uniformization}
It is natural to ask whether \cref{TA_and_weak_curves} holds for abelian schemes as well. Surprisingly, the answer is negative. We present a counterexample where the purity map has torsion cokernel:
\begin{example}
Let $k=\mathbb C$ and $S$ the spectrum of the local ring of dimension two $R=k[[u,v]]$, which is complete with respect to its maximal ideal $\mathfrak m=(u,v)$. We denote by $K$ its fraction field $k((u,v))$. We consider the split torus over $S$ of rank two $\widetilde G=\mathbb G_{m,S}^2=\Spec R[x,x^{-1}]\times \Spec R[y,y^{-1}]$, with character group $X=\mathbb Z^2_S$ generated by $x$ and $y$. We choose a set of periods 
$$Y=\langle a,b \rangle \subset \widetilde G(K)=K^{\times}\times K^{\times}\;\; a=(u^4,u^2), b=(u^2,uv).$$
The groups of periods $Y$ admits a principal polarization
\begin{align*}
\lambda\colon Y &\to X \\
a &\mapsto x\\ b &\mapsto y
\end{align*}
Using Mumford's theory of relatively complete models, the data of $\widetilde G$ and $Y$ yield a principally polarized semiabelian scheme $G/S$, with restriction $G_U/U$ to $U=S\setminus\{uv=0\}$ an abelian scheme and with $G_k=\widetilde G_k$. 

We are going to show that the $G/S$ is weakly toric additive but not toric additive. The polarization $\lambda$ identifies $X$ with the set of periods $Y$, which is in turn naturally identified with the character group of the maximal torus of the fibre of $G^\vee/S$ over the closed point.

Consider, for every $(a,b)\in \Z_{\geq 1}^2$, the trait $Z_{a,b}=\Spec k[[t]]$, and the morphism $f_{a,b}\colon Z\to S$, given by $u\to t^a, v\to t^b$. We denote by $\phi_{a,b}\colon X \to X$ the monodromy pairing of the semiabelian scheme $f_{a,b}^{-1}G$ over $Z$. The latter is determined by the set of periods $Y_{a,b}\subset \tilde G(k((t)))$, $Y_{a,b}=\langle (t^{4a},t^{2a}),(t^{2a},t^{a+b})\rangle$. It follows that \begin{equation}
\phi_{a,b}=\begin{pmatrix}
4a & 2a \\
2a & a+b
\end{pmatrix}
\end{equation}

On the other hand, $\phi_{a,b}$ can also be written as

\begin{center}
\begin{tikzcd}
X\ar[r, "p"] & X_1\oplus X_2\ar[r,"\psi_{a,b}"] & X_1\oplus X_2 \ar[r,"p^t"] & X
\end{tikzcd}
\end{center}

where: $X_1\cong \Z,X_2\cong \Z$ are the character groups of the maximal subtori of the fibres of $G/S$ over the generic points $\zeta_1,\zeta_2$ of $\{u=0\}$ and $\{v=0\}$; $p$ and $p^t$ are the purity map and its transpose; the central arrow is
$$
\psi_{a,b}=\begin{pmatrix}
a\phi_1 & 0 \\ 0 & b\phi_2
\end{pmatrix}
$$
with $\phi_1,\phi_2$ the monodromy pairings of the restriction of $G/S$ to the local rings at $\zeta_1$ and $\zeta_2$.  

Letting 
$$ 
p=\begin{pmatrix}
A & B\\ 
C & D
\end{pmatrix}
$$
we find

$$
\psi_{a,b}=\begin{pmatrix}
aA^2\phi_1+bC^2\phi_2 & aAB\phi_1+bCD\phi_2\\
aAB\phi_1+bCD\phi_2 & aB^2\phi_1+bD^2\phi_2
\end{pmatrix}=
\begin{pmatrix}
4a & 2a \\
2a & a+b
\end{pmatrix}
$$
A calculation shows that $A=2,B=1,C=0,D=1$, that is, 
$$p=\begin{pmatrix}2 & 1 \\ 0 & 1\end{pmatrix}.$$

We see immediately that the $p$ is not surjective, and has finite cokernel of order $2$. This shows that $G/S$ is $l$-toric additive for all primes $l\neq 2$, and in particular is weakly toric additive, but it is not $2$-toric additive. 
\end{example}

\newpage
\section{N\'eron models of abelian schemes in characteristic zero}\label{section4}

In this section, we consider a locally noetherian, regular base scheme $S$, a normal crossing divisor $D$ on $S$, an abelian scheme $A/U$ of relative dimension $d$ and a semi-abelian scheme $\mathcal A/S$ with a given isomorphism $\mathcal A\times_SU\ra A$. We will retain notations used in the previous sections.

\subsection{Test-N\'eron models}
\begin{definition}\label{def:test_NM}
Let $\mathcal N/S$ be a smooth, separated group algebraic space of finite type with an isomorphism $\mathcal N\times_SU\ra A$; we say that it is a \textit{test-N\'eron model} for $A$ over $S$ if, for every strictly henselian trait $Z$ and morphism $Z\ra S$ transversal to $D$ (\cref{trait_transversal}), the pullback $\mathcal N\times_SZ$ is the N\'eron model of its generic fibre. 
\end{definition}

It is clear that the property of being a test-N\'eron model is smooth-local on the base, and is also preserved by taking the localization at a point of the base, or the strict henselization at a geometric point. 

\begin{lemma}\label{lemma:test_fibconn}
The fibrewise-connected component of identity $\mathcal N^0$ of $\mathcal N$ is semi-abelian and is naturally identified with $\mathcal A/S$.
\end{lemma}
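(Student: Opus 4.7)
The strategy is to produce a canonical $S$-morphism $\phi\colon \mathcal A\to\mathcal N$ extending the tautological identification $\mathcal A_U=\mathcal N_U=A$, and then show that $\phi$ identifies $\mathcal A$ isomorphically with the open subspace $\mathcal N^0\subset\mathcal N$. Since the assertion is \'etale-local on $S$, I would first reduce to the case where $S$ is strictly henselian local and $D=D_1\cup\ldots\cup D_n$ is a strict normal crossing divisor.

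For the construction of $\phi$, I would apply Weil's extension theorem to the smooth (hence regular) $S$-scheme $\mathcal A$ and the smooth separated $S$-group space $\mathcal N$: it suffices to extend the identity on $\mathcal A_U$ at each codimension-one point of $\mathcal A\setminus \mathcal A_U$, namely the generic point of $\mathcal A_{D_i}$ for $i=1,\ldots,n$. For each such $i$, the strict henselization $Z_i$ of $S$ at the generic point $\zeta_i$ of $D_i$ is a strictly henselian trait transversal to $D$, so by hypothesis $\mathcal N_{Z_i}$ is the N\'eron model of $A_{Z_i}$; the universal property of N\'eron models applied to the smooth $Z_i$-group space $\mathcal A_{Z_i}$ yields a morphism $\mathcal A_{Z_i}\to\mathcal N_{Z_i}$, providing the desired local extension. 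That the resulting $\phi$ is a group homomorphism follows because the two maps $\mathcal A\times_S\mathcal A\to\mathcal N$ given by $\phi\circ m_{\mathcal A}$ and $m_{\mathcal N}\circ(\phi\times\phi)$ agree on the schematically dense open $\mathcal A_U\times_U\mathcal A_U$ while $\mathcal N$ is separated and $\mathcal A\times_S\mathcal A$ is reduced. Since $\mathcal A$ has geometrically connected (semiabelian) fibres, $\phi$ factors through the open subspace $\mathcal N^0$.

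The main step is to show that $\phi\colon\mathcal A\to\mathcal N^0$ is an isomorphism. For any geometric point $t$ of $S$ lying on precisely $D_{i_1},\ldots,D_{i_k}$, pick a regular system of parameters $r_1,\ldots,r_d$ of $\mathcal O_{S,t}^{sh}$ with $r_{i_j}$ cutting out $D_{i_j}$, and let $Z_t$ be the closed subscheme of $\Spec\mathcal O_{S,t}^{sh}$ cut out by $r_{i_1}-r_{i_2},\ldots,r_{i_{k-1}}-r_{i_k}$ together with $r_l$ for $l\notin\{i_1,\ldots,i_k\}$. Then $Z_t$ is a strictly henselian trait transversal to $D$ whose closed point lies above $t$, so $\mathcal N_{Z_t}$ is a N\'eron model and by \cref{A=N0} the map $\phi_{Z_t}$ identifies $\mathcal A_{Z_t}$ with $\mathcal N^0_{Z_t}$; in particular, $\phi$ is an isomorphism on the geometric fibre at $t$, and $\mathcal N^0_t$ is semiabelian by faithfully flat descent. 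The conclusion now follows from the general principle that a homomorphism of smooth $S$-group spaces of the same relative dimension which is an isomorphism on every geometric fibre is itself an isomorphism: such a morphism is flat by the fibre criterion, fibrewise \'etale, hence \'etale, and being radicial and surjective on points is therefore an isomorphism.

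The main obstacle I anticipate is the first step: invoking Weil's extension theorem in the setting of algebraic spaces rather than schemes, which requires some care to check the hypotheses (the standard proofs for schemes use schematic closure arguments that adapt, but need verification). Once $\phi$ is in hand, the remaining arguments reduce to the construction of enough transversal traits, which is straightforward in local coordinates.
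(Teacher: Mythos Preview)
Your argument is correct, but the paper's proof is considerably shorter because it bypasses the explicit construction of $\phi$ altogether. The paper argues as follows: for any point $s\in S$, choose a transversal trait $Z\to S$ hitting $s$ (essentially the same construction you carry out in your main step); then $\mathcal N_Z$ is a N\'eron model by hypothesis, so $\mathcal N^0_Z$ is semiabelian by \cref{A=N0}, and in particular the fibre $\mathcal N^0_s$ is semiabelian. Having shown that $\mathcal N^0/S$ is a semiabelian scheme restricting to $A$ over $U$, the paper simply invokes \emph{uniqueness of semiabelian prolongations} (as used elsewhere in the paper, e.g.\ from \cite[I.2.7]{faltings1990degeneration}) to obtain the canonical identification $\mathcal N^0\cong\mathcal A$.

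Your route constructs the comparison map $\phi$ by hand via Weil extension and then checks it is an isomorphism fibre by fibre. This is more self-contained in that it does not black-box the uniqueness of semiabelian extensions, but it introduces the algebraic-space technicality you yourself flag, and it duplicates work: the same transversal-trait argument you use to show $\phi_t$ is an isomorphism already shows $\mathcal N^0_t$ is semiabelian, at which point the uniqueness theorem finishes immediately. In effect, the paper recognises that once each fibre of $\mathcal N^0$ is known to be semiabelian, the global identification with $\mathcal A$ is a one-line citation rather than a construction.
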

\begin{proof}
Take a point $s\in S$ and a morphism $Z\to S$ from a strictly henselian trait, transversal to $D$, mapping the closed point of $Z$ to $s$. Then $\mathcal N_Z/Z$ is a N\'eron model of its generic fibre and in particular $\mathcal N^0_Z/Z$ is semiabelian. Hence $\mathcal N^0_s$ is semiabelian. By uniqueness of semi-abelian extensions, $\mathcal N^0$ is naturally idenfitied with $\mathcal A$. 
\end{proof}

\subsubsection{Uniqueness} Just as N\'eron models are unique up to unique isomorphism, the same holds for test-N\'eron models:
\begin{proposition}\label{test-NM_unique}
If $\mathcal M/S$ and $ \mathcal N/S$ are two test-N\'eron models for $A$, there exists a unique isomorphism $\mathcal M\ra \mathcal N$ that restricts to the natural isomorphism $\mathcal M^0\ra \mathcal N^0$ of \cref{lemma:test_fibconn}.
\end{proposition}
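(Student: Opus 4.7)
My strategy is to construct, by a symmetric argument, a morphism $f\colon \mathcal M\to \mathcal N$ whose restriction to $\mathcal M_U$ is the given identification with $A=\mathcal N_U$; once such an $f$ exists in both directions, the two compositions agree with the identities on the schematically dense opens $\mathcal M_U$ and $\mathcal N_U$ and so must equal them by separatedness of the targets, yielding the desired isomorphism. The same density-plus-separatedness argument forces $f$ to restrict to the canonical isomorphism $\mathcal M^0\isomto \mathcal N^0$ coming from \cref{lemma:test_fibconn}. Since morphisms into an algebraic space form an \'etale sheaf and the test-N\'eron property is \'etale local on the base, I may reduce at the outset to the case where $S$ is strictly local and $D=D_1\cup\ldots\cup D_n$ is a strict normal crossing divisor.

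The heart of the construction is to show that the rational $S$-map $\mathcal M\dashrightarrow \mathcal N$ induced by the identity on $A$ is defined at every codimension-one point of $\mathcal M$. By smoothness of $\mathcal M/S$ and the fibre-dimension formula, the codimension-one points of $\mathcal M$ not already in $\mathcal M_U$ are precisely the generic points of the fibres $\mathcal M_{\zeta_i}$, where $\zeta_i$ is the generic point of $D_i$. For each such $i$, the natural morphism $Z_i:=\Spec\,\mathcal O^{sh}_{S,\zeta_i}\to S$ is a strictly henselian trait transversal to $D$ (only $D_i$ passes through $\zeta_i$), so by the test-N\'eron property $\mathcal N_{Z_i}/Z_i$ is a genuine N\'eron model of its generic fibre. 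The base change $\mathcal M_{Z_i}$ is a smooth algebraic space over $Z_i$; the tautological identification of $\mathcal M_{Z_i,U}$ and $\mathcal N_{Z_i,U}$ with $A\times_U Z_{i,U}$ provides a morphism from the generic fibre of $\mathcal M_{Z_i}$ to $\mathcal N_{Z_i}$, and the N\'eron property of $\mathcal N_{Z_i}$ (extended from smooth schemes to smooth algebraic spaces by taking a scheme-valued \'etale cover and descending the resulting extension) produces $\mathcal M_{Z_i}\to \mathcal N_{Z_i}\to \mathcal N$. This defines the rational map in a neighbourhood of the generic point of $\mathcal M_{\zeta_i}$, as required.

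Finally, I would invoke Weil's extension theorem for rational maps from a smooth algebraic space into a smooth group algebraic space: a rational map defined at every codimension-one point extends uniquely to a morphism. This produces the desired $f\colon \mathcal M\to \mathcal N$, and the symmetric construction supplies its inverse. The main obstacle is precisely this last step: the classical Weil extension is stated for schemes (e.g.\ \citep{BLR}, 4.4/1), and its promotion to algebraic spaces requires passing to a smooth scheme-presentation of $\mathcal M$, performing the extension there, and then descending the resulting morphism to $\mathcal M$ using separatedness of $\mathcal N$ and the group law on the target; the bookkeeping for these descent compatibilities is the one technical point that requires real care.
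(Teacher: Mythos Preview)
Your approach has a genuine gap at the final step. What you call ``Weil's extension theorem'' is not the statement you invoke. The theorem in \cite[4.4/1]{BLR} says that an $S$-rational map from a smooth $S$-scheme into a smooth separated $S$-group scheme is \emph{automatically} defined on an open whose complement has codimension $\geq 2$; it does \emph{not} say that a rational map defined at all codimension-one points extends to a morphism everywhere. These are different assertions, and the second is false in general for targets with disconnected fibres. For a toy illustration, take $S=\mathbb A^2_k$, $V=S\setminus\{0\}$, and let $G=S\sqcup V$ be the \'etale separated $S$-group scheme which is $\underline{\Z/2}$ over $V$ and trivial over the origin: the non-identity section $V\to G$ does not extend over $0$. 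In your setting the obstruction is the same in nature: $\mathcal N/\mathcal N^0$ is \'etale and separated but \emph{a priori} only quasi-finite over $S$, so a section of $\mathcal N$ over a codimension-$2$ complement can land in a component that simply fails to exist over the missing locus.

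Notice also that your intermediate step---using the test-N\'eron property at the generic points $\zeta_i$ of $D$ to define the map at the codimension-one points of $\mathcal M$---is, modulo the algebraic-space bookkeeping, already a consequence of \cite[4.4/1]{BLR}. So as written, your argument never actually uses the test-N\'eron hypothesis at a point of $S$ of codimension $\geq 2$, which is precisely where the difficulty lies. The paper's proof confronts this head-on: it proceeds by induction on $\dim S$, reduces to the strictly local case, obtains the isomorphism over $V=S\setminus\{s\}$ by induction, and then---this is the substantial part---uses transversal traits through the closed point $s$ together with the monodromy-pairing description of the component group (\cref{thm:grp_comps}, \cref{prop:monodromy_transversal}) to produce explicit torsion sections of $\mathcal M$ and $\mathcal N$ over all of $S$ that hit every component over $s$. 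These sections let one build the isomorphism by hand over a neighbourhood of the closed fibre and glue it to the inductively given $f_V$. That construction is exactly what is needed to rule out the ``component falling off the edge'' phenomenon, and there is no shortcut via a general extension principle.
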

\begin{proof}
The uniqueness is automatic, because $\mathcal N$ is separated and $\mathcal M^0$ is schematically dense in $\mathcal M$. For the existence part, we proceed by induction on the dimension of the base. In the case of $\dim S=1$, let $S^{sh}$ be a strict henselization of the trait $S$. The base change of a test-N\'eron model to $S^{sh}$ is a N\'eron model. By \cref{fpqclocal}, $\mathcal M$ and $\mathcal N$ are themselves N\'eron models over $S$, and therefore there exists an isomorphism $\mathcal M\ra \mathcal N$. 

Now let $\dim S=M$ and assume the statement is true for $\dim S<M$. We claim that we can reduce to the case of a strictly henselian local base $S$. Suppose that for every geometric point $s$ of $S$ we can construct an isomorphism $f_s\colon\mathcal M_{X_s} \ra \mathcal N_{X_s}$ where $X_s$ is the spectrum of the strict henselization at $s$. Then we can spread out $f_s$ to an isomorphism $f'\colon \mathcal M_{S'}\ra \mathcal N_{S'}$ for some \'etale cover $S'$ of $S$. Let $S'':=S'\times_SS'$, $p_1,p_2\colon S''\ra S'$ be the two projections and $q\colon S''\ra S$. Because test-N\'eron models are stable under \'etale base change, $q^*\mathcal M$ and $q^*\mathcal N$ are test-N\'eron models. The two isomorphisms $p_1^*f,p_2^*f\colon q^*\mathcal M\ra q^*\mathcal N$ necessarily coincide, thus $f$ descends to an isomorphism $\mathcal M\ra \mathcal N$, which proves our claim.

Let then $S$ be strictly local, of dimension $M$, with closed point $s$ and fraction field $K$. The open $V=S\setminus\{s\}$ has dimension $M-1$; by inductive hypothesis there is a unique isomorphism $f_V\colon \mathcal M_V\ra \mathcal N_V$. We would like to extend it to the whole of $S$. 

Let $Z$ be a regular, closed subscheme of $S$ of dimension $1$, transversal to $D$. The existence of such $Z\subset S$ is easily checked. We call $L$ the fraction field of $\Gamma (Z,\mathcal O_Z)$. As $Z$ is a strictly henselian trait, the pullbacks of $\mathcal M$ and $\mathcal N$ to $Z$ are N\'eron models of their generic fibres $\mathcal M_L$ and $\mathcal N_L$; the isomorphism $\mathcal M_L\ra \mathcal N_L$ induces a unique isomorphism $\alpha\colon\mathcal M_Z\ra \mathcal N_Z$. Now let $\underline{\Phi}$ and $\underline{\Psi}$ be the \'etale $S$-group schemes of components of $\mathcal M$ and $\mathcal N$; and let $\Phi$ and $\Psi$ be the finite abelian groups $\underline \Phi_s(k)$ and $\underline \Psi_s(k)$ respectively, where $k$ is the (separably closed) residue field of $S$. The restriction of $\alpha$ to the fibre over $s$ induces an isomorphism $\alpha_Z\colon \Phi\ra \Psi$, a priori depending on the choice of the trait $Z$.

We show that the isomorphism $\Phi\ra \Psi$ is independent of the choice of $Z\subset S$. We may reduce to the case where $\Phi$ and $\Psi$ are isomorphic to $\Z/q^m\Z$, for some prime $q$ and $m\geq 0$.

Let $\phi_Z\colon X'\ra (X)^{\vee}$ be the homomorphism induced by the monodromy pairing (cf. \cref{thm:monodromy_pairing}) for $\mathcal M^0_{Z}$, and $\psi_Z\colon Y'\ra Y^{\vee}$ the one for $\mathcal N^0_Z$. Then $\Phi$ (resp. $\Psi$) is naturally identified with the kernel of $\phi_Z\otimes \Z/q^m\Z$ (resp. $\psi_Z\otimes\Z/q^m\Z$) by \cref{thm:grp_comps}. The natural isomorphism $\mathcal M^0\ra \mathcal N^0$ induces an isomorphism between the duals $\mathcal N'^0\to \mathcal M'^0$ and therefore isomorphisms $X'\ra Y'$ and $Y\ra X$, compatible with the monodromy pairings; we get a commutative diagram
\begin{center}
\begin{tikzcd}
\Phi \arrow[r]\arrow[d, "\alpha_Z"] & X'\otimes\Z/q^m\Z \arrow[rr, "\phi_Z\otimes \Z/q^m\Z"]\arrow[d]&& X^{\vee}\otimes\Z/q^m\Z \arrow[d]\\
\Psi \arrow[r] & Y'\otimes\Z/q^m\Z \arrow[rr, "\psi_Z\otimes\Z/q^m\Z"] && Y^{\vee}\otimes\Z/q^m\Z
\end{tikzcd}
\end{center}

We only need to show that the maps $\phi_Z$ and $\psi_Z$ do not depend on the choice of transversal trait $Z$. This is indeed true by \cref{prop:monodromy_transversal}. Hence the isomorphism $\Phi\ra \Psi$ is independent of the choice of $Z\subset S$, hence canonical. For this reason, we will write $\Phi$ for both groups $\Phi$ and $\Psi$.

Let now $q$ be a prime and $m$ the biggest integer for which $q^m$ divides the order of $\Phi$. We write $\underline\Phi_K$ (resp. $X'_K$) for the constant group scheme over $K$ with value $\Phi$ (resp. $X'$) and $\underline\Phi_K[q^m]$ for its $q$-torsion part. By \cref{subs:char_grps} and \cref{thm:grp_comps} we have an injective map 
$$\underline\Phi_K[q^m]\hookrightarrow X'_K\otimes\Z/q^m\Z=\frac{A_K[q^m]}{T_qA^f\otimes\Z/q^m\Z}$$ 
As $X'_K\cong \underline{\Z}^{\mu}_K$ for some $\mu\geq 0$, there exists a section of the surjective map of $q$-divisible groups
$$T_qA_K\ra \frac{T_qA_K}{T_qA_K^f}=X'_K\otimes \Z_q.$$
We pick such a section and obtain a closed immersion of finite group schemes 
$\underline\Phi_K[q^m]\ra A_K[q^m]$; notice that $\underline \Phi_K[q^m]$ is \'etale over $K$ (in fact, a disjoint union of copies of $\Spec K$), while $A_K[q^m]$ is not if $p=q$. Repeating the argument for all primes dividing $N=\ord (\Phi)$, we obtain a closed immersion of finite group schemes $\underline \Phi_K\to A_K[N]$. We denote by $B\subset A_K[N]$ the closed subscheme defined by the closed immersion.

Consider the schematic closure $\mathcal B$ of $B$ inside $\mathcal M$; we claim that $\mathcal B\ra S$ is a disjoint union of open immersions. First, $\mathcal B$ is contained in $\mathcal M[N]$ which is quasi-finite and separated, hence it is itself quasi-finite and separated. By Zariski's main theorem, there exists a factorization $\mathcal B\ra T\ra S$, with the first map an open immersion and the second a finite morphism. Because $T_K$ is finite over $K$, it is discrete, hence $B=\mathcal B_K$ is open and closed in $T_K$. The schematic closure $Y$ of $B_K$ inside $T$ is still finite over $S$; moreover $\mathcal B\ra Y$ is an open immersion and $\mathcal B_K=Y_K$. Therefore we may replace $T$ by $Y$ and simply assume that the factorization $\mathcal B\ra T\ra S$ is such that $\mathcal B_K=T_K$. Now, $$\mathcal B_K=\sqcup_{\phi\in \Phi}\Spec K.$$ Because $S$ is regular, every finite birational morphism to $S$ is an isomorphism; hence $$T=\sqcup_{\phi\in \Phi}S.$$ We deduce that $\mathcal B=\sqcup_{\phi\in \Phi}V_{\phi}$ is indeed a disjoint union of open subschemes of $S$.

Next, we claim that $\mathcal B\ra S$ is finite, i.e. $V_{\phi}=S$ for all $\phi\in \Phi$. As $\mathcal M[N]$ is finite over $U$, the restriction of the open immersion $V_{\phi}\ra S$ to $U$ is itself finite, hence an isomorphism. In particular, it is given by some section $U\ra A$, which restricts to a section $\Spec L\ra A_{\Spec L}$ over the generic point of the trait $Z$. As $\mathcal M_Z$ is a N\'eron model of its generic fibre, this section extends to a section $Z\ra \mathcal M_Z$. This latter section is contained in the schematic closure of $V_{\phi}$, which is $V_{\phi}$ itself. This shows that $V_{\phi}\ra S$ is surjective, and in particular an isomorphism, as claimed.

Therefore, $\mathcal B$ is simply given by a disjoint union $\sqcup_{\phi\in\Phi}b_{\phi}$ of torsion sections $b_{\phi}\colon S\ra \mathcal M$, and the restriction $\mathcal B_s$ is naturally isomorphic to $\underline \Phi_s$.

We construct $\mathcal B'$ as the schematic closure of $B$ inside $\mathcal N$; similarly, we write $\mathcal B'=\sqcup_{\phi\in\Phi}b'_{\phi}$.

Now, recall that $\mathcal A$ is naturally identified with the fibrewise connected components $\mathcal M^0$ and $\mathcal N^0$. Let $\mathcal H=\bigcup_{\phi\in\Phi} (b_{\phi}+\mathcal A)\subseteq \mathcal M$. It is an open subgroup $S$-scheme of $\mathcal M$, and on the closed fibre we have $\mathcal H_s=\mathcal M_s$, since $\mathcal B_s=\underline \Phi_s$. In particular, $\mathcal M=\mathcal M_V\cup \mathcal H$ (recall that $V=S\setminus\{s\}$). Writing similarly $\mathcal H'=\bigcup_{\phi\in\Phi}(b'_{\phi}+\mathcal A)\subseteq \mathcal N$, we have $\mathcal N=\mathcal N_V\cup \mathcal H'$. 

Now, we construct an isomorphism $\mathcal H\ra \mathcal H'$ simply by sending $b_{\phi}$ to $b'_{\phi}$. To obtain an isomorphism $\mathcal M\ra \mathcal N$ it is enough to show that the given isomorphism $f_V\colon \mathcal M_V\ra \mathcal N_V$ and $\mathcal H\ra \mathcal H'$ agree on the intersection $\mathcal M_V\cap \mathcal H=\mathcal H_V$. This is clear: indeed, the isomorphism $f_V\colon\mathcal N_V\ra \mathcal N'_V$ is the unique isomorphism extending the identity morphisms between fibrewise connected components $\mathcal A_V\ra \mathcal A_V$, and it sends the schematic closure of $B$ inside $\mathcal M_V$ to the schematic closure of $B$ inside $\mathcal N_V$; that is, it restricts to an isomorphism $\mathcal B_V\ra \mathcal B'_V$ sending $b_{\phi}$ to $b'_{\phi}$. This concludes the proof.
\end{proof}

\subsubsection{Existence} We pass now to show that under the assumption of toric additivity, test-N\'eron models exist.
\begin{theorem} \label{main_thm}
Suppose that $\mathcal A/S$ is toric-additive. Then there exists a test-N\'eron model $\mathcal N/S$ for $A$.
\end{theorem}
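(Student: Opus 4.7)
The plan is to reduce to $S$ strictly henselian local, identify the unique candidate component group $\Phi$ at the closed point via the monodromy pairings and toric additivity, and then construct $\mathcal N$ by gluing copies of $\mathcal A$ indexed by $\Phi$ along torsion sections. First, I would combine smooth descent of the kind used in \cref{desc_smooth} with the uniqueness of test-N\'eron models (\cref{test-NM_unique}), which automatically produces canonical descent data on any smooth cover, to reduce to the case where $S$ is strictly henselian local with closed point $s$. In this local setting, toric additivity yields canonical decompositions $X' = \bigoplus_{i=1}^n X'_i$ and $X^{\vee} = \bigoplus_{i=1}^n X_i^{\vee}$, and I would define $\phi_s := \bigoplus_{i=1}^n \phi_i \colon X' \to X^{\vee}$ out of the dimension-one monodromy pairings $\phi_i \colon X'_i \to X_i^{\vee}$ at the generic points $\zeta_i$ of $D_i$. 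By \cref{prop:monodromy_transversal}, for every transversal trait $Z \to S$ whose closed point maps to $s$ the monodromy pairing of $\mathcal A_Z$ equals $\phi_s$; hence \cref{thm:grp_comps} forces the component group of the N\'eron model of $A_{Z_K}$ to be $\Phi := \coker \phi_s$, independent of $Z$. This $\Phi$ is therefore the only possible group of components at $s$ of a test-N\'eron model.

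Following the embedding used in the uniqueness proof, I would lift $\Phi \subseteq \ker \widehat{\phi_s} \subseteq X' \otimes \Q/\Z$ prime-by-prime via splittings of the surjections $T_qA_K \twoheadrightarrow T_qA_K / T_qA_K^f$ to obtain a closed immersion $\Phi \hookrightarrow A_K[N]$ with $N := |\Phi|$. Taking schematic closures inside the finite $U$-group scheme $A_U[N]$ then yields $U$-sections $b_\phi \colon U \to A$ for each $\phi \in \Phi$. I would build $\mathcal N$ by gluing copies $\mathcal A_\phi$ of $\mathcal A$ indexed by $\phi \in \Phi$, with identifications over $U$ given by translation by $b_\phi - b_\psi \in A(U)$, suitably sheafified to collapse the coincidences that occur near intermediate points $t$ where $b_\phi$ and $b_\psi$ differ by a section in $\mathcal A(\O^{sh}_{S,t})$. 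This produces a smooth separated $S$-algebraic space of finite type with identity component $\mathcal A$. Equipping $\mathcal N$ with an $S$-group structure amounts to checking that for every $\phi,\psi \in \Phi$ the difference $b_{\phi+\psi} - b_\phi - b_\psi \in A(U)$ extends to an element of $\mathcal A(S)$; by separatedness this reduces to a verification at each codimension-one point, where it follows from the dimension-one theory, which realises $\Phi$ as the quotient in the group extension $0 \to \mathcal A(S) \to A(U) \to \Phi \to 0$.

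Finally, I would verify the test-N\'eron property: for any transversal trait $Z \to S$ whose closed point maps to $t \in S$, the base change $\mathcal N_Z$ is a smooth separated group $Z$-scheme of finite type whose identity component is the semiabelian $\mathcal A_Z$ (agreeing by \cref{A=N0} with the identity component of the true N\'eron model) and whose group of components at the closed point of $Z$ is the $\Phi_t$ prescribed by \cref{prop:monodromy_transversal}; uniqueness of N\'eron models over a trait (\cref{dim1}) then identifies $\mathcal N_Z$ with the N\'eron model of $A_{Z_K}$. The main obstacle I anticipate is the construction step itself: producing a genuine $S$-group algebraic space structure on the glued object $\mathcal N$ amounts to the vanishing of a $2$-cocycle obstruction in $H^2(\Phi, \mathcal A(S))$, which expresses the compatibility between the abstract group extension supplied by dimension-one theory and the geometric gluing across a higher-dimensional base.
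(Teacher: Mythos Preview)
Your overall strategy coincides with the paper's: reduce to a strictly local base, use toric additivity to identify the target component group as $\Psi=\bigoplus_i\Phi_i=\bigoplus_i\coker\phi_i\subset X'\otimes\Q/\Z$, lift $\Psi$ into $A_K[N]$ through $\Z_l$-linear sections of $T_lA_K\twoheadrightarrow X'\otimes\Z_l$, assemble $\mathcal N$ from copies of $\mathcal A$ indexed by $\Psi$, verify the test-N\'eron property by matching component groups over transversal traits via \cref{prop:monodromy_transversal} and \cref{thm:grp_comps}, and descend to general $S$ using the uniqueness in \cref{test-NM_unique}.

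The difference is exactly at your flagged obstacle, and the paper's packaging dissolves it. Instead of gluing copies $\mathcal A_\phi$ along translations by $b_\phi-b_\psi\in A(U)$ with an informal ``collapsing of coincidences'' and then worrying about a $2$-cocycle in $H^2(\Phi,\mathcal A(S))$, the paper notes that the lift $\alpha\colon\Psi\to A_K[N]$ is already a \emph{group homomorphism}, since it arises prime-by-prime from a $\Z_l$-module splitting of a surjection of free $\Z_l$-modules. Thus $H=\{(\alpha(\psi),\psi)\}$ is a subgroup of $A_K[N]\times\Psi$, its schematic closure $\mathcal H\subset\mathcal A\times_S\underline\Psi_S$ is a closed \'etale $S$-subgroup scheme, and one simply sets
\[
\mathcal N \;:=\; (\mathcal A\times_S\underline\Psi_S)/\mathcal H
\]
as an fppf quotient of a group by a subgroup. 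The group-space structure is then automatic; your ``coincidences'' are absorbed because over any stratum where some $\alpha(\psi)$ extends into $\mathcal A$, that section already lies in $\mathcal H$; and representability follows since each $\mathcal A\times\{\psi\}\to\mathcal N$ is an open immersion. Your codimension-one reduction for the $2$-cocycle can in fact be pushed through (rational maps from a regular scheme to a semiabelian scheme are morphisms, so the check at the $\zeta_i$ suffices), but the quotient construction makes it unnecessary.
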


\begin{proof} 
Our proof is constructive; we subdivide it in steps.

\textbf{Step 1: constructing the group $\Psi$}. Let $s$ be a geometric point of $S$, and write $V_s$ for the spectrum of the strict henselization at $s$. Let $K_s$ be the field of fractions of $V_s$, that is, the maximal extension of $K$ unramified at $s$. Let $\mathcal J_s=\{D_1,\ldots,D_n\}$ be the finite set of components of the strict normal crossing divisor $D\times_SV_s$. For every $i\in \mathcal J_s$, let $\zeta_i$ be the generic point of $D_i$, and $X_i$ the character group of the maximal torus of $\mathcal A_{\zeta_i}$. Let also $X$ be the character group of the maximal torus of $\mathcal A_s$. For every $i\in \mathcal J_s$, the monodromy pairing at $\zeta_i$ induces a group homomorphism
$$X'_i\otimes\Q/\Z\ra X_i^{\vee}\otimes\Q/\Z$$
with kernel $\Phi_i$, the (finite) group of components of the N\'eron model of $\mathcal A_{\mathcal O_{\zeta_i}}$. We define the finite abelian group
$$\Psi:=\bigoplus_{i\in\mathcal J_s}\Phi_i\subset \left(\bigoplus_{i\in\mathcal J_s} X'_i\right)\otimes\Q/\Z=X'\otimes\Q/\Z$$
where the last equality is deduced from the purity map, which is an isomorphism under our assumption of toric-additivity. We will write $N$ for the order of $\Psi$, so that $\Psi\subset X'\otimes\Z/N\Z$.

We write $\underline X'$ and $\underline \Psi$ for the constant group schemes over $V_s$ with value $X'$ and $\Psi$ respectively; like in the proof of \cref{test-NM_unique} we may choose a section of the surjection $A_{K_s}[N]\ra \underline X'_{K_s}\otimes \Z/N\Z$, and therefore obtain an injective map of group schemes $\alpha\colon \Psi_{K_s}\ra A_{K_s}[N]$. We may see this as a group homomorphism $\Psi=\Psi_{K_s}(K_s)\ra A_{K_s}[N](K_s)$; we remark that it depends on the choice of section $X'_{K_s}\otimes \Z/N\Z\ra A_{K_s}[N]$.

Consider now the set of sections $\mathcal S=\{\underline X'_{K_s}(K_s)\otimes\Z/N\Z\ra A_{K_s}[N](K_s)\}$. It is a torsor under the finite group $\bigoplus_{l|N}T_lA^f(K_s)\otimes\Z/N\Z$, and as such it is finite. As the group $\Psi$ is finite as well, there exists a finite extension $K\ra K'$, unramified over $s$, such that for any choice of section in $\mathcal S$, the associated map  $\Psi\ra A[N](K_s)$ factors via $A[N](K')$.

\textbf{Step 2: spreading out to an \'etale neighbourhood of $s$.} The normalization of $S$ inside $K'$ is unramified over the image of $s$ in $S$, hence \'etale over it (\citep{stacks}\href{http://stacks.math.columbia.edu/tag/0BQK}{TAG 0BQK}), so we obtain an \'etale neighbourhood $S'$ of $s$, which we may assume to be connected, with fraction field $K'$. We write $\mathcal J'$ for the set of irreducible components of $D\times_SS'$. There is a natural function $\mathcal J_s\ra \mathcal J'$: up to restricting $S'$, we may assume that it is bijective. Indeed, its surjectivity corresponds to the fact that every component of $D\times_SS'$ contains (the image of) $s$; imposing also injectivity means asking that $D\times_SS'$ is a \textit{strict} normal crossing divisor.  Thus, we need not distinguish between $\mathcal J_s$ and $\mathcal J'$ and we will simply write $\mathcal J$ for this set.

\textbf{Step 3: constructing the subgroup-scheme $\mathcal H\subseteq \mathcal A_{S'}\times_{S'}\Psi_{S'}.$} We call $H\subseteq A[N](K')\times \Psi$ the image of $\Psi$ via $(\alpha,\id)\colon \Psi\rightarrow A[N](K')\times \Psi$; we let $\mathcal H/S'$ be the schematic closure of $H$ inside $\mathcal A_{S'}\times_{S'} \underline \Psi$ (where by $\underline \Psi$ we denote the constant group scheme over $S'$ associated to the finite abelian group $\Psi$). We may argue as in the proof of \cref{test-NM_unique} to show that $\mathcal H$ is \'etale over $S$ and actually it is a disjoint union $\sqcup_{j\in \Psi} V_j\rightarrow S'$ of open immersions. In fact, if we write $U'=U\times_SS'$, the base change $A_{U'}$ is an abelian scheme; therefore $A_{U'}[N]\times_{U'}\underline \Psi_{U'}$ is finite, and each $V_j\ra S'$ is an isomorphism over $U'$. This can be restated by saying that  the composition $$\mathcal H_{U'}\ra \mathcal A_{U'}\times_{U'}\Psi_{U'}\ra \underline\Psi_{U'}$$ is an isomorphism.  

\textbf{Step 4: taking the quotient by $\mathcal H$.} Consider now the fppf-quotient $$\mathcal N^{\alpha}:=\frac{\mathcal A_{S'}\times_{S'} \Psi_{S'}}{\mathcal H}.$$
The superscript $\alpha$ reminds us that $\mathcal H$ depends on the choice of section $\alpha$. First, we claim that its restriction $\mathcal N^{\alpha}_{U'}$ is canonically isomorphic to $A_{U'}$. Indeed, we observed that $\mathcal H_{U'}=\underline\Psi_{U'}$, and the quotient morphism for $\underline\Psi_{U'}\ra \mathcal A_{U'}\times_{U'}\underline\Psi_{U'}$, $\psi\mapsto (\alpha(\psi),\psi)$ is $\mathcal A_{U'}\times_{U'}\underline\Psi_{U'}\ra \mathcal A_{U'}$, $(a,\psi)\mapsto a-\alpha(\psi)$, which proves the claim.

Because $\mathcal H$ is \'etale, $\mathcal N^{\alpha}$ is automatically an algebraic space; we claim that it is actually representable by a scheme. As the quotient morphism $p\colon \mathcal A_{S'}\times_{S'}\underline \Psi\ra \mathcal N^{\alpha}$ is an $\mathcal H$-torsor, $p$ is \'etale. In particular the restriction of $p$ to the connected component of identity, $\mathcal A_{S'}\times\{0\}\ra \mathcal N^{\alpha}$, is \'etale; it is also separated, and an isomorphism over $U'$. It follows that it is an open immersion. Hence, all other components $\mathcal A_{S'}\times \{\psi\}$ map to $\mathcal N^{\alpha}$ via an open immersion. The disjoint union $\bigsqcup_{\psi\in\Psi}\mathcal A_{S'}\times_{S'}\{\psi\}$ surjects onto $\mathcal N^{\alpha}$, and this gives us an open cover of $\mathcal N^{\alpha}$ by schemes. 

In summary, we have obtained an $S'$-group scheme $\mathcal N^{\alpha}$, which restricts to $A$ over $U'$; moreover, it is $S'$-smooth, of finite presentation, and separated, since $\mathcal H$ is closed in the separated scheme $\mathcal A_{S'}\times_{S'}\Psi_{S'}$.

\textbf{Step 5: showing that $\mathcal N^{\alpha}$ is a test-N\'eron model.} To ease notation, let us write $S$ in place of $S'$, $D=\bigcup_{i\in\mathcal J} D_i$ for the strict normal crossing divisor $D\times_SS'$. Let $Z$ be a strictly henselian trait, with closed point $z$, and $g\colon Z\ra S$ a morphism transversal to $D$. Write $T$ for the strict henselization of $S$ at $z$ and $\mathcal E\subseteq \mathcal J$ for the subset of indices of components $D_i$ that contain $z$. Let also $\mathcal M/Z$ be the N\'eron model of $A\times_SZ$. The N\'eron mapping property gives a morphism $\mathcal N^{\alpha}_Z\ra \mathcal M$, which is an open immersion and induces an isomorphism between the fibrewise-connected components of identity, as they are both semi-abelian (\cref{A=N0}). Let $\Phi$ and $\Upsilon$ be the finite abelian groups of connected components of the fibres $\mathcal N^{\alpha}_z$ and $\mathcal M_z$ respectively. To show that $\mathcal N_Z\ra \mathcal M$ is an isomorphism, we only need to check that the induced morphism $\Phi \ra \Upsilon$ is an isomorphism. It is certainly injective; we claim that it is also surjective.

Let $Y, Y'$ be the character groups of the maximal tori of $\mathcal A_z, \mathcal A'_z$. By \cref{thm:grp_comps}, $\Upsilon$ is the kernel of the map $Y'\otimes \Q/\Z\to Y^{\vee}\otimes \Q/\Z$ induced by the monodromy pairing. Now for every $i\in \mathcal J$, let $X_i$ (resp. $X'_i$) be the character group of the maximal torus contained in $\mathcal A_{\zeta_i}$ (resp. $\mathcal A'_{\zeta_i})$, where $\zeta_i$ is the generic point of $D_i$. By the assumption of toric additivity, $Y$ is identified with $\bigoplus_{i\in\mathcal E}X_i$ and $Y'$ with $\bigoplus_{i\in\mathcal E}X'_i$.

The composition $\Psi=\underline\Psi(z)\twoheadrightarrow \underline\Psi(z)/\mathcal H(z)=\Phi \hookrightarrow \Upsilon$ fits into the commutative diagram with exact rows

\begin{center}
\begin{tikzcd}
0\ar[r] & \Psi\ar[r]\ar[d] & \bigoplus_{i\in\mathcal J}X'_i\otimes\Q/\Z \ar[r]\ar[d] & \bigoplus_{i\in\mathcal J}X_i^{\vee}\otimes\Q/\Z\ar[d]\\
0\ar[r] & \Upsilon\ar[r] & \bigoplus_{i\in\mathcal E}X'_i\otimes\Q/\Z \ar[r] & \bigoplus_{i\in\mathcal E}X_i^{\vee}\otimes\Q/\Z
\end{tikzcd}
\end{center}
where the rightmost horizontal maps are direct sums of the monodromy pairings. The fact that the bottom-right horizontal map is the sum of the monodromy pairings at the various $\zeta_i$, $i\in\mathcal E$ follows from \cref{prop:monodromy_transversal}.

Now, clearly the two rightmost vertical maps are surjective and admit compatible sections going upwards, obtained by the inclusion $\mathcal E\subset \mathcal J$; hence also the map $\Psi\ra \Upsilon$ admits a section $\Upsilon\to \Psi$, and is therefore surjective, which proves the claim.

We have shown that the object $\mathcal N^{\alpha}$ that we have constructed is a test-N\'eron model for $\mathcal A/S$; in particular, choosing $\beta\colon \Psi_{K_s}\to A_{K_s}[N]$ different from $\alpha$, induces a natural $S'$-isomorphism $\mathcal N^{\alpha}\to \mathcal N^{\beta}$, by \cref{test-NM_unique}. We may therefore drop the superscript $\alpha$.

\textbf{Step 7: descending $\mathcal N$ along $S'\ra S$.} For every geometric point $s$ of $S$, we have found an \'etale neighbourhood $S'\ra S$ and a test-N\'eron model $\mathcal N/S'$ over $S'$. Using uniqueness up to unique isomorphism of test-N\'eron models, their stability under \'etale base change, and effectiveness of \'etale descent for algebraic spaces, we obtain a smooth separated algebraic space of finite type $\til{\mathcal N}$ over $S$, and an isomorphism $\til{\mathcal N}\times_SU\ra A$. Because the property of being a test-N\'eron model is \'etale-local, $\til{\mathcal N}$ is itself a test-N\'eron model for $A$ over $S$. 
\end{proof}

\subsection{Test-N\'eron models and finite flat base change}\label{subsection:tNM_finite_flat}

In \citep{edix}, Edixhoven considers the case of an abelian variety $A_K$ over the generic point of a trait $S$, and a tamely ramified extension of traits $\pi\colon S'\ra S$ whose associated extension of fraction fields $K\ra K'$ is Galois. He considers the N\'eron model $\mathcal N/S$ of $A_K$ and the N\'eron model $\mathcal N'/S'$ of $A_{K'}$: after defining a certain equivariant action of $\Gal(K'|K)$ on the Weil restriction $\pi_*\mathcal N'$, he shows that $\mathcal N$ is naturally identified with the closed subgroup-scheme of $\Gal(K'/K)$-fixed points of $\pi_*\mathcal N'$.

In this subsection, we aim to show an analogous statement for test-N\'eron models of toric additive abelian schemes, over a base of higher dimension.

We let then $S$ be a noetherian, regular, strictly local scheme, of residue characteristic $p\geq 0$, $D=\cup_{i=1}^n\divv(t_i)$ a normal crossing divisor on $S$ (thus the $t_i$ are part of a system of regular parameters for $\O_S(S)$), $A$ an abelian scheme over $U=S\setminus D$, $\mathcal A/S$ a toric-additive semi-abelian scheme extending $A$. 

\begin{comment}
We can apply \cref{main_thm} to construct a test-N\'eron model $$\mathcal N=\frac{\mathcal A\times_S\Psi_S}{\mathcal H}.$$ Notice that the \'etale cover $S'\ra S$ of the proof of \cref{main_thm} is necessarily trivial in this case. 
\end{comment}
Consider now a finite flat cover $\pi\colon T\ra S$ of the form
$$T=\Spec \frac{\O_S(S)[X_1,\ldots,X_n]}{X_1^{m_1}-t_1,\ldots,X_n^{m_n}-t_n}$$

for some positive integers $m_1,\ldots,m_n$ all coprime to the residue characteristic $p$. Then $T$ is a regular strictly local scheme. We denote by $K'$ its field of fractions. The morphism $\pi$ is finite \'etale over $U$, tamely ramified over $S$, and the preimage via $\pi\colon T\ra S$ of $D$ is the normal crossing divisor $\pi^{-1}(D)=\cup_{i=1}^n\divv X_i$.

We have a commutative diagram
$$
\begin{tikzcd}
\Gal(K^t|K') \arrow[r, two heads]\arrow[hook]{d} & \pi_1^t(U_T) =\bigoplus_{i=1}^n\widehat\Z'(1) \arrow[d, xshift=-4.5ex, hook]\\
\Gal(K^t|K) \arrow[r, two heads] & \pi_1^t(U)=\bigoplus_{i=1}^n \widehat{\Z}'(1)
\end{tikzcd}
$$
where $K^t$ is the maximal tame extension of $K$, $\widehat \Z'(1)=\prod_{l\neq p}\Z_l(1)$, and the right vertical arrow is given by taking the $m_i$-th power on the $i$-th component. We will write $\pi_1(U)=\bigoplus_{i=1}^nI_i$ and identify $\pi_1^t(U_T)$ with its subgroup $\bigoplus_{i=1}^nm_iI_i$ (using additive notation).

The fraction field $K'$ of $T$ is an extension of $K$ of order $m_1\cdot m_2\cdot \ldots \cdot m_n$, and we write $G$ for the Galois group $\Gal(K'|K)$. As it has order $m_1\cdot m_2\cdot\ldots\cdot m_n$, we see that it is identified with the quotient $\pi_1^t(U_T)/\pi_1^t(U)=\bigoplus_{i=1}^n I_i/m_iI_i=\bigoplus_{i=1}^n \mu_{m_i}(U)$. 

By \cref{finiteflat_TA}, $\mathcal A\times_ST$ is still toric-additive. We follow the construction carried out in the proof of \cref{main_thm} to obtain a test-N\'eron model $\mathcal M/T$: to start with, we consider for each $i=1,\ldots,n$ the 
monodromy pairing for $\phi\colon X'_i\to X_i^{\vee}$ for $\mathcal A$ at the generic point of $\divv t_i$. The monodromy pairing $\phi_i'$ for $\mathcal A_T$ at the generic point of $\divv X_i$ is then given by composing $\phi_i$ with multiplication by $m_i$, 
$$\phi_i'=m_i\cdot \phi_i\colon X'_i\to X_i^{\vee}.$$
We construct the finite abelian group
$$\Psi'=\bigoplus_{i=1}^n\ker (\phi'_i\otimes\Q/\Z).$$

\begin{lemma}\label{claim:G-action_Psi}
There is a natural action of $G$ on $\Psi'$. 
\end{lemma}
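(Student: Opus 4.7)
The plan is to recognize $\Psi'$ as a $\pi_1^t(U)$-submodule of $X' \otimes \Q/\Z$ on which the subgroup $\pi_1^t(U_T) = \bigoplus_i m_i I_i$ acts trivially, so that the action descends to the quotient $G = \pi_1^t(U)/\pi_1^t(U_T) = \bigoplus_i \mu_{m_i}(U)$.

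First I would embed $\Psi'$ inside $X' \otimes \Q/\Z$. The character groups of the maximal tori are unchanged by the base change $T \to S$, and by \cref{finiteflat_TA}, $\mathcal A_T/T$ is still toric additive; the purity map then gives $X' \otimes \Q/\Z = \bigoplus_i X'_i \otimes \Q/\Z$, and by definition $\Psi' = \bigoplus_i \Phi'_i$ with $\Phi'_i = \ker(\phi'_i \otimes \Q/\Z) \subseteq X'_i \otimes \Q/\Z$. For each prime $l \neq p$, the identifications of \cref{subs:char_grps} give
$$X' \otimes \Q_l/\Z_l \;\cong\; \bigl(T_lA_K(K^s)/T_lA_K(K^s)^f\bigr) \otimes \Q_l/\Z_l,$$
endowing $X' \otimes \Q/\Z$ with a canonical $\pi_1^t(U)$-action. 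By the proof of \cref{A=B}, each summand $X'_i \otimes \Q_l/\Z_l$ corresponds to the factor $V_i \otimes \Q_l/\Z_l$ of the $l$-toric additivity decomposition: it is $\pi_1^t(U)$-invariant, and $I_j$ acts trivially on it for $j \neq i$. In particular $\bigoplus_{j \neq i} m_j I_j$ acts trivially on $\Phi'_i$.

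Next I would verify that $m_i I_i$ also acts trivially on $\Phi'_i$. Consider the strict henselization $T_i$ of $T$ at the generic point of $\divv(X_i)$: its tame pro-$l$ fundamental group embeds into $\pi_1^{t,l}(U)$ as the subgroup $m_i I_i$, the factor $m_i$ reflecting the ramification of $\pi$ there, exactly as in \cref{prop:monodromy_transversal}. Applying \cref{l_primary_phi} to $\mathcal A_{T_i}/T_i$ yields
$$\Phi'_i[l^\infty] \;=\; \frac{\bigl(T_lA(K^s) \otimes \Q_l/\Z_l\bigr)^{m_i I_i}}{T_lA(K^s)^{m_iI_i} \otimes \Q_l/\Z_l},$$
a quotient on which $m_i I_i$ acts trivially by construction. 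Combined with the previous paragraph, $\pi_1^t(U_T) = \bigoplus_j m_j I_j$ acts trivially on every $\Phi'_i$, hence on $\Psi'$, and the $\pi_1^t(U)$-action descends to the desired $G$-action.

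The main delicate point, and the one I would double-check carefully, is the compatibility of these identifications across primes: one must verify that the intrinsic definition of $\Phi'_i$ via the monodromy pairing $\phi'_i = m_i \phi_i$ matches the Tate-module description for each $l \neq p$, so that the individual pieces assemble into a single well-defined action on $\Psi'$. This compatibility is precisely the content of \cref{prop:monodromy_transversal} applied to transversal traits inside $T$, after which the statement is immediate.
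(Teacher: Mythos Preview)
Your argument for $l\neq p$ is essentially the paper's, though organized differently. The paper works summand by summand: for each $i$ it uses the description $\phi'_i\otimes\Z_l=\sigma_i^{m_i}-1$ (from unipotence, $m_i(\sigma_i-1)=\sigma_i^{m_i}-1$) to see directly that $I_i$ acts on $\ker(\phi'_i\otimes\Q_l/\Z_l)$ and that $m_iI_i$ fixes it, giving an $I_i/m_iI_i$-action on $\Phi'_i[l^\infty]$; the $G$-action on $\Psi'[l^\infty]$ is then the direct sum. Your route instead embeds $\Psi'$ in $X'\otimes\Q/\Z$ via the purity isomorphism, invokes the toric-additivity decomposition of \cref{A=B} to see that $I_j$ ($j\neq i$) acts trivially on the $i$-th summand, and then uses \cref{l_primary_phi} for $m_iI_i$. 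This is correct but brings in more machinery than needed: the paper's argument does not use toric additivity at all for this lemma, since the $G$-action is defined on the abstract direct sum $\bigoplus_i\Phi'_i$ rather than through its embedding in $X'\otimes\Q/\Z$.

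There is one genuine gap. Your plan is to exhibit a $\pi_1^t(U)$-action on $X'\otimes\Q/\Z$ and then descend, but the Tate-module identification $X'\otimes\Q_l/\Z_l\cong (T_lA(K^s)/T_lA(K^s)^f)\otimes\Q_l/\Z_l$ and the resulting Galois action are only available for $l\neq p$. You never say what happens on $\Psi'[p^\infty]$, which can be nonzero (nothing forces the orders of the $\Phi'_i$ to be prime to $p$). The paper handles this by fiat: it declares the $G$-action on the $p$-part of $\Psi'$ to be trivial. You should do the same, and it is worth noting why this is harmless for the intended application: since each $m_i$ is coprime to $p$, the later lemma $\Psi=\Psi'^G$ on the $p$-part reduces to $\ker(\phi_i\otimes\Q_p/\Z_p)=\ker(m_i\phi_i\otimes\Q_p/\Z_p)$, which holds because $m_i$ is a $p$-adic unit.
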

\begin{proof}It is enough to define the action of $G$ on the $l^{\infty}$-torsion part $\Psi'\otimes\Q_l/\Z_l$, for each prime number $l$. 

We start by letting $G$ act trivially on the $p^{\infty}$-part. For $l\neq p$, we describe the natural action of $G$ on the $l^{\infty}$-torsion part $\Psi'[l^{\infty}]$. The $l$-monodromy pairing $\phi_i\otimes\Z_l$ is given by $$\sigma_i-1\colon \frac{T_lA(K^s)}{T_lA(K^s)^{I_i}}\to T_lA^{t_i}(K^{s})$$
where $\sigma_i$ is a generator of $I_i$. Hence $\phi_i'\otimes\Z_l=m_i\cdot\phi_i\otimes\Z_l=m_i(\sigma_i-1)=\sigma^{m_i}-1$. 

The $G$-action on $T_lA(K^s)$ is compatible with the Weil pairing; hence $I_i$ acts on $\ker(\phi'_i\otimes\Q_l/\Z_l)=\ker((\sigma_i^{m_i}-1)\otimes\Q_l/\Z_l)$. Since $\sigma_i^{m_i}$ is the generator of $m_iI_i$, the kernel is fixed by $m_iI_i$. It follows that there is a natural action of $I_i/m_iI_i$ on $\ker(\phi'_i\otimes\Q_l/\Z_l)$, hence a natural action of $G=\bigoplus I_i/m_iI_i$ on $\Psi'[l^{\infty}]$, hence a natural action of $G$ on $\Psi'=\bigoplus_l\Psi'[l^{\infty}]$.
\end{proof}

We write
$$\Psi=\bigoplus_{i=1}^n\ker(\phi_i\otimes\Q/\Z).$$
Clearly, $\Psi$ is a subgroup of $\Psi'$.
\begin{lemma}
$\Psi$ is equal to the subgroup $\Psi'^G$ of invariants with respect to the $G$-action which we defined in \cref{claim:G-action_Psi}.
\end{lemma}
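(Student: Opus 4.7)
The plan is to verify the equality one prime at a time, using the canonical $l$-primary decompositions
\[
\Psi=\bigoplus_{l}\Psi[l^\infty],\qquad \Psi'=\bigoplus_{l}\Psi'[l^\infty]
\]
which are both preserved by the $G$-action. This reduces the problem to two disjoint cases, the case $l\neq p$ and the case $l=p$.

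For $l\neq p$, I would reuse the identification already introduced in the proof of \cref{claim:G-action_Psi}: fixing a topological generator $\sigma_i$ of $I_i$, the map $\phi_i\otimes\mathbb{Z}_l$ is $\sigma_i-1$ and $\phi_i'\otimes\mathbb{Z}_l$ is $\sigma_i^{m_i}-1$ on $X'_i\otimes\mathbb{Z}_l$, so after tensoring with $\mathbb{Q}_l/\mathbb{Z}_l$ their kernels are the $I_i$-fixed and the $m_iI_i$-fixed subgroups of $X'_i\otimes\mathbb{Q}_l/\mathbb{Z}_l$ respectively. Since the $G$-action on $\Psi'[l^\infty]$ is diagonal, with $I_j/m_jI_j$ acting trivially on the $i$-th summand whenever $j\neq i$ and $I_i/m_iI_i$ acting through the quotient of the ambient $I_i$-action, passing to $G$-invariants amounts to taking $I_i/m_iI_i$-invariants inside the $m_iI_i$-fixed subgroup. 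This recovers exactly the $I_i$-fixed part, which is the $i$-th summand of $\Psi[l^\infty]$.

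For $l=p$ the $G$-action on $\Psi'[p^\infty]$ is trivial by the definition given in \cref{claim:G-action_Psi}, so one only has to check $\Psi[p^\infty]=\Psi'[p^\infty]$, i.e.\ that $\ker(\phi_i\otimes\mathbb{Q}_p/\mathbb{Z}_p)=\ker(m_i\phi_i\otimes\mathbb{Q}_p/\mathbb{Z}_p)$ for each $i$. This is immediate from the hypothesis $\gcd(m_i,p)=1$: multiplication by $m_i$ is an automorphism of the divisible $p$-primary group $X_i^\vee\otimes\mathbb{Q}_p/\mathbb{Z}_p$, so pre-composing $\phi_i\otimes\mathbb{Q}_p/\mathbb{Z}_p$ with it does not alter its kernel.

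There is no genuine obstacle; the one point requiring care is the bookkeeping of how the decomposition $G=\bigoplus_i I_i/m_iI_i$ interacts with the summand decomposition of $\Psi'$, which is precisely what causes the two successive fixed-point operations ``$(-)^{m_iI_i}$ followed by $(-)^{I_i/m_iI_i}$'' to collapse to a single ``$(-)^{I_i}$'' in each summand at primes $l\neq p$. The coprimality assumption then disposes of the characteristic prime for free.
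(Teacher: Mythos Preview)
Your proposal is correct and follows essentially the same route as the paper: decompose into $l$-primary pieces, handle $l\neq p$ via the identification of $\phi_i\otimes\mathbb{Z}_l$ and $\phi'_i\otimes\mathbb{Z}_l$ with $\sigma_i-1$ and $\sigma_i^{m_i}-1$ so that the $G$-invariants in $\ker(\phi'_i\otimes\mathbb{Q}_l/\mathbb{Z}_l)$ are exactly $\ker(\phi_i\otimes\mathbb{Q}_l/\mathbb{Z}_l)$, and dispose of $l=p$ using that each $m_i$ is a $p$-adic unit. One cosmetic slip: in the $l=p$ step you say ``pre-composing $\phi_i\otimes\mathbb{Q}_p/\mathbb{Z}_p$'' with multiplication by $m_i$ on $X_i^\vee\otimes\mathbb{Q}_p/\mathbb{Z}_p$, but that is post-composition; either way the kernel is unchanged since multiplication by $m_i$ is an automorphism of both source and target.
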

\begin{proof}

Again, we only need check this on the $l^{\infty}$-parts.

When $l=p$, the claim says that $\Psi[p^{\infty}]=\Psi'[p^{\infty}]$, as $G$ acts trivially on the $p$-part of $\Psi'$. This is indeed true: as every $m_i$ is coprime to $p$, we have  for all $i=1,\ldots,n$, 
$$\ker(\phi_i\otimes\Q_p/\Z_p)=\ker(m_i\phi_i\otimes\Q_p/\Z_p).$$

If  $l\neq p$, $\ker(\phi_i\otimes\Q_l/\Z_l)$ consists of those elements of $\ker(\phi'_i\otimes\Q_l/\Z_l)$ that are in the kernel of $\sigma_i-1$, hence the $I_i/m_iI_i$-invariants, as we wanted to show.
\end{proof}

\begin{comment}
 to start with, we consider the finite abelian group 
$$\Psi'=\bigoplus_{l \; prime}\bigoplus_{i=1}^n\frac{\left(T_lA(\overline K)\otimes \Q_l/\Z_l\right)^{m_iI_i}}{T_lA(\overline K)^{m_iI_i}\otimes \Q_l/\Z_l}=\bigoplus_{l \; prime}\frac{\left(T_lA(\overline K)\otimes \Q_l/\Z_l\right)^{\oplus_{i=1}^nm_iI_i}}{T_lA(\overline K)^{\bigoplus_{i=1}^n m_iI_i}\otimes \Q_l/\Z_l}$$

We claim that $T_lA(\overline K)^{I_i}=T_lA(\overline K)^{m_iI_i}$; indeed, letting $e_i$, be a topological generator of $I_i$, and denoting still by $e_i$ the automorphism of $T_lA(\overline K)$ induced by $e_i$, we know by \cref{subsub_unipotent} that $(e_i-1)^2=0$. Using this relation, we obtain
$$e_i^{m_i}-1=((e_i-1)+1)^{m_i}-1=m(e_i-1)+1-1=m(e_i-1).$$
As $T_lA(\overline K)$ is torsion-free, we see that $\ker(e_i^{m_i}-1)=\ker(e_i-1)$, which proves our claim. Hence, we actually have 
$$\Psi'=\bigoplus_{l \; prime}\frac{\left(T_lA(\overline K)\otimes \Q_l/\Z_l\right)^{\oplus_{i=1}^nm_iI_i}}{T_lA(\overline K)^{\bigoplus_{i=1}^n I_i}\otimes \Q_l/\Z_l}$$
and it follows that $\Psi'$ has a natural action of $G$.
\end{comment}

Next, we let $N=\ord(\Psi')$. Thank to the hypothesis of toric additivity, we can go carry out the construction of a test-N\'eron model as in the proof of \cref{main_thm}. We start by choosing a section $\alpha\colon\Psi'\ra A[N](K')$ of the natural surjection $A[N](K')\to \Psi'$. Then we write $H'$ for the image of $\Psi'\xrightarrow{(\alpha,\id)} A[N](K')\times\Psi'$ and $\mathcal H'$ for its schematic closure inside $\mathcal A_T\times_T\Psi'_T$. The fppf-quotient 
$$\mathcal M=\frac{\mathcal A_T\times_T \Psi'_T}{\mathcal H'}$$
is represented by a test-N\'eron model for $A_{U'}$ over $T$.

In order to compare $\mathcal M$ and $\mathcal N$, we will consider the Weil restriction of $\mathcal M$ via $\pi\colon T\ra S$, that is, the functor $\pi_*\mathcal M\colon (\Sch/S)\ra \Sets$ given by $(Y\ra S)\mapsto \mathcal M(Y\times_ST)$. Recall that we have an exact sequence of fppf-sheaves of abelian groups 
$$0\ra \mathcal H' \ra \mathcal A_T\times_T\Psi'_T\ra \mathcal M\ra 0.$$
As $\pi$ is a finite morphism, the higher direct images of $\pi$ for the fppf-topology vanish, and we have an exact sequence of fppf-sheaves
$$0\ra \pi_*\mathcal H'\ra \pi_*\mathcal A_T\times_S\pi_*\Psi'_T\ra \mathcal \pi_*\mathcal M \ra 0.$$

We claim that $\pi_*\mathcal M$ is representable by a scheme. By \citep[XI, 1.16]{raynaud}, semi-abelian schemes are quasi-projective, hence so is $\mathcal A_T\times_T\Psi'_T$. Clearly $\mathcal H'/T$ is quasi-projective as well. As $\pi\colon T\ra S$ is finite and flat, $\pi_*\mathcal H'$ and $\pi_*\mathcal A_T\times_S\pi_*\Psi'_T$ are schemes (see for example \citep[2.2]{edix}). Now, $\pi_*\mathcal H'/S$ is \'etale (\citep[4.9]{scheiderer}), and its intersection with the identity component of $\pi_*\mathcal A_T\times_S\pi_*\Psi'_T$ is trivial. Reasoning as in the proof of \cref{main_thm}, we conclude that $\pi_*\mathcal M$ has an open cover by schemes, hence it is a scheme.

We want to define an equivariant action of $G$ on $\pi_*\mathcal M\ra S$, where $G$ acts trivially on $S$. To do this, we let first $G$ act on $A_{K'}$ via the action of $G$ on $K'$. By \citep[1.3 pag.132]{deligne} the action of $G$ extends uniquely to an equivariant action on $\mathcal A_T\ra T$. We have also described an action of $G$ on $\Psi'$, which induces an equivariant action on $\Psi'_T\ra T$. We put together these actions to find an equivariant action of $G$ on $\mathcal A_T\times_T\Psi'_T\ra T$: clearly $H'$ is $G$-invariant, thus the same is true for its schematic closure $\mathcal H'$. Therefore the action of $G$ descends to an equivariant action of $G$ on $\mathcal M\ra T$.

To define the action of $G$ on $\pi_*\mathcal M$, we let $g\in G$ act on $\pi_*\mathcal M$ via the composition

$$\pi_*\mathcal M\times_ST\xrightarrow{(\id,g)}\pi_*\mathcal M\times_ST\ra \mathcal M\xrightarrow{g^{-1}}\mathcal M. $$

where the second arrow is given by the identity morphism $\pi_*\mathcal M\ra \pi_*\mathcal M$. This defines the desired equivariant action of $G$ on $\pi_*\mathcal M\ra S$. 

Consider the functor of fixed points $(\pi_*\mathcal M)^G\colon \Sch/S\ra \Sets$, $(Y\ra S)\mapsto \pi_*\mathcal M(Y)^G.$ Then $(\pi_*\mathcal M)^G$ is represented by a closed subgroup-scheme of $\pi_*\mathcal M$ \citep[3.1]{edix}; as the order of $G$ is invertible on $S$, $(\pi_*\mathcal M)^G$ is even smooth over $S$  (\citep[3.4]{edix}).

\begin{proposition} \label{weil}
Let $\mathcal N/S$ be a test-N\'eron model over $S$. There is a canonical closed immersion $\iota\colon \mathcal N\ra \pi_*\mathcal M$, which identifies $\mathcal N$ with the subgroup-scheme of fixed points $(\pi_*\mathcal M)^G$.

\end{proposition}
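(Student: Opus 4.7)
The strategy is to show that $(\pi_*\mathcal M)^G$ is itself a test-N\'eron model for $A$ over $S$; \cref{test-NM_unique} then yields a canonical isomorphism $\mathcal N\isomto (\pi_*\mathcal M)^G$ extending the identity on the fibrewise-connected component $\mathcal A$, and composition with the closed immersion $(\pi_*\mathcal M)^G\hookrightarrow \pi_*\mathcal M$ produces the desired canonical $\iota$.

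As recorded in the discussion preceding the proposition, $(\pi_*\mathcal M)^G$ is a smooth, separated closed $S$-subgroup scheme of $\pi_*\mathcal M$, of finite type; its restriction to $U$ equals $A$ by classical Galois descent along the finite \'etale $G$-cover $U_T\to U$. The content of the argument is then the test-N\'eron property: given a strictly henselian trait $g\colon Z\to S$ transversal to $D$, with fraction field $L=\Frac\Gamma(Z,\O_Z)$, one must show that $((\pi_*\mathcal M)^G)_Z$ is the N\'eron model of $A_L$ over $Z$. Weil restriction along the finite flat morphism $\pi$ commutes with arbitrary base change, and formation of $G$-invariants commutes with flat base change, so
\begin{equation*}
\bigl((\pi_*\mathcal M)^G\bigr)\times_S Z \;=\; \bigl(\pi_{Z,*}(\mathcal M_{T_Z})\bigr)^G,\qquad T_Z:=T\times_S Z,\ \pi_Z\colon T_Z\to Z.
\end{equation*}

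Writing $R:=\Gamma(Z,\O_Z)$, fixing a uniformizer $t\in R$, and $g^*t_j=u_jt^{a_j}$ with $u_j\in R^\times$ and $a_j\in\{0,1\}$ by transversality, the strict henselianity of $R$ splits the factors $X_j^{m_j}-u_j$ with $a_j=0$ into linear ones, so $T_Z$ decomposes as a disjoint union $\bigsqcup_\zeta T_Z^{(\zeta)}$ of finitely many strictly henselian traits, indexed by the torsor $\prod_{a_j=0}\mu_{m_j}(R)$. Each component is a tame Galois cover of $Z$ with group $G_2:=\prod_{a_j=1}\mu_{m_j}\subset G$, and each $T_Z^{(\zeta)}\to T$ is transversal to $\pi^{-1}(D)$; the complementary subgroup $G_1:=\prod_{a_j=0}\mu_{m_j}$ permutes the components simply transitively. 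By the test-N\'eron property of $\mathcal M$, each $\mathcal M|_{T_Z^{(\zeta)}}$ is the N\'eron model of its generic fibre, hence $\pi_{Z,*}(\mathcal M_{T_Z})\cong \prod_\zeta \pi_{\zeta,*}(\mathcal M|_{T_Z^{(\zeta)}})$ with $\pi_\zeta\colon T_Z^{(\zeta)}\to Z$; passing to $G_1$-invariants selects the diagonal, naturally identified with a single factor $\pi_{\zeta_0,*}(\mathcal M|_{T_Z^{(\zeta_0)}})$ on which the residual $G_2$-action is exactly the one considered by Edixhoven. The descent theorem of \citep{edix} then identifies the $G_2$-fixed points with the N\'eron model of $A_L$ over $Z$, completing the verification.

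The main obstacle is the combinatorial bookkeeping of the decomposition of $T_Z$ together with the matching of the $G$-action on both sides of the Weil-restriction identification; once this is organised, the argument reduces cleanly to Edixhoven's one-dimensional descent statement, whose applicability is ensured by the standing assumption that each ramification index $m_j$ is coprime to the residue characteristic.
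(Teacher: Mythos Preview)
Your overall strategy---show that $(\pi_*\mathcal M)^G$ is itself a test-N\'eron model and then invoke \cref{test-NM_unique}---is attractive, but the verification of the test-N\'eron property contains a genuine error. You assert that for a transversal trait $g\colon Z\to S$ the fibre product $T_Z=T\times_SZ$ decomposes as a disjoint union of strictly henselian traits, each mapping \emph{transversally} to $(T,\pi^{-1}D)$. Both claims fail in general. Take $S$ strictly local of dimension $2$ with parameters $t_1,t_2$, and $Z$ a transversal trait hitting $D_1\cap D_2$, so $a_1=a_2=1$. If $(m_1,m_2)=(2,3)$ then $T_Z\cong k[[X_1,X_2]]/(X_1^2-X_2^3)$ is the local ring of a cusp: it is an integral one-dimensional local ring that is \emph{not regular}, hence not a trait at all. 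If $(m_1,m_2)=(2,4)$ then $T_Z$ does split into traits, but on each component one has $X_1=\pm X_2^2$, so $\divv(X_1)$ pulls back with multiplicity $2$ and the map to $T$ is not transversal. In either case $\mathcal M|_{T_Z^{(\zeta)}}$ is not known to be a N\'eron model from the test-N\'eron hypothesis on $\mathcal M$, and Edixhoven's descent theorem does not apply. Since \cref{dim2} uses \cref{weil} precisely for covers with two distinct ramification indices $m,n$, this case cannot be avoided.

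The paper's proof sidesteps this difficulty entirely: rather than checking the test-N\'eron property of $(\pi_*\mathcal M)^G$ trait by trait, it exploits the explicit quotient presentations $\mathcal N=(\mathcal A\times_S\underline\Psi_S)/\mathcal H$ and $\mathcal M=(\mathcal A_T\times_T\underline\Psi'_T)/\mathcal H'$, together with the inclusion $\Psi\subset\Psi'=\Psi'^G$ established beforehand. One gets a closed immersion $\mathcal A\times_S\underline\Psi_S\hookrightarrow\pi_*(\mathcal A_T\times_T\underline\Psi'_T)$, checks directly that $\pi_*\mathcal H'\cap(\mathcal A\times_S\underline\Psi_S)=\mathcal H$ by comparing schematic closures of the generic fibres, and so obtains $\iota\colon\mathcal N\hookrightarrow\pi_*\mathcal M$. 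Finally, $\mathcal N$ and $(\pi_*\mathcal M)^G$ are both smooth closed subschemes of $\pi_*\mathcal M$ with the same generic fibre $A=(\pi_*A_{K'})^G$, hence coincide as schematic closures. No transversal-trait computation on $T$ is needed.
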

\begin{proof}
As usual, $\mathcal N$ can be constructed by choosing a section $\alpha\colon \Psi\ra A[N](K)$, constructing $H\subset A[N](K)\times \Psi$ and taking its closure $\mathcal H\subset \mathcal A\times_S\underline \Psi_S$, and finally the quotient. As the choice of $\alpha$ does not matter, we may assume it is the one obtained by restriction of the section $\Psi'\to A[N](K')$ used to construct $H'$. It follows that \begin{equation}\label{eq:H}
H=H'\cap (A(K)\times_K\Psi)
\end{equation}

By generalities on the Weil restriction \citep[pag. 198]{BLR}, the canonical morphism $\mathcal A\ra \pi_*\mathcal A_T$ is a closed immersion. The natural injection $\Psi\ra \Psi'$ gives a closed immersion $\mathcal A\times_S \Psi_S\ra \pi_*\mathcal A_T\times_S\Psi'_S=\pi_*(\mathcal A_T\times_T\Psi'_T)$. To show that it descends to a closed immersion $\mathcal N\ra \pi_*\mathcal M$, it is enough to show that 
\begin{equation}\label{Hrelation}
\pi_*\mathcal H'\cap (\mathcal A\times_S\Psi_S)=\mathcal H.
\end{equation} 

Notice that \cref{eq:H} realizes \cref{Hrelation} on the level of generic fibres. Now, $\pi_*\mathcal H'$ is \'etale over $S$, and it is a closed subscheme of $\pi_*\mathcal A_T\times_S\Psi'_S$. Hence, it is the schematic closure of its generic fibre, which is $H'$. Then, the intersection $\mathcal H^*:=\pi_*\mathcal H'\cap (\pi_*\mathcal A_T\times_S\Psi_S)$ is clearly still \'etale over $S$, and has generic fibre $H$. Thus $\mathcal H^*$ is the schematic closure of $H$ in $\pi_*\mathcal A_T\times_S\Psi_S$. On the other hand, $\mathcal H\ra \mathcal A\times_S\Psi_S\ra \pi_*\mathcal A_T\times_S\Psi_S$ is a closed immersion, and $\mathcal H$ is \'etale over $S$ and has generic fibre $H$. As $\mathcal H$ and $\mathcal H^*$ are both \'etale over $S$, have same generic fibre and are both closed subschemes of $\pi_*\mathcal A_T\times_S\Psi_S$, they are equal. Since $\mathcal H$ is contained in $\mathcal A\times_S\Psi_S$, so is $\mathcal H^*$ and we obtain \cref{Hrelation}. This proves that we have a closed immersion $\iota \colon\mathcal N\ra \pi_*\mathcal M$.

Now, the restriction of $\iota$ to the generic fibre is the closed immersion $A\ra \pi_*A_{K'}$, which identifies $A$ with $(\pi_*A_{K'})^G$. Both $(\pi_*\mathcal M)^G$ and $\mathcal N$ are both $S$-smooth closed subschemes of $\pi_*\mathcal M$, so both are equal to the schematic closure in $\pi_*\mathcal M$ of their own generic fibre. As they share the same generic fibre, they are equal. 
\end{proof}

\subsection{Test-N\'eron models are N\'eron models}
The objective of this subsection is to prove the following:
\begin{theorem} \label{TA->Neron}
Let $S$ be a locally noetherian, regular $\mathbb Q$-scheme, $D$ a normal crossing divisor on $S$, $\mathcal A/S$ a semiabelian scheme such that its restriction $\mathcal A_U$ to the open $U=S\setminus D$ is abelian and toric additive. Then $\mathcal A_U$ admits a N\'eron model over $S$.
\end{theorem}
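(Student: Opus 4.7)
The plan is to combine the existence of test-Néron models from \cref{main_thm} with an Edixhoven-style tame descent to upgrade the test-Néron model to a genuine Néron model. By \cref{main_thm}, the toric-additivity of $\mathcal{A}_U$ produces a smooth separated group algebraic space $\mathcal{N}/S$ of finite type with $\mathcal{N}_U \cong A$, whose pullback to every strictly henselian trait $Z \to S$ transversal to $D$ is a Néron model of its generic fibre. It remains to verify the full Néron mapping property for $\mathcal{N}$.

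First I would reduce the problem to a DVR-level statement. Since being a Néron model is fppf-local (\cref{fpqclocal}), compatible with smooth base change (\cref{smooth_base_change}), and compatible with localization (\cref{NMlocaliz}), I may assume $S$ is strictly henselian local. By the classical characterization of Néron models through essentially smooth DVR-valued points, the Néron mapping property for $\mathcal{N}$ then reduces to showing that $\mathcal{N}_R$ is a Néron model of its generic fibre for every strictly henselian DVR $R$ equipped with a morphism $f \colon \Spec R \to S$. Such an $f$ is encoded by multiplicities $(a_1,\ldots,a_n) \in \mathbb{Z}_{\geq 0}^n$ recording the orders of vanishing of the parameters cutting out $D_1,\ldots,D_n$; when all $a_i \in \{0,1\}$ the morphism is transversal and the claim is the defining property of a test-Néron model.

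For arbitrary multiplicities, I would form the cover $\pi \colon \Spec R' \to \Spec R$ obtained by adjoining an $a_i$-th root of each parameter, so that the composition $\Spec R' \to S$ becomes transversal to $D$. Then $\mathcal{N}_{R'}$ is a Néron model over $R'$ by the test-Néron property. Since $S$ is a $\mathbb{Q}$-scheme the ramification indices $a_i$ are invertible on $R$, so $\pi$ is a tame Galois cover with group $G = \bigoplus_i \mu_{a_i}(R)$. A DVR analogue of \cref{weil}, obtained by pulling back the identification over the strictly local $S$ along $\Spec R \to S$, then identifies $\mathcal{N}_R$ with the closed subgroup scheme $(\pi_*\mathcal{N}_{R'})^G$ of $G$-fixed points in the Weil restriction of $\mathcal{N}_{R'}$.

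Finally, Edixhoven's descent theorem \citep{edix} asserts precisely that the $G$-fixed subgroup of the Weil restriction of a Néron model along a tame Galois cover of traits is again a Néron model of the corresponding generic fibre, which yields the desired conclusion for $\mathcal{N}_R$. The hardest step I anticipate is establishing the DVR version of \cref{weil} rigorously: one must check that the test-Néron construction of \cref{main_thm} is compatible with pullback along $\Spec R \to S$ in such a way that the identification with $(\pi_*\mathcal{N}_{R'})^G$ survives, even though $\Spec R \to S$ is itself not transversal. The characteristic zero hypothesis enters nowhere in the construction of $\mathcal{N}$; it is used only to guarantee that the covers $\pi$ are tame so that Edixhoven's descent applies, matching the obstruction identified in the introduction to removing the characteristic hypothesis altogether.
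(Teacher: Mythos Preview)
Your overall strategy---construct the test-N\'eron model via \cref{main_thm} and then use a tame cover together with Edixhoven-type descent---is the same as the paper's. But two steps in your execution do not go through as written.

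First, the reduction ``the N\'eron mapping property for $\mathcal N$ reduces to showing that $\mathcal N_R$ is a N\'eron model for every strictly henselian DVR $R\to S$'' is not a classical fact and is not justified. Over a Dedekind base this is fine, but for $\dim S\geq 2$ knowing that every section $\sigma\colon U\to A$ extends after pullback to each DVR $R\to S$ only tells you that the schematic closure $\overline{\sigma(U)}\subset\mathcal N$ surjects onto $S$; it does not give flatness, and without flatness you do not get a section. The paper handles this by a genuinely two-dimensional argument (\cref{dim2}): it uses Raynaud--Gruson flattening to pass to a blow-up $S'\to S$, checks surjectivity of the closure on $S'$ using transversal traits through points of the exceptional divisor, and then descends the resulting section from $S'$ back to $S$ via an infinitesimal lifting argument. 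Higher dimensions are reduced to dimension $2$ by cutting with a hyperplane transversal to $D$ (\cref{weakNMdimn}). Your proposal skips this entire layer.

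Second, your cover is in the wrong place. Taking $R'\to R$ by adjoining $a_i$-th roots of the pulled-back parameters does not make the composite $\Spec R'\to S$ transversal to $D$: the valuations of the $f^*(u_i)$ only get multiplied by the ramification index. What the paper does instead (inside the proof of \cref{dim2}) is take the cover $\pi\colon T\to S$ extracting $a_i$-th roots of the $u_i$ on $S$, apply \cref{weil} to get $\mathcal N=(\pi_*\mathcal M)^G$ \emph{over $S$}, and then observe that the given trait $Z\to S$ \emph{lifts} to a map $Z\to T$ which is transversal to $\pi^{-1}(D)$; since $\mathcal M_Z$ is then a N\'eron model, the section extends there and one pushes it back into $\mathcal N$ via the fixed-point identification. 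So the descent happens over $S$, not over $R$, and the transversality is obtained by lifting the trait to the cover, not by covering the trait. The difficulty you flagged (a ``DVR version of \cref{weil}'') is therefore not the right one; the real missing ingredients are the flattening/blow-up step and the correct placement of the tame cover.
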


In view of \cref{main_thm}, \cref{TA->Neron} is an immediate corollary of the following proposition:
\begin{proposition}\label{test=Neron}
Hypotheses as in \cref{TA->Neron}. Let $\mathcal N/S$ be a test-N\'eron model for $\mathcal A_U$ over $S$. Then $\mathcal N/S$ is a N\'eron model.
\end{proposition}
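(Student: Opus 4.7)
The proof will proceed in three stages, preceded by a reduction.

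\textbf{Reductions.} By \cref{fpqclocal}, being a N\'eron model is fpqc-local on the base, so we may pass to strict henselizations. Given a smooth $T\to S$ with a $U$-morphism $T_U\to A$, the algebraic space $\mathcal N\times_S T\to T$ is again a test-N\'eron model (the property of being a test-N\'eron model is smooth-local, by uniqueness \cref{test-NM_unique} and stability under \'etale base change) and $\mathcal A\times_S T$ remains toric additive by \cref{TAsmooth}. It thus suffices to check that every $U$-section $s\in A(U)$ extends uniquely to an $S$-section $\bar s\in\mathcal N(S)$. Fix henceforth $S$ strictly henselian regular local of characteristic zero, $D=\bigcup_{i=1}^n\divv(t_i)$, and a section $s\in A(U)$.

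\textbf{Tame base change.} Following \cref{subsection:tNM_finite_flat}, choose a Kummer cover $\pi\colon T\to S$ of the form $T=\Spec\mathcal O(S)[X_1,\ldots,X_n]/(X_i^{m_i}-t_i)$ with the $m_i$ coprime to the residue characteristic and large enough that the test-N\'eron model $\mathcal M/T$ of $A_{U_T}$, which exists by \cref{finiteflat_TA} and \cref{main_thm}, admits the torsion sections $b_{\psi'}\colon T\to\mathcal M$ directly over $T$: that is, a section $\alpha\colon\Psi'\to A[N'](\Frac T)$ as in the construction of \cref{main_thm} is available over $\Frac T$. This is possible precisely because the base is a $\mathbb Q$-scheme, so arbitrarily ramified tame Kummer covers are available (and the required torsion points and roots of unity live in a finite extension of $\Frac S$). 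By \cref{weil}, $\mathcal N\cong(\pi_*\mathcal M)^G$ with $G=\Gal(\Frac T/\Frac S)$; the components of the closed fibre $\mathcal M_s$ are naturally parametrized by $\Psi'=\bigoplus_{i=1}^n\Phi'_i$, and $b_{\psi'}$ specializes at the generic point $\zeta_{T,i}$ of each component of $\pi^{-1}(D)$ to the class $\psi'_i\in\Phi'_i$ (Step~5 of the proof of \cref{main_thm}).

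\textbf{Splitting off the component and descending.} At each $\zeta_{T,i}$, the localization of $s_T$ gives an element of $A(\Frac\mathcal O_{T,\zeta_{T,i}}^{sh})$, which via the N\'eron model of $A$ over the DVR $\mathcal O_{T,\zeta_{T,i}}^{sh}$ determines a class $\psi'_i\in\Phi'_i$; by toric additivity, $\psi':=(\psi'_i)_i\in\Psi'$. The difference $s_T-b_{\psi'}|_{U_T}$ then specializes to the identity component at every $\zeta_{T,i}$, so it extends across every codimension-$1$ point of $T\setminus U_T$ to a section of the semi-abelian scheme $\mathcal A_T\subset\mathcal M$ on an open $V\subset T$ whose complement has codimension $\geq 2$. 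Weil's extension theorem for semi-abelian schemes over the regular base $T$ produces a global extension $t_T\in\mathcal A_T(T)$, whence $\bar s_T:=t_T+b_{\psi'}\in\mathcal M(T)$ extends $s_T$. Since $s$ is $K$-rational, $s_T$ is $G$-invariant; by separatedness of $\mathcal M$ the extension $\bar s_T$ is the unique one, hence itself $G$-invariant, and via $\mathcal N=(\pi_*\mathcal M)^G$ it descends to the required $\bar s\in\mathcal N(S)$. Uniqueness of $\bar s$ follows from separatedness of $\mathcal N$.

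The principal obstacle is the semi-abelian Weil-extension step: we need to know that a section of a semi-abelian scheme over the regular base $T$, defined over the complement of the normal crossing divisor and extending across each of its codimension-$1$ generic points, extends globally. This reduces to the classical Weil extension across codimension-$\geq 2$ loci, which is valid for smooth separated commutative group spaces over regular bases because the relevant extension problem reduces via the semi-abelian sequence to one for $\mathbb G_m$-torsors (controlled by $\mathcal O^\times$, which extends by normality) and for abelian schemes. The characteristic zero hypothesis enters exclusively through Step~2, as the only known construction of $b_{\psi'}$-type sections via Galois descent requires tame ramification; extending \cref{weil} to wildly ramified covers is exactly what would be needed to remove the characteristic hypothesis from \cref{intro:mainthm}.
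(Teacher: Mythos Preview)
Your approach differs substantially from the paper's, and the central ``semi-abelian Weil extension'' step contains a genuine gap.

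The justification you give---reducing via a global exact sequence $0\to\mathcal T\to\mathcal A_T\to\mathcal B\to 0$ to the cases of tori and abelian schemes---does not work: there is no such sequence over $T$, since the toric rank of $\mathcal A_T$ is zero over $U_T$ and positive along the boundary. More fundamentally, the classical Weil extension theorem (BLR~4.4) concerns $S$-rational maps $Z\dashrightarrow G$ from a smooth $S$-scheme $Z$; its proof works by finding, for each undefined point $z$, another point $y$ \emph{in the same fibre} $Z_s$ where the map is defined, and then using the group law. When $Z=T$ (the case of a section), the fibres are single points and the argument is vacuous. You have therefore not established that a section of a semi-abelian scheme over a regular base extends across a closed subset of codimension $\geq 2$, nor is this a standard fact one may simply invoke. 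Note that if it were available, your argument would not use characteristic zero at all (your torsion sections $b_{\psi}$ already exist over the strictly henselian $S$ in any characteristic, making the Kummer cover superfluous), which should be a warning sign.

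The paper proceeds quite differently. Over a complete strictly local $S$ of dimension~$2$ (\cref{dim2}), given $\sigma\in A(U)$ one first blows up $S$ at the closed point (iteratedly, via Raynaud--Gruson flattening) to obtain $S'$ on which the closure of $\sigma(U)$ in $\mathcal N_{S'}$ is flat; one then shows this closure is \emph{surjective} by checking, for each point $p$ of the exceptional locus, that $\sigma$ extends over some trait $Z\hookrightarrow S'$ through $p$ transversal to the exceptional divisor. This is where \cref{weil} and the characteristic-zero hypothesis genuinely enter: the composite $Z\to S$ meets $D$ with multiplicities $(m,n)$ typically $>1$, so one passes to the Kummer cover $\pi\colon T\to S$ of that multidegree, lifts $Z$ to a \emph{transversal} trait in $T$, extends there by the test-N\'eron property of $\mathcal M/T$, and descends via $\mathcal N=(\pi_*\mathcal M)^G$. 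Finally the section on $S'$ is pushed down to $S$ by a deformation-theoretic argument (it is constant along the exceptional chain of $\mathbb P^1$'s, then one lifts through infinitesimal thickenings using smoothness and completeness). The case $\dim S>2$ (\cref{weakNMdimn}) is a separate induction via transversal hyperplane sections and the same formal lifting technique; only then does one deduce the full N\'eron property from the weak one.

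In short, the Kummer cover in the paper is not there to produce torsion sections (those already live over $S$) but to convert a non-transversal trait arising from the blow-up into a transversal one upstairs, and that is precisely where tameness---hence characteristic zero---is consumed.
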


We will subdivide the proof of \cref{test=Neron} in two main steps (\cref{dim2,weakNMdimn}).

\begin{proposition} \label{dim2}
In the hypotheses of \cref{test=Neron}, assume $S$ has dimension $2$. Then the test-N\'eron model $\mathcal N/S$ is a weak N\'eron model for $\mathcal A_U$.
\end{proposition}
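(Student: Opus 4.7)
The statement is smooth-local on $S$, so I would reduce to $S = \Spec R$ with $R$ a regular, strictly henselian, local $\mathbb{Q}$-algebra of dimension $2$, closed point $s$, and $D = V(u_1 u_2)$ for a regular system of parameters $(u_1, u_2)$. The sub-cases $s \notin D$, or only one component of $D$ passing through $s$, reduce easily to the test-N\'eron property by sweeping out an \'etale neighborhood of $s$ with transversal traits; only the ``corner'' case requires work. Given a section $\sigma\colon U \to A$, the goal is to construct an extension $S \to \mathcal N$, with uniqueness automatic from separatedness of $\mathcal N$.

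I would first reduce to the identity component. The explicit description $\mathcal N = (\mathcal A \times_S \underline{\Psi}_S)/\mathcal H$ from the proof of \cref{main_thm} gives a ``component class'' for $\sigma$: its image in $\underline{\Psi}$ under the projection $T_\ell A_K/T_\ell A_K^f \twoheadrightarrow X'_i \otimes \mathbb{Z}/N\mathbb{Z}$ is a locally constant function $U \to \Psi$. Since $U$ is connected (complement of a divisor in an irreducible strictly local scheme) and $\Psi$ is discrete, this class is constant, say $\phi_0 \in \Psi$. The construction of \cref{main_thm} supplies a torsion $S$-section of $\mathcal N$ realizing $\phi_0$, so after translation I may assume $\phi_0 = 0$, and any hypothetical extension of $\sigma$ must then factor through the fibrewise-connected part $\mathcal A \subset \mathcal N$.

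Next I would apply tame descent. For integers $m_1, m_2 \geq 1$ (to be chosen), form $\pi\colon T = \Spec R[X_1, X_2]/(X_1^{m_1}-u_1, X_2^{m_2}-u_2) \to S$, a finite tame Galois cover with group $G = \mu_{m_1} \times \mu_{m_2}$. \Cref{weil} gives $\mathcal N \cong (\pi_* \mathcal M)^G$, where $\mathcal M$ is the test-N\'eron model over $T$. Because of the reduction to $\phi_0 = 0$, extending $\sigma$ to $S \to \mathcal A$ is equivalent to producing a $G$-equivariant extension $T \to \mathcal A_T$ of $\sigma_T\colon U_T \to A_T$; the $G$-equivariance is automatic by uniqueness of extensions against the separated target.

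The main obstacle is then producing the extension $T \to \mathcal A_T$ itself. My plan would be to choose $m_i$ divisible by the exponents of $\Phi_1, \Phi_2$ (the component groups of the N\'eron models at the two codimension-$1$ generic points of $D$) and to invoke the one-dimensional descent of \cite{edix} trait-by-trait: toric-additivity plus \cref{prop:monodromy_transversal} ensure that the multipliers $m_i\phi_i$ land us in a situation where, on every strictly henselian trait $Z \to T$ transversal to the preimage of $D$, the pulled-back section $(\sigma_0)_{T,Z}$ extends to a section of $\mathcal A_T$ (not merely of $\mathcal M$). Then, since $\mathcal A_T$ is smooth and quasi-projective over $T$ by \cite[XI.1.16]{raynaud}, an open embedding $\mathcal A_T \hookrightarrow \mathbb{P}^n_T$ and the Hartogs-type extension of rational sections of projective bundles across codimension-$2$ regular points in a regular base yield an extension $\bar\sigma\colon T \to \mathbb{P}^n_T$. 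The hard part will be verifying that $\bar\sigma$ factors through the open subscheme $\mathcal A_T$: this is where toric-additivity is indispensable, as it guarantees that any potential ``escape to infinity'' at $s$ in the toric direction would already be visible on some transversal trait through $s$, which is excluded by the previous step.
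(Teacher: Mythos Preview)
Your proposal has a genuine gap at the final step. The claimed ``Hartogs-type extension of rational sections of projective bundles across codimension-$2$ regular points'' is simply false: the map $\mathbb{A}^2\setminus\{0\}\to\mathbb{P}^1$, $(x,y)\mapsto[x:y]$ does not extend across the origin. Line bundles and their sections do extend across codimension~$2$ on a regular scheme, but the extended sections need not generate, so no morphism to $\mathbb{P}^n$ results. You acknowledge that checking the extension lands in the open $\mathcal A_T\subset\mathbb{P}^n_T$ is ``the hard part'', but the difficulty is one step earlier: the extension $\bar\sigma\colon T\to\mathbb{P}^n_T$ does not exist in the first place. Knowing that $\sigma_T$ extends along every transversal trait through the closed point gives no mechanism to produce a section on all of $T$; that passage from trait-wise to global is precisely the content of the proposition, and you have not supplied it. (A smaller issue: your description of the ``component class'' via a projection of the Tate module is garbled, since $\sigma$ is a $U$-point of $A$, not a Tate-module element; what you want is the class of $\sigma$ in each $\Phi_i$ via the N\'eron model at the codimension-$1$ point $\zeta_i$. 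This step is salvageable but not as written.)

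The paper's argument is structurally different. It does not reduce to the identity component and does not fix a single cover $T\to S$. Instead it passes to the \emph{completion} of $S$, takes the schematic closure of $\sigma(U)$ in $\mathcal N$, and applies Raynaud--Gruson flattening to obtain a blow-up $S'\to S$ (a composite of point blow-ups, so the exceptional fibre $E$ is a chain of $\mathbb{P}^1$'s) over which the closure $\Sigma$ becomes flat. Surjectivity of $\Sigma\to S'$ is then checked point-by-point on $E$: for each $p\in E$ one chooses a trait $Z\hookrightarrow S'$ transversal to $E$; the composite $Z\to S$ has some valuations $(m,n)$ on $(u,v)$, and one forms the cover $T_{m,n}\to S$ tailored to that point, lifts $Z$ transversally into $T_{m,n}$, and uses the test-N\'eron property of $\mathcal M/T_{m,n}$ together with \cref{weil} to produce a section of $\mathcal N$ over $Z$. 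Thus the cover varies with $p$. Finally, $\sigma'\colon S'\to\mathcal N_{S'}$ is descended to $S$ by observing that its restriction to the chain $E$ of $\mathbb{P}^1$'s lands in a single semiabelian fibre $\mathcal N_s$, hence is constant, and then lifting infinitesimally using completeness of $S$ and a comparison of deformation spaces on $S_n$ and $S'_n$.
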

\begin{proof} 
Let $\sigma:U\ra A$ be a section; we want to show that it extends to a section $S\ra \mathcal N$, or equivalently, that the schematic closure $\overline{\sigma(U)}\subset \mathcal N$ is faithfully flat over $S$. The latter may be checked locally for the fpqc topology; hence, we may reduce to the case where $S$ is the spectrum of a complete, strictly henselian local ring. The normal crossing divisor $D$ has at most $2$ components; up to restricting $U$ we may assume they are exactly two, and that $D$ is the zero locus of $uv$, with $u,v$ regular parameters for $\Gamma(S,\mathcal O_S)$.

Notice that the schematic closure $\overline{\sigma(U)}$ may a priori fail to be flat only over the closed points of $S$, as $S\setminus\{s\}$ is of dimension $1$. By the flattening technique of Raynaud-Gruson (\citep[5.2.2]{raynaud-gruson}), there exists a blowing-up $\tilde S\ra S$, centered at $s$, such that the schematic closure of $\sigma(U)$ inside $\mathcal N_{\tilde S}$ is flat over $\tilde S$. Because $S$ has dimension 2, we can find a further blow-up $S'\ra \tilde S$ such that the composition $S'\ra S$ is a composition of finitely many blowing-ups, each given by blowing-up the ideal of a closed point with its reduced structure. It follows that the exceptional fibre $E\subset S'$ of $S'\ra S$ is a chain of projective lines meeting transversally. Let $\Sigma\subset \mathcal N_{S'}$ be the schematic closure of $\sigma(U)$. The morphism $\Sigma\ra S'$ is flat, but may a priori not be surjective. At this point we only know that the image of $\Sigma$ contains $S'\setminus E$. 

We claim that $\Sigma\ra S'$ is surjective. 
Let $p\in E$. It's easy to show that there exists some strictly henselian trait $Z$ with closed point $z$ and a closed immersion $Z\ra S'$ mapping $z$ to $p$ and such that $Z$ meets $E$ transversally. We call $L$ the field of fractions of $\O_Z(Z)$. The section $\sigma\colon U\ra A$ restricts to a section $\sigma_L\colon \Spec L\ra A_L$; to establish the claim, it suffices to show that $\sigma_L$ extends to a section $Z\ra \mathcal N_Z$.  We consider the composition $\phi:Z\ra S'\ra S$ and the pullbacks $\phi^*(u),\phi^*(v)\in \O_Z(Z)$. Let $m,n\in \mathbb Z_{\geq 1}$ be their respective valuations. Now let $\pi\colon T\ra S$ be the finite flat morphism given by extracting an $m$-root of $u$ and an $n$-root of $v$, that is, 
$$T=\Spec \frac{\O_S(S)[x,y]}{x^m-u,y^n-v}.$$
Then $T$ is itself the spectrum of a regular, strictly henselian local ring and the preimage $\pi^{-1}(D)$ is the zero locus of $xy$ and hence a normal crossing divisor. The pullback of $\mathcal A$ via $T\ra S$ is still toric additive (\cref{finiteflat_TA}) and therefore we can construct a test-N\'eron model $\mathcal M/T$. Using the hypothesis that the base $S$ has equicharacteristic zero, we may apply the results of \cref{subsection:tNM_finite_flat}: writing $X=\pi_*\mathcal M$ for the Weil restriction along $\pi$ and $G:=\Aut_S(T)=\mu_m\oplus\mu_n$, we have by \cref{weil} that $X^G=\mathcal N$.

Now, as $Z$ is a strictly henselian and of residue characteristic zero, $\O(Z)$ contains all roots of elements of $\O(Z)^{\times}$, and we can find uniformizers $t_u,t_v\in \O_Z(Z)$ such that $t_u^m=\phi^*(u)$ and $t_v^n=\phi^*(v)$. These elements give us a lift of $\phi \colon Z\ra S$ to $\psi\colon Z\ra T$. Then $\psi$ is a closed immersion meeting $f^{-1}(D)$ transversally. This means that the base change $\mathcal M_{Z}/Z$ is a N\'eron model of its generic fibre. Consider the section $\sigma_L\colon \Spec L\ra A_L$. Composing it with the closed immersion $A_L=(\pi_*\mathcal M)_L^G\hookrightarrow (\pi_*\mathcal M)_L$ gives, by definition of Weil restriction, a morphism $\Spec L\times_ST\ra \mathcal M_L$.  Precomposing with $(\id,\psi)\colon\Spec L\ra \Spec L\times_ST$, we obtain a section $\til{\sigma}_L\colon \Spec L\ra \mathcal M_L$.  As $\mathcal M_Z/Z$ is a N\'eron model of its generic fibre, $\til{\sigma}_L$ extends uniquely to a section $Z\ra \mathcal M_Z$. This gives us a morphism of $T$-schemes $Z\ra \mathcal M$ and by composition a $T$-morphism $Z\times_ST\ra Z\ra \mathcal M$, that is, a section $m\in X(Z)$ of the Weil restriction. Notice that the generic fibre of $m$ is $\sigma_L$, which lands in the part of $X$ fixed by $G$; as $X^G=\mathcal N$ is a closed subscheme of $X$ we deduce that $m$ lands inside $\mathcal N$. So $m\in \mathcal N(Z)$ is the required extension of $\sigma_L$ and we win.

As $\Sigma\ra S'$ is faithfully flat, separated and birational, it is an isomorphism. Hence $\sigma\colon U\ra A$ extends to a section $\sigma'\colon S'\ra \mathcal N_{S'}$. We are going to show that $\sigma'$ descends to a section $\theta:S\ra \mathcal N$. The restriction of $\sigma'$ to $E$ maps a connected chain of projective lines to a connected component of $\mathcal N_s$ (where $s$ is the closed point of $S$). Every connected component of $\mathcal N_s$ is isomorphic to the semi-abelian variety $\mathcal N^0_s$, hence does not contain projective lines. It follows that $\sigma'_{|E}$ is constant and that it descends to a morphism $\Spec k(s)\ra \mathcal N_s$. Let $\mathcal J$ be the ideal sheaf of the exceptional fibre $E\subset S'$ and define $S'_n\subset S'$ to be the closed subscheme defined by $\mathcal J^{n+1}$ for every $n\geq 0$. Similarly let $S_n:=\Spec \O_S(S)/\m^{n+1}$, where $\m$ is the maximal ideal of $\O_S(S)$. We have shown that $\sigma'_{|E}:S'_0\ra \mathcal N$ descends to a morphism $\theta_0\colon S_0\ra \mathcal N$. Now, by smoothness of $\mathcal N$, every morphism $S_{j-1}\ra \mathcal N$ admits a lift $S_j\ra \mathcal N$; the set of such lifts is given by $H^0(S_0,\Omega^1_{\mathcal N_{S_0}/S_0}\otimes_{\O_{S_0}}\m^j/\m^{j+1}).$ The canonical morphism 
$$H^0(S_0,\Omega^1_{\mathcal N_{S_0}/S_0}\otimes_{\O_{S_0}}\m^j/\m^{j+1})\ra H^0(S'_0,\Omega^1_{\mathcal N_{S'_0}/S'_0}\otimes_{\O_{S'_0}}\mathcal J^j/\mathcal J^{j+1})$$
is an isomorphism, due to the fact that the space of global sections of $\mathcal J^j/\mathcal J^{j+1}=\O_{S'_0}(j)$ is equal to $\m^j/\m^{j+1}$. Thus the set of liftings of $\alpha\in\Hom_S(S_j,\mathcal N)$ to $\Hom_S(S_{j+1},\mathcal N)$ is naturally in bijection with the set of liftings of $\alpha_{|S'_j}\in\Hom_S(S'_j,\mathcal N)$ to $\Hom_S(S'_{j+1},\mathcal N)$. The reductions modulo $\mathcal J^j$ of $\sigma'\colon S'\ra \mathcal N_{S'}$ provides a compatible set of liftings of $\sigma'_{|S'_0}$, and therefore a compatible set of liftings of $\theta_0$; which in turn by completeness of $S$ yield the desired morphism $S\ra \mathcal N$.
\end{proof}

The next step is extending the result to the case of $\dim S>2$. 

\begin{proposition} \label{weakNMdimn}
In the hypotheses of \cref{test=Neron}, $\mathcal N/S$ is a weak N\'eron model. 
\end{proposition}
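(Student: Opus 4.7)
The plan is to deduce the general case by induction on $n := \dim S$, with base case $n = 2$ provided by \cref{dim2} (the cases $n \leq 1$ following from \cref{dim1} and the test-N\'eron property). Working fpqc-locally as in the proof of \cref{dim2}, we may assume $S = \Spec R$ is complete strictly henselian regular of dimension $n \geq 3$, and we must extend a given section $\sigma \colon U \to A$ to a section $S \to \mathcal N$.

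For the inductive step, I would choose a regular parameter $f \in \mathfrak m_s$ such that the hyperplane section $H := V(f) \subset S$ is regular, is not contained in any component of $D$, and such that $D \cap H$ is still a normal crossing divisor on $H$. Such an $f$ exists by picking a generic element of $\mathfrak m_s$: for instance, if $D = V(u_1 \cdots u_r)$ with $u_1, \ldots, u_n$ a regular system of parameters and $r < n$, we can take $f = u_{r+1}$. By the smooth-local behaviour of test-N\'eron models on the target, $\mathcal N_H/H$ is a test-N\'eron model of the restriction $\mathcal A|_{H \cap U}$, which remains toric additive by \cref{remark:prop_TA}. The inductive hypothesis then yields a (unique, by separatedness of $\mathcal N$) extension $\tau \colon H \to \mathcal N$ of $\sigma|_{H \cap U}$.

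The main technical obstacle is to lift $\tau$ across the infinitesimal neighborhoods of $H$ in $S$ to obtain a global section. I would reproduce the formal-smoothness argument at the end of the proof of \cref{dim2}: letting $H_k := V(f^{k+1})$, at each stage the obstruction to lifting $\tau_k \colon H_k \to \mathcal N$ to $\tau_{k+1} \colon H_{k+1} \to \mathcal N$ lies in $H^1(H, \tau^* \Omega^1_{\mathcal N/S} \otimes (f^{k+1})/(f^{k+2}))$, which vanishes as $H$ is affine. Completeness of $R$ with respect to $(f)$ (which follows from the $\mathfrak m$-adic completeness together with $f \in \mathfrak m$ being a regular parameter) then assembles the compatible system $(\tau_k)$ into a morphism $\theta \colon S \to \mathcal N$ with $\theta|_H = \tau$. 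Finally, $\theta|_U$ and $\sigma$ are two $U$-morphisms to $\mathcal N_U = A$ agreeing on the nonempty open $H \cap U$ of the integral scheme $U$; separatedness of $\mathcal N$ forces $\theta|_U = \sigma$, completing the induction.
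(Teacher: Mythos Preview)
Your argument has a genuine gap in the final step. You claim that $\theta|_U$ and $\sigma$ agree on ``the nonempty open $H \cap U$ of the integral scheme $U$'', but $H \cap U$ is a \emph{closed} subscheme of $U$ of codimension one, not a dense open. Two morphisms from $U$ to a separated scheme which agree on a proper closed subscheme need not coincide; for instance, distinct sections of an abelian scheme can certainly agree on a hyperplane section. Thus there is no reason your lifted section $\theta$ should restrict to $\sigma$ on $U$: when you lift $\tau$ through the infinitesimal neighbourhoods $H_k$, the choices at each stage form a torsor under $H^0(H,\tau^*\Omega^1_{\mathcal N/S}\otimes (f^{k+1})/(f^{k+2}))$, and you have provided no mechanism to pin them down.

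The paper repairs exactly this by inserting an extra step before the hyperplane argument: one first extends $\sigma$ to all of $V = S \setminus \{s\}$, using the inductive hypothesis (every local ring of $V$ has dimension at most $n-1$, and $\mathcal A_V$ remains toric additive). Then, during the infinitesimal lifting along $H$, one uses the Hartogs-type identification $H^0(H,\mathcal F) = H^0(H\cap V,\mathcal F|_{H\cap V})$, valid because $\dim H = n-1 \geq 2$ and $H\setminus(H\cap V)=\{s\}$ has codimension $\geq 2$ in $H$. This shows that the lift at each stage is uniquely determined by its restriction to $V$, and choosing these lifts to match the restrictions of the already-constructed $\sigma\colon V\to\mathcal N$ forces the resulting $\theta$ to agree with $\sigma$ on $V$, hence on $U$. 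This control of the lifts via $V$ is the missing idea.

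Two smaller points: your choice $f=u_{r+1}$ only makes sense when $r<n$; when $D$ has $n$ components one needs something like $f=u_1-u_n$ instead. Also, $\mathcal N_H/H$ is a test-N\'eron model not by ``smooth-local behaviour'' (the inclusion $H\hookrightarrow S$ is a closed immersion, not smooth) but because any trait $Z\to H$ transversal to $D\cap H$ composes to a trait $Z\to S$ transversal to $D$.
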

\begin{proof}
As in the proof of \cref{dim2}, we may assume that $S$ is the spectrum of a complete strictly henselian local ring. We proceed by induction on the dimension of $S$. If the dimension is $1$, the statement is clearly true, and the case of dimension $2$ is the statement of \cref{dim2}. So we let $n\geq 3$ be the dimension of $S$ and we suppose that the statement is true when $S$ has dimension $n-1$. Let $\sigma\colon U\ra A$ be a section. Because $V=S\setminus\{s\}$ has dimension $n-1$, and because $\mathcal A_V$ is still toric additive (\cref{TAsmooth}), $\sigma$ extends to $\sigma\colon V\ra \mathcal N_V$.

Next, we cut $S$ with a hyperplane $H$ transversal to all the components of the normal crossing divisor $D$, but paying attention to choosing $H$ so that $D\cap H$ (with its reduced structure) is still a normal crossing divisor on $H$. This is always possible: consider a system of regular parameters $u_1,u_2,\ldots,u_n$ for $S$ such that $D$ is the zero locus of $u_1u_2\cdots u_r$ for some $r\leq n$; then $H$ can be chosen to be, for example, the hypersurface cut by $u_1-u_n$. Because $H$ is transversal to $D$, it is clear that the base change $\mathcal N_H/H$ is still a test-N\'eron model. By our inductive assumption on the dimension of the base, $\sigma_{|H}\colon H\cap U\ra A$ extends to $\theta_0\colon H\ra \mathcal N$. Now we would like to put together the data of $\sigma$ and $\theta_0$ to extend $\sigma\colon V\ra \mathcal N_V$ to a section $\theta\colon S\ra \mathcal N$. Let $\mathcal J\subset \O_S$ be the ideal sheaf of $H$ and for every $j\geq 1$ define $S_j$ to be the closed subscheme cut by $\mathcal J^{j+1}$. We have a morphism $\theta_0\colon H=S_0\ra \mathcal N$. By smoothness of $\mathcal N$, there exists for every $j\geq 0$ a lifting of $\theta_0$ to an $S$-morphism $S_j\ra \mathcal N$. The set of liftings of an $S$-morphism $S_{j-1}\ra \mathcal N$ to an $S$-morphism $S_{j}\ra \mathcal N$ is given by the global sections of the locally-free sheaf $\mathcal F:=\Omega^1_{\mathcal N/S}\otimes \mathcal J^j/\mathcal J^{j+1}$ on $S_0$. Because $\dim S_0\geq 2$ and $V=S\setminus\{s\}$, we have $H^0(S_0,\mathcal F)=H^0(V\cap S_0,\mathcal F_V)$, and the latter parametrizes liftings of morphisms $S_{j-1}\cap V\ra \mathcal N$ to $S_{j}\cap V\ra \mathcal N$. The section $\sigma\colon V\ra \mathcal N_V$ gives a compatible choice of lifting for every $j\geq 0$, and we get by completeness of $S$ a morphism $S\ra \mathcal N$ agreeing with $\sigma$ on $V$, as we wished. 
\end{proof}

We can now conclude the proof of \cref{test=Neron}.
\begin{proof}[Proof of \cref{test=Neron}]
Let $T\ra S$ be a smooth morphism; then $\mathcal A_T/T$ is toric-additive by \cref{TAsmooth} and the base change $\mathcal N_T/T$ is a test-N\'eron model. Now, given $\sigma_U\colon T_U\ra \mathcal A$, we obtain a section $T_U\ra \mathcal A\times_UT_U$, which by \cref{weakNMdimn} extends to a section $T\ra \mathcal N_T$. The latter is the datum of an $S$-morphism $\sigma\colon T\ra \mathcal N$ extending $\sigma_U$.
\end{proof}

We give a corollary of \cref{TA->Neron}.
\begin{corollary}
Let $S$ be a connected, locally noetherian, regular $\mathbb Q$-scheme, $D$ a regular divisor on $S$, $A$ an abelian scheme over $U=S\setminus D$ extending to a semi-abelian scheme $\mathcal A/S$. Then $A$ admits a N\'eron model over $S$.
\end{corollary}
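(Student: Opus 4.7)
The plan is to reduce the statement directly to \cref{TA->Neron}. Two ingredients must be verified: that a regular divisor $D$ on a regular scheme $S$ is automatically a normal crossing divisor, and that the toric-additivity hypothesis of the theorem is automatically satisfied in this setting.

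For the first ingredient, I would argue that since $D$ is a regular closed subscheme of pure codimension one inside the regular scheme $S$, its conormal sheaf is locally free of rank one, so $D$ is locally principal. At any $s \in D$, a local equation $f$ for $D$ in $\mathcal O_{S,s}$ must then be part of a regular system of parameters, because the quotient $\mathcal O_{S,s}/(f)$ is regular of Krull dimension one less than $\mathcal O_{S,s}$. Hence $D$ is in fact a strict normal crossing divisor Zariski-locally, with exactly one branch passing through each of its points.

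For the second ingredient, I would verify toric additivity one geometric point at a time. At a geometric point $s \notin D$ it is trivial (\cref{remark:prop_TA}(iii)). For $s \in D$, observe that $D \times_S \Spec \mathcal O^{sh}_{S,s}$ is cut out by a single regular parameter of the strictly henselian regular local ring $\mathcal O^{sh}_{S,s}$, so it is itself the spectrum of a strictly henselian regular local ring, hence a domain; in particular it has a single irreducible component. This is precisely the case $n = 1$ of \cref{remark:prop_TA}(iv), where the purity map coincides with the (surjective) specialization map and is automatically an isomorphism.

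Having verified both hypotheses, \cref{TA->Neron} applies and produces the desired N\'eron model $\mathcal N/S$. I do not anticipate any real obstacle: the corollary is essentially an immediate consequence of the main theorem, once one records the observation that a normal crossing divisor with a single local branch carries no nontrivial purity condition to check.
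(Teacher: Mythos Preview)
Your proof is correct and follows the same approach as the paper: at every geometric point the divisor has at most one branch, so toric additivity holds by \cref{remark:prop_TA}(iii)--(iv), and \cref{TA->Neron} applies. The paper's proof is simply a terser version of yours, omitting the verification that a regular divisor is normal crossing and the explicit reference to the $n\le 1$ cases of \cref{remark:prop_TA}.
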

\begin{proof}
At every geometric point $s$ of $S$, $D$ has only one irreducible component. It follows that $\mathcal A/S$ is toric-additive and we conclude by \cref{TA->Neron}.
\end{proof}

\begin{remark}
It is not clear whether the hypothesis that $S$ has equicharacteristic zero can be removed, at least partially, from \cref{TA->Neron}. In the course of the proof, the characteristic assumption is used only in the proof of \cref{dim2}, where we apply \cref{weil} on descent of N\'eron models along tamely ramified covers of discrete valuation rings. Removing the characteristic assumption would involve generalizing \cref{weil} to the case of wildly ramified covers.
\end{remark}

\section{A converse statement}\label{section5}
We conclude with a partial converse statement to \cref{TA->Neron}. 
\begin{theorem}\label{thm:converse}
Let $S$ be a locally noetherian, regular $\mathbb Q$-scheme, $D$ a normal crossing divisor on $S$, $\mathcal A/S$ a semiabelian scheme such that its restriction $A=\mathcal A_U$ to $U=S\setminus D$ is abelian. Assume that $A/U$ and its dual $A'/U$ admit N\'eron models $\mathcal N/S$ and $\mathcal N'/S$ which are at the same time also test-N\'eron models.

Then $\mathcal A/S$ is toric additive.
\end{theorem}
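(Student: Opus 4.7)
My plan is to reduce to the case where $S$ is strictly local and then extract toric additivity by computing the group of components $\Phi$ of $\mathcal{N}_s$ at the closed point in two different ways. Both the hypothesis and the conclusion are étale-local: by \cref{NMlocaliz} Néron models are stable under strict henselization, the test-Néron property is étale-local by definition, and toric additivity is étale-local (\cref{remark:TA_etale_local}). So I may assume $S$ is strictly local with $D = D_1 \cup \cdots \cup D_n$ a strict normal crossing divisor, and work with the character groups $X, X_i, X', X'_i$ and monodromy pairings $\phi_i\colon X'_i \to X_i^\vee$.

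For the first description, since $\mathcal{N}$ is a Néron model the derivation in \cref{group_comps} generalizes without change to higher dimensions (it uses only Hensel's lemma, the fact that $\mathcal{N}^0 = \mathcal{A}$ is semiabelian, and $T_l\mathcal{A}^f = T_lA(K^s)^G$ from \cref{fixedpartG}). Running the $G$-cohomology long exact sequence on $0 \to T^G \to T \to T/T^G \to 0$ and using $T/T^G = X' \otimes \mathbb{Z}_l$ together with $(\sigma_i - 1)(T) \subseteq T^{t_i} = X_i^\vee \otimes \mathbb{Z}_l(1)$ identifies, for each prime $l$,
\[
 {}_l\Phi \;=\; \ker\bigl(g \otimes \mathbb{Q}_l/\mathbb{Z}_l\bigr), \qquad g := \bigoplus_i \phi_i \circ p' : X' \to \bigoplus_i X_i^\vee,
\]
so that $\Phi \cong (\mathrm{coker}\,g)_{\mathrm{tors}}$. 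For the second description, pick a strictly henselian transversal trait $Z \to S$ with closed point mapping to $s$ (necessarily $a_i = 1$ for all $i$). Since $\mathcal{N}$ is a test-Néron model, $\mathcal{N}_Z/Z$ is the Néron model of its generic fibre, and its component group at the closed point of $Z$ agrees with $\Phi$ (both residue fields are separably closed). Applying \cref{thm:grp_comps} and the factorization \cref{prop:monodromy_transversal} gives
\[
 \Phi \;=\; \ker\bigl(\phi_f \otimes \mathbb{Q}/\mathbb{Z}\bigr) \;\cong\; \mathrm{coker}(\phi_f), \qquad \phi_f \;=\; p^\vee \circ g : X' \to X^\vee.
\]

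Equating the two descriptions reduces the theorem to a purely algebraic comparison. In the \emph{weakly toric additive} case $\mu(s) = \sum_i \mu(\zeta_i)$, all maps $p', \bigoplus \phi_i, p^\vee$ are injective endomorphisms between free $\mathbb{Z}$-modules of the same rank, whose cokernels have orders $|Q'|$, $\prod|\Phi_i|$, $|Q|$ respectively (with $Q = \mathrm{coker}\,p$, $Q' = \mathrm{coker}\,p'$). Multiplicativity of determinants gives $|\Phi|_{\mathrm{N\acute{e}ron}} = \prod|\Phi_i|\cdot|Q'|$ and $|\Phi|_{\mathrm{test}} = |Q|\cdot \prod|\Phi_i|\cdot|Q'|$, and equating these forces $|Q| = 1$, i.e.\ $p$ is an isomorphism, which by \cref{lemma:A_and_A'} is toric additivity of $\mathcal{A}$. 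Under weak toric additivity the dual hypothesis on $\mathcal{N}'$ is then automatic, but it becomes essential to handle the non-weakly toric additive case, which is the main obstacle: if $r := \sum \mu(\zeta_i) - \mu(s) > 0$, then the exact sequence $0 \to \ker(p^\vee) \to \mathrm{coker}(g) \to \mathrm{coker}(\phi_f) \to \mathrm{coker}(p^\vee) \to 0$ attached to the composition only yields $|\det M|\cdot|Q_{\mathrm{tors}}| = 1$ (where $M$ is the induced map $\ker(p^\vee) \to \mathbb{Z}^r$ onto the free part of $\mathrm{coker}(g)$), giving $Q_{\mathrm{tors}} = 0$ but not $Q = 0$.

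To close this gap I would apply the same analysis symmetrically to $\mathcal{N}'$, obtaining the dual constraint $Q'_{\mathrm{tors}} = 0$, and then invoke the non-degenerate Grothendieck pairing $\Phi \times \Phi' \to \mathbb{Q}/\mathbb{Z}$ (which pairs the two component groups into Pontryagin duals): compatibility of this pairing with the composite monodromy factorizations $\phi_f$ and $\phi'_f = \phi_f^\vee$ forces a positivity/definiteness on the induced form $X \times X \to \mathbb{Z}$ (after a polarization, as in \cref{thm:monodromy_pairing}) which is incompatible with a non-trivial free cokernel. Hence $r = 0$, weak toric additivity holds, and the comparison of orders above yields $Q = 0$. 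The hard part is precisely verifying this last positivity obstruction cleanly—this is the "abstract algebra statement" alluded to in the introduction, and it is the reason both $\mathcal{N}$ and $\mathcal{N}'$ appear in the hypotheses.
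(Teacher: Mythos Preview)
Your two computations of $\Phi$ are sound, and your sharpening of \cref{prop:grp_comps} from an inclusion to the equality ${}_l\Phi = \ker(g\otimes\Q_l/\Z_l)$ is correct (the connecting map in the long exact sequence really does land, factor by factor, in the saturated submodule $T^{t_i}\subset T^G$). Your order count in the weakly toric additive case is also fine. The genuine gap is exactly where you say it is: the non-weakly toric additive case. Your proposed fix --- apply the dual hypothesis to get $Q'_{\mathrm{tors}}=0$ and then invoke the Grothendieck pairing plus some unspecified ``positivity obstruction'' --- is not an argument. Nothing you have written rules out $r>0$: all you have extracted so far is that $\ker p^\vee$ maps isomorphically onto a free complement of $(\coker g)_{\mathrm{tors}}$ inside $\coker g$, which is always true over $\Q$ and places no integral constraint by itself.

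The paper closes this gap differently, and the mechanism is instructive. First it applies Zarhin's trick: replacing $A$ by $A^4\times A'^4$ absorbs the dual hypothesis into a single \emph{principal} polarization, so that $X=X'$, $p=p'$, and each $\phi_i$ becomes symmetric positive definite. This is the real role of the hypothesis on $\mathcal N'$; the Grothendieck pairing never enters. Second, the paper runs induction on $n$, reducing to a two-block purity map $A=\binom{P}{Q}\colon X\hookrightarrow Y_1\oplus Y_2$ with $\Psi=\mathrm{diag}(\Psi_1,\Psi_2)$ positive definite. Your comparison of the two descriptions of $\Phi$ then becomes $\coker(A^t\Psi)=\coker(A^t\Psi A)$, i.e.\ $\im(A^t\Psi)=\im(A^t\Psi A)$. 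Since $A^t\Psi A$ is injective (positive definite), there is a unique $\theta$ with $A^t\Psi A\,\theta = A^t\Psi$, whence $\theta A=\id_X$; setting $\chi_1=\theta_1 P$, $\chi_2=\theta_2 Q$ one gets $\chi_1+\chi_2=\id$ and $(P^t\Psi_1 P)\chi_2=(Q^t\Psi_2 Q)\chi_1$. Because $A^t\Psi A$ is real symmetric positive definite, the $\chi_j$ are diagonalisable over $\R$; a short computation with the positive semidefinite forms $P^t\Psi_1P$, $Q^t\Psi_2Q$ forces every eigenvalue of $\chi_2$ into $[0,1]$, and integrality of the characteristic polynomial then pins them to $\{0,1\}$. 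Thus $\chi_1,\chi_2$ are complementary idempotents, $X=\ker P\oplus\ker Q$, and $A$ is an isomorphism. This simultaneously establishes weak toric additivity and the vanishing of $\coker p$, without separating the two cases as you do.
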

\begin{proof}
We may reduce to the case where $A$ is principally polarized. Indeed, $A^4\times A'^4$ is principally polarized (by Zarhin's trick) and still admits a N\'eron model $\mathcal N^4\times \mathcal N'^4$ which is a test-N\'eron model. If the unique semiabelian extension $\mathcal A^4\times \mathcal A'^4$ is toric additive, then so is $\mathcal A$.

Next, we may restrict to the case where $S$ is strictly henselian, by \cref{NMlocaliz} and the fact that toric additivity is defined at strict henselizations. Now we argue by induction on the number of irreducible components of $D$. If $D$ is empty, $\mathcal A/S$ is automatically toric additive. We let then $n>0$ be the number of components of $D$ and assume that the result is true for divisors with less than $n$ irreducible components. 

We write $\zeta_1,\ldots,\zeta_n$ for the generic points of $D$, and $X_1,\ldots,X_n$ for the groups of characters of the maximal tori of the fibres of $\mathcal A$ over the $\zeta_i$'s. Let also $\zeta$ be the generic point of $D_2\cap D_3\cap\ldots\cap D_n$, and $Y$ the corresponding group of characters. Let $S'\to S$ be a strict henselization at a geometric point lying over $\zeta$. By inductive hypothesis, $\mathcal A_{S'}/S'$ is toric additive. That is, the purity map at $\zeta$, $p_{\zeta}\colon Y\to X_2\oplus\ldots\oplus X_n$, is an isomorphism. To prove toric additivity of $\mathcal A/S$, that is, that the purity map $X\to X_1\oplus \ldots X_n$ is an isomorphism, it suffices therefore to show that the natural morphism
$$X\to X_1\oplus Y$$
is an isomorphism. Let us write $Y_1:=X_1$, $Y_2:=Y$.
We make a choice of bases for $X, Y_1, Y_2$, so that these groups are identified with their duals with values in $\Z$. We consider the diagram of free finitely generated abelian groups
\begin{equation}\label{eq:monodromy_matrices}
\begin{tikzcd}X \ar[r, "A"] & Y_1 \oplus Y_2 \ar[r, "\Psi"] & Y_1\oplus Y_2 \ar[r, "A^t"] & X
\end{tikzcd}
\end{equation}
where:
\begin{itemize}
\item the injective matrix $A$ is equal to $\begin{pmatrix} P \\ Q \end{pmatrix}$, with $P$ and $Q$ the surjective matrices corresponding to the two specialization maps.
\item $\Psi=\begin{pmatrix}
\Psi_1 & 0\\
0 & \Psi_2
\end{pmatrix}$, where $\Psi_1\colon Y_1\to Y_1'^{\vee}=Y_1$ is the monodromy pairing at $\zeta_1$, and $\Psi_2\colon Y_2\to Y_2$ is the direct sum of the monodromy pairings at $\zeta_2,\ldots,\zeta_n$. By \cref{thm:monodromy_pairing}, $\Psi_1$ and $\Psi_2$ are symmetric, positive definite.
\item $A^t$ is the transpose of $A$.
\end{itemize}

Tensoring with $\Q/\Z$ we obtain a new diagram
\begin{equation}\label{eq:monodromy_matrices_res}
\begin{tikzcd}\bar X \ar[r, "\bar A"] & \bar Y_1 \oplus \bar Y_2 \ar[r, "\bar \Psi"] & \bar Y_1\oplus \bar Y_2 \ar[r, "\bar A^t"] & \bar X
\end{tikzcd}
\end{equation}

Let $Z\to S$ be a transversal trait mapping the closed point to the closed point. The composition \eqref{eq:monodromy_matrices} is the monodromy pairing for $\mathcal A_Z$, by \cref{prop:monodromy_transversal}. By \cref{thm:grp_comps} the component group $\Upsilon$ of the N\'eron model of $\mathcal A_Z$ over $Z$ is $\ker(\bar A^t\bar\Psi\bar A)=\coker(A^t\Psi A)$. On the other hand, the component group $\Phi$ of $\mathcal N/S$ at the closed point is contained in $\ker(\bar P^t\bar\Psi_1\bar P)\cap \ker(\bar Q^t\bar \Psi_2\bar Q)$, by \cref{prop:grp_comps}. Our hypotheses tell us that $\Psi=\Phi$. 

\begin{claim}
$\ker(\bar P^t\bar\Psi_1\bar P)\cap \ker(\bar Q^t\bar \Psi_2\bar Q)=\coker(A^t\Psi)$
\end{claim}
\begin{proof}
As $P$ and $Q$ are surjective homomorphisms of free abelian groups, $P^t$ and $Q^t$ are injective with free cokernel. If tollows that $\bar P^t$ and $\bar Q^t$ are injective. The statement becomes $\coker(A^t\Psi)=\ker(\bar\Psi_1\bar P)\cap \ker(\bar \Psi_2\bar Q)=\ker(\bar\Psi\bar A)$. Now, $\Psi A$ is injective; it's a simple check that an injective morphism $f$ of free finitely generated $\Z$-modules satisfies $\coker(f^t)=\ker(f\otimes\Q/\Z)$. This proves the claim.
\end{proof}

The canonical isomorphism $\Psi=\Phi$ gives us a natural injective morphism of finite abelian groups $\coker(A^t\Psi)\into \coker(A^t\Psi A)$. Clearly, there is also a natural quotient map between the two, and it follows that $$\coker(A^t\Psi)=\coker(A^t\Psi A).$$ 

This can be restated by saying $\im(A^t\Psi)=\im(A^t\Psi A)$. As $A^t\Psi A$ is injective, we obtain a unique group homomorphism $\theta\colon Y_1\oplus Y_2\to X$, such that 
\begin{equation}\label{equation_boh}
A^t\Psi A\theta=A^t\Psi.
\end{equation} We write $\theta=(\theta_1,\theta_2)$ with $\theta_1\colon Y_1\to X$ and $\theta_2\colon Y_2\to X$. Because $A^t\Psi A$ is injective, we immediately get 
\begin{equation}\label{eq:theta}\theta A=\id_X.
\end{equation}

We consider the two endormorphisms $\chi_1=\theta_1P$ and $\chi_2\colon\theta_2Q$ of $X$. Equation \eqref{eq:theta} gives us 
\begin{equation}\label{chiechi}
\chi_1+\chi_2=\id_X.\end{equation}

Restricting \eqref{equation_boh} to $Y_1$, we get $P^t\Psi_1=A^t\Psi A\theta_1$. Precomposing with $P$, we find 
\begin{equation}\label{eq:bohboh}
P^t\Psi_1P=A^t\Psi A\chi_1=P^t\Psi_1 P\theta_1+Q^t\Psi_2 Q\theta_1.\end{equation}

Together with \eqref{chiechi}, this implies that 
\begin{equation}\label{MchiNchi}
P^t\Psi_1P\chi_2=Q^t\Psi_2Q\chi_1.
\end{equation}

Let $M=P^t\Psi_1P$ and $N=Q^t\Psi_2Q$. They are both positive semidefinite matrices, and we have the system given by \eqref{chiechi} and \eqref{MchiNchi}
$$
\begin{cases}
M\chi_2=N\chi_1\\
\chi_1+\chi_2=1
\end{cases}
$$
It follows that $M=(M+N)\chi_2$. 

We will now look at our maps over the real numbers $\R$. For a homomorphism $f$ between free, finitely generated abelian groups, we denote by $\widetilde f$ the induced linear map of $\R$-vector spaces.

As $A^t\Psi A$ is symmetric positive definite, $\widetilde{A^t}\widetilde{\Psi}\widetilde A$ is invertible in $\End(X\otimes \R)$. It follows from \eqref{eq:bohboh} that $\wt{\chi_1}$ is symmetric, hence, by the spectram theorem, diagonalizable with real eigenvalues. The same holds for $\wt \chi_2$.

\begin{claim}
Both $\chi_1$ and $\chi_2$ are idempotent.
\end{claim}
\begin{proof}
It suffices to check that $\wt{\chi}_1$ and $\wt{\chi}_2$ are idempotent. As they are diagonalizable, this amounts to showing that each of their eigenvalues is either $0$ or $1$.
By contradiction, assume there exists $c\in \R\setminus\{0,1\}$  eigenvalue for $\chi_2$, and let $v\in X\otimes \R$ be a non-zero eigenvector for $c$. Then 
$$v^t\wt Mv=v^t(\wt M+\wt N)\chi_2v=v^t(\wt M+\wt N)cv.$$
Letting $\alpha=v^t\wt Mv\geq 0$ and $\beta=v^t\wt Nv\geq 0$, we find $\alpha=(\alpha+\beta)c$. As $c\neq 0,1$, we see that $\alpha$ and $\beta$ are both non-zero. It follows that $0<c<1$. In particular, all eigenvalues of $\chi_2$ are contained in the interval $[0,1]$. However, $\chi_2$ is an integer-valued matrix; it follows that its characteristic polynomial  $f_{\chi_2}(t)\in \Z[t]$ is equal to $t^mg(t)$ for some $g(t)\in \Z[t]$ with $g(0)\neq 0$. Moreover, $g(0)$ is the product of the non-zero eigenvalues of $\chi_2$. As one of them, $c$, is smaller than $1$, it follows that $0<g(0)<1$. This is absurd, since $g(0)\in \Z$.
\end{proof}

It follows from the claim that $\chi_1\chi_2=\chi_1(1-\chi_1)=\chi_1-\chi_1^2=0$. Similarly $\chi_2\chi_1=0$. This implies that $$X=\ker \chi_1\oplus \ker \chi_2.$$ Indeed, any $x\in X$ can be written as $x=\chi_1(x)+\chi_2(x)\in \ker \chi_1+\ker\chi_2$, and the decomposition is unique since $\ker\chi_1\cap \ker\chi_2=0$. 

Now, the equation $P^t\Psi_1=A^t\Psi A\theta_1$ tells us that $\theta_1$ is injective (and similarly $\theta_2$). Hence, $\ker\chi_1=\ker P$, and $\ker \chi_2=\ker Q$; in particular $X=\ker P\oplus \ker Q$. But now we are done: $P$ induces an isomorphism $X/\ker P\cong \ker Q\to Y_1$ and $Q$ induces an isomorphism $X/\ker Q\cong \ker P\to Y_2$. Hence $A$ is an isomorphism.
\end{proof}

\cleardoublepage
\addcontentsline{toc}{section}{Bibliography}

\bibliographystyle{alpha}

\end{document}